\let\oldFootnote\footnote
\newcommand\nextToken\relax
\renewcommand\footnote[1]{%
    \oldFootnote{#1}\futurelet\nextToken\isFootnote}
\newcommand\isFootnote{%
    \ifx\footnote\nextToken\textsuperscript{,}\fi}
\definecolor{darkblue}{rgb}{0.0, 0.0, 0.55}
\newtheorem{theorem}{Theorem}[section]
\newtheorem{cor}[theorem]{Corollary}
\newtheorem{lemma}[theorem]{Lemma}
\newtheorem{prop}[theorem]{Proposition}
\newtheorem{thm}[theorem]{Theorem}
\newtheorem{lem}[theorem]{Lemma}
\theoremstyle{definition}
\theoremstyle{remark}
\newtheorem{rem}[theorem]{Remark}
\def\bel{\begin{lemma}}
\def\eel{\end{lemma}}
\def\ben{\begin{enumerate}}
\def\een{\end{enumerate}}
\def\bem{\begin{pmatrix}}
\def\eem{\end{pmatrix}}
\def\bbm{\begin{bmatrix}}
\def\ebm{\end{bmatrix}}
\def\beq{\begin{equation}}
\def\eeq{\end{equation}}
\def\bep{\begin{proof}}
\def\eep{\end{proof}}
\DeclareMathOperator{\ran}{ran}
\def\ssec{\subsection}
\DeclareMathOperator{\real}{Re}
\DeclareMathOperator{\trc}{tr}
\DeclareMathOperator{\EndH}{B}
\newcommand{\irrL}{irreducible\xspace}
\newcommand{\irra}{atomic\xspace}
\newcommand{\irr}{atom\xspace}
\newcommand{\irrs}{atoms\xspace}
\newcommand{\C}{\mathbb{C}}
\newcommand{\R}{\mathbb{R}}
\def\tc{\tilde{c}}
\def\tv{\tilde{v}}
\def\tA{\widetilde{A}}
\def\tB{\widetilde{B}}
\def\tC{\widetilde{C}}
\def\tN{\tilde{N}}
\def\tL{\widetilde{L}}
\def\hL{\widehat{L}}
\def\vL{\widecheck{L}}
\def\cA{\mathcal{A}}
\def\cC{\mathcal{C}}
\def\cD{\mathcal{D}}
\def\cK{\mathcal{K}}
\def\cO{\mathcal{O}}
\def\cS{\mathcal{S}}
\def\cU{\mathcal{U}}
\def\cV{\mathcal{V}}
\def\cZ{\mathcal{Z}}
\def\trace{\operatorname{tr}}
\def\interior{\operatorname{int}}
\def\rr{\mathbbm{r}}
\def\ss{\mathbbm{s}}
\def\Span{\operatorname{span}}
\def\N{{\mathbb N}}
\newcommand{\oset}[3][0ex]{%
  \mathrel{\mathop{#3}\limits^{
    \vbox to#1{\kern-2\ex@
    \hbox{$\scriptstyle#2$}\vss}}}}
\DeclareMathOperator{\mydot}{\raisebox{1pt}{$\mathsmaller{\bigodot}$}}
\newcommand{\ulx}{x}
\newcommand{\Langle}{\mathop{<}\!}
\newcommand{\Rangle}{\!\mathop{>}}
\newcommand{\mx}{\!\Langle \ulx\Rangle}
\newcommand{\mxs}{\!\Langle \ulx^*\Rangle}
\newcommand{\mxx}{\!\Langle \ulx,\ulx^*\Rangle}
\newcommand{\px}{\C\!\Langle \ulx,\ulx^*\Rangle}
\newcommand{\pxa}{\C\!\Langle \ulx\Rangle}
\newcommand{\pxs}{\C\!\Langle \ulx^*\Rangle}
\newcommand{\pz}{\C\!\Langle z \Rangle}
\def\moverlay{\mathpalette\mov@rlay}
\def\mov@rlay#1#2{\leavevmode\vtop{
    \baselineskip\z@skip \lineskiplimit-\maxdimen
    \ialign{\hfil$#1##$\hfil\cr#2\crcr}}}
\newcommand{\plangle}{\moverlay{(\cr<}}
\newcommand{\prangle}{\moverlay{)\cr>}}
\newcommand{\rx}{\C\plangle x \prangle}
\newcommand{\rxx}{\C\plangle x,x^* \prangle}
\newcommand{\rz}{\C\plangle z \prangle}
\newcommand*{\mat}[1]{\operatorname{M}_{#1}(\C)}
\newcommand*{\matt}[2]{\operatorname{M}_{{#1}}(#2)}
\newcommand*{\zar}[1]{\overline{#1}^{\rm\,Zar}}
\def\de{\delta}
\def\ve{\varepsilon}
\def\cVh{\cV^{\rm h}}
\def\pL{L^{\prime}}
\def\bb{{\bf b}}
\def\tLx{\widetilde{L}_{\times}}
\def\hc{\widehat{c}}
\numberwithin{equation}{section}
\newcommand{\df}[1]{{\bf{#1}}{\index{#1}}}
\newcommand{\mycontentsbox}{%
\printindex
{\centerline{NOT FOR PUBLICATION}
\addtolength{\parskip}{-2.0pt}\footnotesize
\tableofcontents}}
\def\enddoc@text{\ifx\@empty\@translators \else\@settranslators\fi
\ifx\@empty\addresses \else\@setaddresses\fi
\newpage\mycontentsbox
}
\title{Noncommutative polynomials describing convex sets}
\author[J.W. Helton]{J. William Helton${}^1$}
\address{J. William Helton, Department of Mathematics\\
  University of California \\
  San Diego}
\email{helton@math.ucsd.edu}
\thanks{${}^1$Research supported by the NSF grant
DMS-1500835.}
\author[I. Klep]{Igor Klep${}^{2}$}
\address{Igor Klep, Faculty of Mathematics and Physics, 
University of Ljubljana, Slovenia}
\email{igor.klep@fmf.uni-lj.si}
\thanks{${}^2$Supported by the Slovenian Research Agency grants J1-8132, N1-0057 and P1-0222. Partially supported by the 
Marsden Fund Council of the Royal Society of New Zealand.}
\author[S. McCullough]{Scott McCullough${}^3$}
\address{Scott McCullough, Department of Mathematics,  University of Florida, Gainesville
   }
   \email{sam@math.ufl.edu}
\thanks{${}^3$Research supported by  NSF grants DMS-1361501 and DMS-1764231.}
\author[J. Vol\v{c}i\v{c}]{Jurij Vol\v{c}i\v{c}${}^4$}
\address{Jurij Vol\v{c}i\v{c}, Department of Mathematics\\
	Texas A\&M University \\ College Station}
\email{volcic@math.tamu.edu}
\thanks{${}^4$Research supported by the Deutsche Forschungsgemeinschaft (DFG) Grant No. SCHW 1723/1-1.}
\date{\today}  
\subjclass[2010]{13J30, 47A56, 52A05, (Primary);  14P10, 15A22, 16W10 (Secondary)}
\keywords{linear matrix inequality (LMI),
semialgebraic set, convex set, 
spectrahedron, 
noncommutative rational function,
free locus}
\begin{document}


\dottedcontents{section}[3.8em]{}{2.3em}{.4pc} 
\dottedcontents{subsection}[6.1em]{}{3.2em}{.4pc}
\dottedcontents{subsubsection}[8.4em]{}{4.1em}{.4pc}


\numberwithin{equation}{section}

\begin{abstract}
 The free closed semialgebraic set $\cD_f$ determined by a hermitian noncommutative
 polynomial $f\in \matt\de\px$ is 
the closure of the  connected component of $\{(X,X^*)\mid f(X,X^*)\succ0\}$ 
     containing the origin. When $L$ is a hermitian monic linear pencil,
 the free closed semialgebraic set $\cD_L$ is 
 the feasible set of the linear matrix inequality $L(X,X^*)\succeq 0$ and is known as
 a free spectrahedron. Evidently these are convex and 
 it is well-known that a free closed semialgebraic set 
   is convex if and only it is a free spectrahedron.
 The main result of this paper solves the basic problem of determining
 those $f$ for which $\cD_f$ is convex. The solution leads to
 an efficient algorithm that not only determines if $\cD_f$ is convex, but if so,
 produces a minimal hermitian monic pencil $L$ such that $\cD_f=\cD_L$.
 Of independent interest is a subalgorithm based on a Nichtsingul\"arstellensatz presented here:  given
  a linear pencil $\tL$ and a hermitian monic pencil $L$, it determines if 
$\tL$ takes invertible values on the interior of $\cD_L.$ 
  Finally, 
    it is shown that if $\cD_f$ is convex for
an irreducible hermitian
     $f\in\px$,
  then  $f$ has degree at most two, and arises as the
	Schur complement of an  $L$ such that $\cD_f=\cD_L$.
\end{abstract} 

\maketitle

\section{Introduction}

Semidefinite programming (SDP) \cite{Nem,WSV} is the main branch
of convex optimization to emerge in the last 25 years. 
Feasibility sets of semidefinite programs are given by linear matrix inequalities (LMIs)
and are called spectrahedra. 
We refer to the book \cite{BPR13} for an overview of the substantial theory of LMIs and spectrahedra and the connection to real algebraic geometry.
Spectrahedra are now basic objects in a number of  areas of mathematics. They figure prominently in determinantal representations \cite{Bra11, GK-VVW, NT12, PV, Vin93}, in the solution of the Kadison-Singer paving conjecture \cite{MSS15}, and the solution of the Lax conjecture \cite{HV07,LPR}.

One of the main applications of SDP lies in linear systems and control theory \cite{SIG97}.   From both empirical observation and the textbook classics, one sees that many problems in
this subject are described by signal flow diagrams and
naturally convert to inequalities involving polynomials in matrices. These polynomials
depend only upon the signal flow diagram and are otherwise independent of either the matrices
or their sizes. 
 Thus many problems in  systems and control  naturally lead to noncommutative polynomials, or more generally rational functions and matrix inequality conditions. This paper solves the basic problem of identifying those noncommutative
 rational matrix inequalities that give rise to convex feasibility sets. 
For expository purposes, the body of the paper presents a detailed proof of this fact for 
 noncommutative polynomials. The modifications needed to handle the
 more technically challenging case of matrix rational functions are 
 indicated in Appendix~\ref{sec:modify}.


The main results of the article are stated in this introduction. Following a review of basic definitions including that of a free spectrahedron and free semialgebraic set in Subsection \ref{sec:defs}, the three main results are presented in Subsection \ref{ssec:main}
followed by a guide to the paper in Subsection \ref{ssec:guide}.

\subsection{Definitions}
\label{sec:defs}
Let $x=(x_1,\dots,x_g)$ denote  freely noncommuting variables
and $x^*=(x_1^*,\dots,x_g^*)$ their formal adjoints.
Let $\mxx$\index{$\mxx$} denote the set of words in $x$ and $x^*$
and  $\px$\index{$\px$} the free polynomials in $(x,x^*)$
 equal  the finite $\C$-linear combinations from $\mxx$. 
For a positive integer $\de$, the set of free polynomials
with coefficients in $\mat{\de}$ 
is denoted  $\matt{\de}{\px}$\index{$\matt\de\px$} and is naturally identified
with the tensor product  $\mat{\de}\otimes \px$. 
  The ring $\px$ has a natural involution ${}^*$ 
 determined by sending the variables
$x_j$ to $x_j^*$, and vice-versa, sending scalars to their
complex conjugates and $(fg)^* = g^* f^*$ for $f,g\in \px$.
An element $f\in \px$ is hermitian if $f=f^*.$ This involution, and the notion of a
 hermitian polynomial,  naturally extends to $\matt{\de}{\px}$.

An element $f\in \matt{\de}{\px}$
is a finite sum
\beq\label{eq:mtxPoly}
f=\sum_{w\in\mxx}f_w w\in\mat{\de} \otimes \px
 = \matt{\de}{\px},
\eeq
where $f_w\in \mat{\de}$.
Given a  $g$-tuple  $X=(X_1,\dots,X_g)\in \mat{n}^g$,  a word $w\in \mxx$ is 
evaluated at $(X,X^*)$ in the natural way, resulting in an $n\times n$ matrix  $w(X,X^*)$.
The polynomial $f$ of equation \eqref{eq:mtxPoly}
is then evaluated at $X$ as
\[
f(X,X^*)=\sum_{w\in\mxx} f_w \otimes w(X,X^*) \in \mat{\de}\otimes \mat{n} = \mat{n\de}.
\]
It is a standard fact
that $f$ is hermitian if and only if $f^*(X,X^*) =f(X,X^*)^*$ for each $n$ 
and $X\in \mat{n}^g$.

Affine linear polynomials play a special role.
  A \df{monic (linear) pencil} of size $\de$ is an
  element $L$ of $\matt{\de}{\px}$  of the form
\begin{equation}
\label{eq:pencil}
 L(x,x^*) 
 = I_\de - A\mydot x - B\mydot x^*=
   I_\de - \sum_{j=1}^g  A_j x_j  - \sum_{j=1}^g B_j x_j^*.
\end{equation}
In the  case $B=A^*,$ the pencil $L$ is  a \df{hermitian  monic (linear) pencil}. 
The associated spectrahedron
\[
\cD_L(n) =\{(X,X^*)\in \mat{n}^{2g}: L(X,X^*)\succeq 0\}\footnote{For a square matrix $T$, the notation $T\succeq 0$ indicates that $T$ is positive semidefinite.}
\]
is a convex semialgebraic set and is the closure of the connected 
set $\{(X,X^*)\in\mat{n}^{2g} : L(X,X^*)\succ 0\}$.
The union, over $n$, of  $\cD_L(n)$ is a \df{free spectrahedron}, denoted $\cD_L$.
\index{$\cD_L$}

Given $f\in \matt{\de}{\px}$ with $\det f(0)\neq0$ and a positive integer $n$, let
$\cK_f(n)$ denote the closure of the connected component of $0$ of 
\[
 \{(X,X^*)\in \mat{n}^{2g}: \det f(X,X^*)\neq0\}.
\]
The \df{free invertibility set} $\cK_f$ associated to $f$ is then the union,
over $n$, of the $\cK_f(n)$. By replacing $f$ by $ f(0)^{-1} f$ we may, and usually do, assume that $f(0)=I$.
 A free invertibility set $\cK_f$ is \df{convex} if  \index{$\cK_f$}
each $\cK_f(n)$ is. 
If $f=f^*$ is hermitian, then
$\cK_f$ is a \df{free semialgebraic set} that we denote here by  $\cD_f$. \index{$\cD_f$}
(Letting $g=f^*f$, we see that
$g$ is hermitian with $g(0)=I$, 
 and $\cK_f = \cK_g =\cD_g$.)
In particular, if $L$ is a hermitian monic pencil, then
 $\cD_L$ is a convex free semialgebraic set.
Questions surrounding convexity of free semialgebraic sets arise in applications
such as systems engineering and are natural from the point of view of 
the theories of completely positive maps, operator systems and matrix convex sets \cite{Pau,EW97}, and quantum information theory \cite{HKMjems,BN}.
It is known, \cite{HM12,Kri}, that $\cK_f$ is convex if and only if there is an
 hermitian monic pencil  $L$ such that $\cK_f=\cD_L$.

\subsection{Main results}\label{ssec:main}
 We are now ready to  exposit our main
 results. 
 Using the theory of realizations for
noncommutative rational functions \cite{BGM05,BR,GGRW05,KVV09,Vol17}, in Theorem \ref{thm:main2} we
  explicitly and constructively describe the  structure of noncommutative matrix polynomials $f$
whose invertibility set $\cK_f$
is convex. 
Each  $\de\times\de$ noncommutative polynomial
or noncommutative rational function 
$r$ with $r(0)=I$  has  a 
\df{noncommutative Fornasini-Marchesini (FM) realization}. Namely,
 there exists a positive integer $d$ (the size of the realization), a monic linear
pencil with coefficients from $\mat{d}$, and 
$c,b_1,\dots, b_{2g} \in \mat{d\times \de}$ such that
\begin{equation}
\label{e:fm}
r(x,x^*) = I_\de + c^* L(x,x^*)^{-1} \bb,
\end{equation}
where $\bb:=\sum_{j=1}^g  (b_j x_j + b_{g+j} x_j^*)$.  
A  $d\times d$ linear pencil $L$
as in \eqref{eq:pencil} is \df{\irrL} if $A_1,\dots,A_g,B_1,\ldots,B_g$ generate $\mat{d}$ as a $\C$-algebra.
For non-constant $r$,
the FM realization  \eqref{e:fm} is \df{minimal} if $L$ has minimal size
amongst all FM realizations of $r$.\footnote{It is convenient
 to declare the size of a minimal realization for constant $r$
 to be $0$.} Since any two minimal realizations are
equivalent up to change of basis (see also 
 Remark \ref{r:facts}  for details),
Theorem~\ref{thm:main2} below
 does not depend upon the choice of minimal realization.

\begin{thm}\label{thm:main2}
Let $f\in\matt{\de}{\px}$ with $f(0)=I$.
Let $f^{-1}=I+c^* L^{-1}\bb$ be
a minimal  FM realization.
After a basis change we can assume that
\begin{equation}\label{e:triang}
L=
\begin{pmatrix}
L^1 & \star & \star \\
& \ddots &\star \\
& & L^\ell
\end{pmatrix},
\end{equation}
with each $L^i$  either \irrL or an identity matrix.

Let $\hL$ be the direct sum of those \irrL blocks $L^i$ of $L$ that are similar to a hermitian monic pencil, and let $\vL$
be the direct sum of the remaining $L^j$.
Then the following are equivalent:
\begin{enumerate}[label={\rm(\roman*)}]
\item \label{it:convex} $\cK_f$ is convex; 
\item \label{it:spec}
$\cK_f$ is a free spectrahedron;
\item\label{it:midInv}
$\cK_f=\cK_{\hL}$;
\item\label{it:lastInv}
$\vL$ is invertible on the interior
of $\cK_{\hL}$.
\end{enumerate}
\end{thm}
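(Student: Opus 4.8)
The plan is to prove the chain (iv)$\Rightarrow$(iii)$\Rightarrow$(ii)$\Rightarrow$(i) directly and then to close the loop with the single hard implication (i)$\Rightarrow$(iv); the remaining equivalences are then automatic. Throughout, write $\cZ(N)=\{(X,X^*)\colon\det N(X,X^*)=0\}$ for the free locus of a pencil or polynomial $N$. Two reductions come first. Since the FM realization $f^{-1}=I+c^*L^{-1}\bb$ is minimal, a standard property of minimal realizations (see \cite{Vol17}) identifies the singularity locus of the rational function $f^{-1}$ with $\cZ(L)$; and since $f$ is a polynomial, that same locus is $\cZ(f)$. Hence $\cZ(f)=\cZ(L)$, and as $\cK_f$ and $\cK_L$ are by definition the closures of the connected components of $0$ in the complement of this one locus, $\cK_f=\cK_L$. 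Because $L$ is block upper triangular, $\det L=\prod_i\det L^i$, so $\cZ(L)=\cZ(\hL)\cup\cZ(\vL)$ (identity blocks contribute nothing), whence $\cK_L\subseteq\cK_{\hL}$; moreover, if $\hL'$ denotes the hermitian monic pencil obtained by replacing each block of $\hL$ by a similar hermitian monic pencil, then similarity preserves $\cZ$, so $\cK_{\hL}=\cK_{\hL'}=\cD_{\hL'}$ is a free spectrahedron whose interior contains the convex connected set $\{\hL'\succ0\}\ni0$, which is dense in $\cD_{\hL'}$ and disjoint from $\cZ(\hL)$. Recall also that (i)$\Leftrightarrow$(ii) is \cite{HM12,Kri}.

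For the constructive cycle: if $\vL$ is invertible on $\interior\cK_{\hL}$, then $\{\hL'\succ0\}$ is also disjoint from $\cZ(\vL)$, hence from $\cZ(L)$; being connected and containing $0$, it lies in the connected component of $0$ of $\{L\text{ invertible}\}$, so its closure $\cD_{\hL'}=\cK_{\hL}$ is contained in $\cK_L$, giving $\cK_f=\cK_L=\cK_{\hL}$ — this is (iv)$\Rightarrow$(iii). Then (iii)$\Rightarrow$(ii) because $\cK_{\hL}=\cD_{\hL'}$ is a free spectrahedron, and (ii)$\Rightarrow$(i) is trivial.

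It remains to prove (i)$\Rightarrow$(iv), which is the crux. By (i)$\Leftrightarrow$(ii), write $\cK_f=\cD_M$ for a hermitian monic pencil $M$, which we take to be LMI-minimal, so that $\interior\cD_M=\{M\succ0\}$ and the Zariski closure of $\partial\cD_M$ is $\cZ(M)$, whose irreducible pieces are the free loci of the irreducible hermitian monic summands of $M$. Suppose, toward a contradiction, that $\vL$ is singular somewhere on $\interior\cK_{\hL}$; that singularity comes neither from an identity block nor, by the reductions, from a summand of $\hL$, so it comes from an irreducible block $L^j$ that is \emph{not} similar to a hermitian monic pencil, with $\cZ(L^j)\cap\interior\cK_{\hL}\neq\emptyset$. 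I would then play the two descriptions of the boundary against each other: on one hand $\partial\cK_L=\partial\cD_M\subseteq\cZ(M)$ decomposes into hermitian irreducible free loci; on the other hand $\partial\cK_L\subseteq\cZ(L)=\cZ(\hL)\cup\cZ(\vL)$, and minimality of the FM realization makes every block essential, so $\cZ(L^j)$ is a full irreducible component of $\cZ(L)$. Using that for irreducible pencils ``having the same free locus as a hermitian monic pencil'' is equivalent to ``being similar to one'', $\cZ(L^j)$ is the free locus of no hermitian monic pencil, hence is not among the irreducible pieces of $\cZ(M)$ and cannot lie in $\partial\cD_M$; therefore it must poke into the open set $\{M\succ0\}$. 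The contradiction should then be extracted by the Nichtsingul\"arstellensatz-type analysis advertised in the abstract: a non-hermitian irreducible free locus meeting $\{M\succ0\}$ is incompatible with $\cK_L=\cD_M$, because it would force either a boundary component of the ``wrong'' (non-hermitian) type or a failure of $\cK_L$ to be the closure of the connected component of $0$ of $\{L\text{ invertible}\}$. I expect exactly this last point — ruling out that a non-hermitian-izable block's free locus can be harmlessly buried inside $\interior\cK_f$ — to be the main obstacle: it is where the fine structure theory of free loci (irreducibility, the pencil-versus-locus dictionary, and the reading of hermitian-izability off the locus) must be combined with minimality of the realization, and it is precisely the situation the paper's Nichtsingul\"arstellensatz subalgorithm is designed to certify.
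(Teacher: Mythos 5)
Your chain (iv)$\Rightarrow$(iii)$\Rightarrow$(ii)$\Rightarrow$(i) is sound and matches the paper's easy reductions: the identification $\cK_f=\cK_L$ via the minimal realization (Remark~\ref{r:facts}\ref{it:rf3}), the factorization $\det L=\prod_i\det L^i$ and hence $\cK_L\subseteq\cK_{\hL}$, and the connectedness argument that $\{\hL'\succ 0\}$ avoids $\cZ(L)$ once (iv) holds and therefore sits inside the component of $0$ are all correct.

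The hard implication is where the proposal breaks down, and you essentially concede this yourself. Two concrete problems. First, the claim that the problematic free locus $\cZ(L^j)$ ``must poke into the open set $\{M\succ 0\}$'' does not follow from what precedes it: the point $p\in\cZ(L^j)\cap\interior\cK_{\hL}$ witnessing that (iv) fails need not lie in $\interior\cD_M=\{M\succ 0\}$, because $\cD_M=\cK_L$ is only known to be \emph{contained} in $\cK_{\hL}$, not equal to it --- equality is exactly what you are trying to prove. So the dichotomy ``either $\cZ(L^j)\subseteq\partial\cD_M$ or $\cZ(L^j)$ hits $\{M\succ 0\}$'' is false; $\cZ(L^j)$ could lie outside $\cD_M$ entirely. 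Second, you propose to close the argument with ``the Nichtsingul\"arstellensatz-type analysis advertised in the abstract''; that is the wrong tool. Theorem~\ref{thm:rank} is used only to make item~\ref{it:lastInv} algorithmically checkable (Corollary~\ref{cor:algo12}\ref{it:algo1}); it plays no role in proving Theorem~\ref{thm:main2} itself. The paper's proof of the hard implication --- stated there as (ii)$\Rightarrow$(iii) --- is self-contained: take the minimal hermitian monic pencil $\tL$ with $\cK_L=\cD_{\tL}$, use Proposition~\ref{p:pd-min} to identify $\zar{\partial\cK_L(n)}$ with $\cZ_{\tL}(n)$ and include it in $\cZ_L(n)$, match irreducible components (each $\cZ_{L^i}(n)$ is an irreducible hypersurface for large $n$ by Lemma~\ref{l:irr}, and the irreducible summands $\tL^k$ of $\tL$ give the components of $\cZ_{\tL}$), invoke {\rm[KV, Theorem 3.11]} to conclude $L^{i_k}$ is similar to $\tL^k$, and then compute $\cK_L=\bigcap_k\cK_{L^{i_k}}=\cD_{\tL^1\oplus\cdots\oplus\tL^s}$. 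You gesture at several of these ingredients --- the Zariski-closure comparison, irreducibility of $\cZ_{L^i}$, the pencil/locus dictionary for hermitizability --- but never assemble them into the actual equality $\cK_L=\cK_{\hL}$, and the final ``poke into $\{M\succ 0\}$'' step is a genuine logical gap rather than a loose end.
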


\begin{proof}
 If $\cK_f$ is convex, then
it is a free spectrahedron (by \cite{HM12}). Hence \ref{it:convex} implies \ref{it:spec}.
The converse is immediate.  The equivalence of items \ref{it:midInv} and \ref{it:lastInv}
 is straightforward.  Evidently item \ref{it:midInv} implies \ref{it:spec}.
The converse is proved in  Section \ref{ssec:pf12}. 
\end{proof}

Theorem \ref{thm:main2} 
implies that, for a monic linear pencil $L$, the invertibility set
$\cK_L$ is convex
if and only if
the semisimple part of a minimal size pencil $L$ describing $\cK_L$ is 
similar to a hermitian pencil.

  A non-invertible element $f\in\matt{\de}{\px}$ with $\det f(0)\neq0$ is an \df{atom} (\cite[Section 3.2]{Coh06}) if it does not factor; that is, it can not  be written as $f_1f_2$ for non-invertible
 $f_j\in \matt{\de}{\px}$. 
Given $f_j\in\matt{\de_j}{\px}$ for $1\le j\le t$, the intersection $\cK:=\bigcap_j\cK_{f_j}$ is
 \df{irredundant} if $\cK_{f_j}\not\subseteq \bigcap_{k\neq j}\cK_{f_k}$ for all $j$.
Theorem \ref{thm:main2} yields the following striking result providing further evidence of 
the rigid nature of convexity for free semialgebraic sets.

\begin{cor}\label{cor:sacvx}
Suppose $f_j\in\matt{\de_j\times\de_j}{\px}$ 
are atoms with $f_j(0)=I$. If $\cK:=\bigcap_j \cK_{f_j} $
is irredundant,  then $\cK$ is convex if and only if each $\cK_{f_j}$ is convex.
\end{cor}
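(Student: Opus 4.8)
The ``$\Leftarrow$'' implication needs no hypothesis at all: at every level $n$, the set $\cK(n)=\bigcap_j\cK_{f_j}(n)$ is an intersection of convex sets, hence convex. So the plan is to prove the converse, and the idea is to bundle the atoms into a single polynomial and feed it to Theorem~\ref{thm:main2}.

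First I would put $F:=\bigoplus_j f_j\in\matt{\delta}{\px}$ with $\delta:=\sum_j\delta_j$, so that $F(0)=I$. Since $\det F=\prod_j\det f_j$, the free invertibility set of $F$ is $\cK_F=\bigcap_j\cK_{f_j}=\cK$, which is convex by assumption. Next I would take minimal FM realizations $f_j^{-1}=I+c_j^*L_j^{-1}\bb_j$ and glue them: $F^{-1}=\bigoplus_j f_j^{-1}=I+c^*L_F^{-1}\bb$ with $L_F:=\bigoplus_j L_j$ and $c,\bb$ the evident direct sums. A minimal realization of a direct sum is the direct sum of the minimal realizations, so $L_F$ is again minimal and is therefore the pencil that Theorem~\ref{thm:main2} attaches to $F$. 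Because $f_j$ is an \irr (hence not a unit, in particular non-constant), its minimal pencil $L_j$ is \irrL rather than an identity block, so the block decomposition \eqref{e:triang} of $L_F$ is literally $L_F=\bigoplus_j L_j$ with every block \irrL. Consequently, writing $S:=\{j:L_j\text{ is similar to a hermitian monic pencil}\}$, the pencils $\hL$ and $\vL$ produced by Theorem~\ref{thm:main2} for $F$ are, up to reordering, $\hL=\bigoplus_{j\in S}L_j$ and $\vL=\bigoplus_{j\notin S}L_j$.

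Now I would invoke Theorem~\ref{thm:main2}: since $\cK_F$ is convex, item~\ref{it:midInv} gives
\[
\cK=\cK_F=\cK_{\hL}=\cK_{\bigoplus_{j\in S}L_j}=\bigcap_{j\in S}\cK_{L_j}=\bigcap_{j\in S}\cK_{f_j},
\]
where $\cK_{f_j}=\cK_{L_j}$ because $L_j$ is the pencil of a minimal realization of $f_j^{-1}$ (equal free loci). Hence $\bigcap_{j=1}^t\cK_{f_j}=\bigcap_{j\in S}\cK_{f_j}$. If there were some $j_0\notin S$, then $S\subseteq\{1,\dots,t\}\setminus\{j_0\}$ would give
\[
\bigcap_{j\neq j_0}\cK_{f_j}\ \subseteq\ \bigcap_{j\in S}\cK_{f_j}\ =\ \bigcap_{j=1}^t\cK_{f_j}\ \subseteq\ \cK_{f_{j_0}},
\]
contradicting irredundancy. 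Therefore $S=\{1,\dots,t\}$, i.e.\ each $L_j$ is similar to a hermitian monic pencil $L_j'$, and so $\cK_{f_j}=\cK_{L_j}=\cD_{L_j'}$ is a free spectrahedron; in particular it is convex.

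Apart from the trivial direction, the content not already inside Theorem~\ref{thm:main2} is the realization bookkeeping of the second paragraph, and that is where I expect the main obstacle: one must establish (i)~$\cK_F=\bigcap_j\cK_{f_j}$; (ii)~that the direct sum of minimal FM realizations of the $f_j^{-1}$ is a minimal FM realization of $F^{-1}$, so that $\hL$ and $\vL$ split along the atoms; and (iii)~the atom/irreducible-pencil dictionary, namely that an \irr has an \irrL minimal realization pencil, so that each $L_j$ occupies a single block of \eqref{e:triang}. These are exactly the kind of statements supplied by the realization theory and by the factorization theory of \cite{Coh06}; granting them, Theorem~\ref{thm:main2} does the real work and the irredundancy hypothesis enters only to forbid $S\subsetneq\{1,\dots,t\}$.
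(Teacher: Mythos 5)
Your approach is close to the paper's own argument---both reduce the corollary to Theorem~\ref{thm:main2} applied to a single polynomial built out of the $f_j$---but you use the direct sum $F=\bigoplus_j f_j$ where the paper uses the product $f=\prod_i f_i$. The direct-sum route is arguably cleaner bookkeeping: $\det F=\prod_j\det f_j$ identically, and the direct sum of minimal FM realizations of the $f_j^{-1}$ is readily verified to be minimal by checking controllability and observability blockwise. It also works verbatim for unequal sizes $\delta_j$, whereas $\prod_i f_i$ only makes sense when all $\delta_j$ coincide. The reverse direction, the identification $\cK_F=\cK$, and the final contradiction with irredundancy are all fine as you wrote them (though it is worth noting that the inclusion $\bigcap_j\cK_{f_j}\subseteq\cK_F$ is not purely determinantal: it uses convexity of $\cK$, which you do have available).

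There is one genuine gap, in your claim that each $L_j$ is \irrL because $f_j$ is an atom. That statement invokes Proposition~\ref{p:FM}\ref{it:indec}, but that item is proved \emph{only for} $\delta=1$; for $\delta_j>1$ the minimal pencil $L_j$ of a matrix atom need not be \irrL. What is true, and is all you actually need, follows from Proposition~\ref{p:FM}\ref{FM1}, Proposition~\ref{p:irrpoly} and Lemma~\ref{l:irr}: since $\det L_j(\Omega^{(n)})=\det f_j(\Omega^{(n)})$ is irreducible for large $n$, the triangular decomposition \eqref{e:triang} of $L_j$ has exactly one non-identity \irrL block, say $L_j^{\rm main}$, and $\cZ_{L_j^{\rm main}}=\cZ_{L_j}=\cZ_{f_j}$, hence $\cK_{L_j^{\rm main}}=\cK_{f_j}$. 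Running your argument with $L_j^{\rm main}$ in place of $L_j$---the \eqref{e:triang}-blocks of $\bigoplus_j L_j$ are then the $L_j^{\rm main}$ together with some identity blocks, so $\hL=\bigoplus_{j\in S}L_j^{\rm main}$ with $S$ defined as before---repairs the step and gives the same conclusion $\cK=\cK_{\hL}=\bigcap_{j\in S}\cK_{f_j}$, after which irredundancy forces $S=\{1,\dots,t\}$ exactly as you argued.
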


\begin{proof} See  Subsection \ref{ssec:pf12}. \end{proof}

Theorem \ref{thm:main2}
 leads to algorithms
based on semidefinite programming. 
Note that 
Part \ref{it:algo2} of Corollary \ref{cor:algo12} below
asserts the existence of 
an effective version of the main
result of \cite{HM12}.

\begin{cor}\label{cor:algo12}
Let $f\in\matt{\de}{\px}$ with $\det f(0)\neq0$ be given. There is an efficient deterministic algorithm based on linear algebra and
semidefinite programming (SDP) to:
\begin{enumerate}[label={\rm(\arabic*)}]
	\item\label{it:algo1}
	check whether
	$\cK_f$ is convex;
	\item\label{it:algo2}
    (in the case $\cK_f$ is convex)	compute
	a linear matrix inequality (LMI) representation for 
	$\cK_f$; that is, a hermitian monic pencil 
	$L$ (of minimal size) with $\cK_f=\cD_{L}$.
\end{enumerate}
\end{cor}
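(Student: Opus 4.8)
\emph{Plan.} The algorithm is built around the chain of equivalences in Theorem~\ref{thm:main2}; replacing $f$ by $f(0)^{-1}f$ we assume $f(0)=I$. \emph{Phase 1} is purely linear-algebraic and produces the data $\hL,\vL$ of Theorem~\ref{thm:main2}. From the coefficients of $f$ one first builds, by the standard linearization (Higman trick) for matrices of free polynomials, a Fornasini--Marchesini realization $f^{-1}=I+c^*L^{-1}\bb$, and then minimizes it by the usual controllability/observability reduction; this is rational arithmetic on matrices of size polynomial in $\de$, $g$ and $\deg f$. Next, compute a basis of $\C^d$ adapted to a composition series of the module defined by the coefficients of $L$: computing the radical of the algebra they generate together with a composition series is polynomial-time linear algebra over $\C$ and yields the block upper-triangular form \eqref{e:triang} with each diagonal block $L^i$ \irrL or an identity. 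For each \irrL block $L^i=I-A\mydot x-B\mydot x^*$ decide whether it is similar to a hermitian monic pencil: this holds precisely when the linear system $PB_j=A_j^*P$ $(j=1,\dots,g)$ in the hermitian unknown $P$ has a positive definite solution, which by a Schur's-lemma argument is settled by solving a linear system and then testing whether its (at most one real parameter, up to scaling) hermitian solution space meets the positive definite cone; a positive $P=S^*S$ also records the conjugation $S$. Conjugating the blocks that pass the test to hermitian form, collect them into a hermitian monic pencil $\hL$ and the remaining $L^j$ into $\vL$.

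\emph{Phase 2 (the SDP step, giving part \ref{it:algo1}).} By Theorem~\ref{thm:main2}, $\cK_f$ is convex if and only if $\vL$ is invertible on $\interior\cK_{\hL}=\interior\cD_{\hL}$. Feed the pair $(\tL,L)=(\vL,\hL)$ to the Nichtsingul\"arstellensatz subalgorithm promised in the introduction: it reduces the question ``does $\vL$ take invertible values on $\interior\cD_{\hL}$?'' to a semidefinite feasibility problem of size polynomial in the sizes of $\vL$ and $\hL$. If the answer is negative, $\cK_f$ is not convex and we halt.

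\emph{Phase 3 (minimization, giving part \ref{it:algo2}).} If the answer is positive then, by Theorem~\ref{thm:main2}, $\cK_f=\cK_{\hL}=\cD_{\hL}$, so $\hL$ is already an LMI representation of $\cK_f$. To obtain a \emph{minimal} one, run the reduction of hermitian monic pencils: iteratively discard an irreducible summand whose spectrahedron contains the intersection of the remaining summands (each such inclusion test is, again by the Nichtsingul\"arstellensatz / LMI-domination machinery, an SDP), and finish with a linear-algebra compression removing any common kernel direction of the surviving pencil. The result is a minimal-size hermitian monic $L$ with $\cK_f=\cD_L$. All steps have size polynomial in $\de$, $g$ and $\deg f$, so the procedure is efficient.

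\emph{Main obstacle.} The real content lies in Phase~2: one must isolate an algebraic certificate characterizing when a linear pencil is invertible on the interior of a free spectrahedron and show it is detected by a semidefinite program of manageable size; this is the new tool developed in the body of the paper, and the rest is bookkeeping around Theorem~\ref{thm:main2}. A secondary delicate point is the hermitian-similarity test for the irreducible blocks, which relies on the correct $*$-representation/Schur's-lemma input to be both correct and cheap; and in Phase~3 one must make sure the domination tests used for irredundancy reduce to the same SDP primitive.
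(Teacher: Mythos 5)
Your proposal is correct and follows essentially the same route as the paper's algorithms in Subsections~\ref{ssec:algo} and~\ref{ssec:algo2}: build and minimize an FM realization of $f^{-1}$, Burnside-decompose the pencil into irreducible blocks, sort them into $\hL$ and $\vL$, decide convexity by applying the Nichtsingul\"arstellensatz (Theorem~\ref{thm:rank}) to the pair $(\vL,\hL)$, and extract a minimal LMI representation from $\hL$ by SDP-based redundancy elimination. The only minor variation is your (correct, Schur's-lemma) observation that the hermitian-similarity test $PB_j=A_j^*P$ for an irreducible block has an at-most-one-real-parameter hermitian solution space and can thus be settled by linear algebra plus a definiteness check, whereas the paper phrases this step directly as the SDP feasibility problem \eqref{e:sdp}.
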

An  SDP can be solved up to a given
  arbitrary precision in polynomial time \cite[Section 6.4]{NN}.
Thus in practice our algorithm
runs in polynomial time.
The proof of \ref{it:algo2} is based on
Theorem \ref{thm:main2} (see Subsection \ref{ssec:algo}), while the proof
of \ref{it:algo1} in Subsection \ref{ssec:algo2} uses
\ref{it:algo2} and new, of independent interest,  (recursive) certificates
for invertibility of linear pencils on interiors of free spectrahedra.

\begin{thm}[Nichtsingul\"arstellensatz]\label{thm:rank}\index{Nichtsingul\"arstellensatz}
Let $L$ be a hermitian monic pencil,
and let  $\tL$ be a not necessarily square affine linear matrix polynomial.
Consider the set of all matrices $D,C_k,P_0$ 
such that
$P_0\succeq0$ and
\begin{equation}\label{e:45intro}
D\tL+\tL^*D^*=P_0 + \sum_k C_k^* L C_k.
\end{equation}
(Such certificates can be searched for using semidefinite programming.)
\begin{enumerate}[label={\rm(\arabic*)}]
\item
If the only solutions of \eqref{e:45intro} have $P_0=0=C_k$, 
 then for some
$(X,X^*)$ in the interior of $\cD_L$, the matrix $\tL(X,X^*)$ is rank deficient;
\item
Otherwise let
$V=\ker P_0 \cap \bigcap_k \ker C_k$. 
\begin{enumerate}[label={\rm(\alph*)}]
\item
If $V=\{0\}$, then $\tL$ is full rank on
$\interior \cD_L$.
\item
If $V\neq\{0\}$, then $\tL$ is full rank on
$\interior \cD_L$ 
if and only if $\tL|_V$ is full rank on 
$\interior \cD_L$ and the theorem now applies with $\tL$ replaced by the smaller
pencil  $\tL|_V$.\looseness=-1
\end{enumerate}
\end{enumerate}
\end{thm}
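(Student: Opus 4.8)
My plan is to prove part~(1) via its contrapositive and parts~(2)(a)--(b) directly; the latter is the soft direction, and part~(1) carries the weight. Throughout, write $\tL$ as a $p\times q$ affine linear matrix polynomial and let $d$ be the size of $L$.

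\smallskip\noindent\textbf{The soft direction (parts (2)(a), (2)(b), termination).} Given $D,C_k,P_0$ satisfying \eqref{e:45intro} with $(P_0,C_k)\neq 0$, set $V=\ker P_0\cap\bigcap_k\ker C_k$. Fix $n$, a point $X\in\interior\cD_L(n)$, and $\zeta\in\ker\tL(X,X^*)\subseteq\C^{qn}$. I would evaluate \eqref{e:45intro} at $X$ and compress by $\zeta$: the left side vanishes because $\tL(X,X^*)\zeta=0$ and $\tL^*$ evaluates to $\tL(X,X^*)^*$, while the right side is $\zeta^*(P_0\otimes I_n)\zeta+\sum_k\|L(X,X^*)^{1/2}(C_k\otimes I_n)\zeta\|^2$, a sum of nonnegative terms (using $L(X,X^*)\succ 0$ at an interior point). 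Hence each term is $0$, giving $(P_0\otimes I_n)\zeta=0$ and $(C_k\otimes I_n)\zeta=0$, i.e.\ $\zeta\in V\otimes\C^n$. So $\ker\tL(X,X^*)\subseteq V\otimes\C^n$ at every interior point, whence $\tL$ is full rank on $\interior\cD_L$ exactly when $\tL|_V$ is; this is (2)(a) (case $V=\{0\}$) and the equivalence in (2)(b). Since $(P_0,C_k)\neq 0$ forces $V\subsetneq\C^q$, the recursion in (2)(b) strictly lowers the column count and terminates.

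\smallskip\noindent\textbf{Part~(1): the cone and a separating functional.} I prove the contrapositive: if $\tL$ is full rank on $\interior\cD_L$, then \eqref{e:45intro} has a solution with $(P_0,C_k)\neq 0$. Work in the finite-dimensional real space $\cS$ of hermitian $q\times q$ affine linear pencils and set
$$
\cC=\Big\{P_0+\sum_k C_k^*LC_k:\ P_0\succeq 0,\ C_k\in\mat{d\times q}\Big\},\qquad
\cL=\{D\tL+\tL^*D^*:\ D\in\mat{q\times p}\}.
$$
Then ``\eqref{e:45intro} has only the solution $P_0=0=C_k$'' says precisely ``$\cL\cap\cC=\{0\}$''. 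I would check that $\cC$ is \emph{pointed} (if $M,-M\in\cC$, comparing constant terms---that of $C^*LC$ being $C^*C$ since $L$ is monic---gives $P_0+\sum C_k^*C_k=0$ for each, so $M=0$) and \emph{closed} (by Carath\'eodory $\dim\cS$ summands suffice, and if $P_0+\sum_kC_k^*LC_k$ stays bounded then so does its constant term $P_0+\sum_kC_k^*C_k$, hence all $P_0,C_k$ stay bounded, so $\cC$ meets each ball in a compact set). A closed pointed cone carries a strictly positive functional with compact base, and strictly separating that base from the closed subspace $\cL$---which it misses because $\cL\cap\cC=\{0\}$---yields $\varphi\in\cS^*$ with $\varphi|_\cL=0$ and $\varphi>0$ on $\cC\setminus\{0\}$.

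\smallskip\noindent\textbf{Part~(1): realizing $\varphi$ and concluding.} Extend $\varphi$ complex-linearly. Because $\varphi$ is strictly positive on nonzero positive semidefinite constants, the matrix $\mu=(\varphi(E_{ab}))_{a,b}$ of constant moments (the $E_{ab}$ being matrix units) is positive \emph{definite}; I write $\mu=V^*V$ with $V\in\mat q$ invertible, take $v_1,\dots,v_q$ to be the columns of $V$ (a basis of $\C^q$) and $v=\sum_a e_a\otimes v_a\in\C^{q^2}\setminus\{0\}$, and define $X_j\in\mat q$ by $\langle X_jv_b,v_a\rangle=\varphi(E_{ab}x_j)$. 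Checking on the spanning set $E_{ab},E_{ab}x_j,E_{ab}x_j^*$ gives $\varphi(M)=\langle M(X,X^*)v,v\rangle$ for all $M\in\cS$. Since the $v_a$ span $\C^q$, the vectors $(C\otimes I_q)v$ run over all of $\C^{dq}$ as $C$ runs over $\mat{d\times q}$, so $\varphi(C^*LC)=\langle L(X,X^*)(C\otimes I_q)v,(C\otimes I_q)v\rangle\geq 0$ forces $L(X,X^*)\succeq 0$; and were $L(X,X^*)$ singular, choosing $C\neq 0$ with $(C\otimes I_q)v$ in its kernel would make $\varphi(C^*LC)=0$ while $C^*LC\in\cC\setminus\{0\}$, contradicting strict positivity. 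Hence $L(X,X^*)\succ 0$, i.e.\ $X\in\interior\cD_L(q)$. Likewise $(D^*\otimes I_q)v$ runs over $\C^{pq}$ as $D$ runs over $\mat{q\times p}$, and $0=\varphi(D\tL+\tL^*D^*)=2\real\langle\tL(X,X^*)v,(D^*\otimes I_q)v\rangle$ (applied to $D$ and to $iD$) forces $\tL(X,X^*)v=0$. With $v\neq 0$ this exhibits $\tL(X,X^*)$ as rank deficient at an interior point, contradicting the hypothesis, so \eqref{e:45intro} must have a nontrivial solution. The one genuinely delicate step is the closedness of $\cC$: cones of the shape ``hermitian-square part plus $L$-conjugate part'' are notoriously not closed, and closedness here---exactly what lets the separation deliver a \emph{strictly} positive functional, hence makes $\mu$ positive definite and the realization above immediate---rests on $L$ being monic (so the constant term $C^*C$ controls $C$) and on the certificate matrices $C_k$ being constant; everything else (the compression identity, the ``Gram matrix $=$ moment matrix'' realization) is bookkeeping.
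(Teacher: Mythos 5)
Your proof is correct, and it takes a genuinely different and cleaner route than the paper's in the hard direction (part~(1)).

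Your soft direction (parts~(2)(a), (2)(b), termination) matches the paper's Lemma~\ref{l:yes} and the recursion in Corollary~\ref{c:rank} in both substance and mechanism: compress \eqref{e:45intro} at an interior point by a kernel vector $\zeta$ of $\tL(X,X^*)$, use positivity of each summand to conclude $\zeta\in V\otimes\C^n$, hence $\ker\tL(X,X^*)=\ker\tL|_V(X,X^*)$, and note that $V\subsetneq\C^q$ forces termination.

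The interesting divergence is in part~(1), where the paper (Proposition~\ref{p:no} and Lemma~\ref{l:aY}) runs a more elaborate program: it pads everything up to size $\eta=\max\{d,\ve\}$, works in the degree-$\leq 2$ space $\cV_2^{\rm h}$, artificially adjoins the hermitian-squares cone $\cS=\{\sum_i L_i^*L_i : L_i\in\cV_1\}$ and the block-structured subspace $\cU$, proves $\cU\cap(\cC+\cS)=\{0\}$ via Lemma~\ref{l:int}, separates using Klee's theorem, and then performs a full GNS-type construction on the degree-$\leq 1$ Hilbert space $\cV_1$ (building the operators $\ell_a$, $Y_j$ and the unitary $U:\C^\eta\otimes\C^\eta\to\cV_0$) to extract the tuple $X$. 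You instead observe that every polynomial appearing in \eqref{e:45intro} is already affine linear, work directly in that degree-$\leq 1$ space, and replace the GNS machinery with an elementary moment-matrix factorization: strict positivity of $\varphi$ on PSD constants makes the Gram matrix $\mu=(\varphi(E_{ab}))$ positive definite, factoring $\mu=V^*V$ produces a basis $v_1,\dots,v_q$, and the linear moments $\varphi(E_{ab}x_j)$ directly define $X_j\in\mat q$ so that $\varphi(M)=\langle M(X,X^*)v,v\rangle$ on the whole space. This buys three things. First, it avoids the degree-$2$ apparatus ($\cS$, $\cU$, and the block padding) entirely. Second, it eliminates the isometry trick $V:\C^d\to\C^\eta$ the paper needs to show $L(X,X^*)\succ 0$, because your separating functional is strictly positive on all of $\cC\setminus\{0\}$, which already includes $C^*LC$ for every $C\neq 0$. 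Third, as a minor bonus, it produces a witness $X$ of size $q=\ve$ rather than the paper's $\max\{d,\ve\}$ (though the theorem as stated does not need this sharpening). Your closedness argument for $\cC$ (Carath\'eodory plus the observation that the constant term $P_0+\sum_kC_k^*C_k$ controls the norms of $P_0$ and the $C_k$, using monicity of $L$) is sound and plays the role that the paper's reference to closedness of $\cC+\cS$ plays in Proposition~\ref{p:no}; your closing remark correctly identifies monicity and constancy of the $C_k$ as the structural features that make this delicate step go through. One minor housekeeping note: your name $\cS$ for the ambient space of hermitian affine linear pencils collides with the paper's $\cS$ (the SOS cone) — harmless, but worth avoiding.
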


\begin{proof}
See Proposition \ref{p:no}, Corollary \ref{c:rank} and its proof.
\end{proof}
 
For the special case of hermitian atoms 
with $\de=1$
the conclusion of Theorem \ref{thm:main2} can be significantly strengthened as the 
  final main result shows.

\begin{theorem}
\label{thm:main}
Suppose $f\in\px$ is a hermitian atom  and  $f(0)\succ 0$. 
If $\cD_f$ is proper and convex, then $f$ is of degree at most two, is  concave and is 
the Schur complement of any minimal size hermitian monic  
pencil $L$ satisfying $\cD_f=\cD_L$. 
\end{theorem}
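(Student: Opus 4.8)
The plan is to bootstrap from Theorem~\ref{thm:main2} to the Helton--McCullough theorem that a matrix concave noncommutative polynomial has degree at most two. First, normalize $f(0)=1$ (replace $f$ by $f(0)^{-1}f$, which alters neither $\cD_f$, nor hermitianness, nor atomicity). Now apply Theorem~\ref{thm:main2}: since $f$ is an atom, its minimal FM realization $f^{-1}=1+c^*L^{-1}\bb$ has an \emph{irreducible} pencil $L$ --- a single diagonal block in \eqref{e:triang} --- and since $\cD_f$ is convex and proper, Theorem~\ref{thm:main2} forces $\hL=L$; that is, after a basis change $L$ is a hermitian monic pencil and $\cD_f=\cD_L$. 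By the essential uniqueness of minimal LMI representations, this $L$ is, up to unitary conjugation, the unique minimal-size hermitian monic pencil cutting out $\cD_f$, and because $L$ is irreducible and $\cD_f=\cD_L$, the atom $f$ is recovered from $L$ (up to the scalar already fixed), so morally $L$ ``is'' $f$ in linearized form. It remains to pin down the degree of $f$ and the Schur-complement description.

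The decisive step is to show $f$ itself is the Schur complement of $L$. Using the minimal realization together with a Woodbury-type identity $f=1-c^*(L+\bb c^*)^{-1}\bb$, and the fact that $f$ is a \emph{polynomial}, I would put $L$, after a change of basis, into block form $L=\begin{pmatrix}L_{11}&L_{12}\\ L_{21}&L_{22}\end{pmatrix}$ with $L_{11}$ the scalar $(1,1)$ block, $L_{22}=L_{22}^*$, $L_{12}=L_{21}^*$, and $f=L_{11}-L_{12}L_{22}^{-1}L_{21}$. Granting this, concavity is immediate: the entries of $L$ are affine linear in $(x,x^*)$; on $\interior\cD_L$ the principal submatrix $L_{22}(X)$ is positive definite; and $(B,C)\mapsto BC^{-1}B^*$ is jointly operator convex for $C\succ0$. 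Hence $X\mapsto L_{12}(X)L_{22}(X)^{-1}L_{12}(X)^*$ is matrix convex on $\interior\cD_L$, and $f$, being affine linear minus a matrix convex function, is matrix concave on $\interior\cD_L$, which is a free open neighborhood of $0$.

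Now the Helton--McCullough theorem applies to the symmetric polynomial $f$: being matrix concave on a free open set containing the origin, it has degree at most two and is matrix concave on all tuples. Thus $\deg f\le 2$ and $f$ is concave. Writing such an $f$ in normal form $f=1+\Lambda(x,x^*)-V(x,x^*)^*V(x,x^*)$ with $\Lambda$ a hermitian homogeneous linear form and $V$ a column of homogeneous linear forms, the hermitian monic pencil $M:=\begin{pmatrix}1+\Lambda&V^*\\ V&I\end{pmatrix}$ satisfies $\cD_M=\cD_f$ and has Schur complement exactly $f$; deleting any linearly dependent components of $V$ and trimming yields a minimal-size hermitian monic pencil with the same properties. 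Finally, since any two minimal-size hermitian monic pencils with spectrahedron $\cD_f$ are unitarily equivalent, and unitary conjugation carries a Schur-complement presentation to a Schur-complement presentation, $f$ is the Schur complement of \emph{every} minimal-size hermitian monic pencil $L$ with $\cD_f=\cD_L$.

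The hard part is the second paragraph: converting the intrinsically non-self-adjoint FM realization of $f^{-1}$ (a constant $c$ paired against a linear $\bb$) into an honest \emph{hermitian} pencil whose Schur complement is the polynomial $f$ on the nose. Naive bordering of the realization pencil is not hermitian, and a sign slip produces $2-f^{-1}$ rather than $f$; the clean route is to pass through a symmetric/structured minimal realization of the hermitian rational function $f^{-1}$ while tracking signs, or equivalently to isolate as a standalone lemma that $\cD_f=\cD_L$ with $L$ minimal hermitian monic forces a block decomposition of $L$ whose Schur complement is the minimal-degree defining polynomial of $\cD_f$, namely $f$. Everything after that --- the operator-convexity estimate, the degree bound, and the passage to an arbitrary minimal $L$ --- is routine.
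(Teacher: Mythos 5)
There is a genuine gap, and you identify it yourself: the ``decisive step'' of showing $f$ actually \emph{is} the Schur complement of $L$ is asserted, deferred (``Granting this\dots''), and then declared to be ``the hard part,'' but never carried out. Everything downstream --- the operator-convexity estimate, the appeal to the Helton--McCullough degree bound for matrix concave polynomials, the normal form $1+\Lambda-V^*V$, and the passage to an arbitrary minimal $L$ --- hangs on that unproven claim. Moreover, your block-form sketch quietly assumes facts you have not established: for $L_{11}-L_{12}L_{22}^{-1}L_{21}$ to be a \emph{polynomial} rather than a rational function, the $(2,2)$ block $L_{22}$ must in fact be \emph{constant} after the change of basis, not merely positive definite on $\interior\cD_L$; and the block partition itself, with $L_{11}$ a scalar, has to come from somewhere.

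What fills the gap in the paper is Lemma~\ref{l:exp} together with Proposition~\ref{p:fL}. Lemma~\ref{l:exp} exploits hermitianness of $f^{-1}$ and irreducibility of $L$ (so that the $w(A,A^*)$ span $\mat d$) to pin down the off-diagonal data of the realization: there is $\lambda\in\R\setminus\{0\}$ with $\widecheck b_j=\lambda A_jc$, $\widehat b_j=\lambda A_j^*c$. Proposition~\ref{p:fL} then applies the FM inversion formula to obtain a minimal realization of the polynomial $f$ whose coefficient matrices are $A_j(I-\ve cc^*)$ and $A_j^*(I-\ve cc^*)$; joint nilpotency of these (Remark~\ref{r:facts}\ref{it:rf4}) together with irreducibility forces $\ve=1$, $P=I-cc^*$ to be a rank-$(d-1)$ orthogonal projection, and, after putting $P=0\oplus I_{d-1}$, the lower-right corner $\tA$ of $A$ to vanish. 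Only then does the explicit Schur-complement form of $f$ --- and with it the degree bound and concavity --- fall out by direct computation. You gesture toward ``a symmetric/structured minimal realization while tracking signs,'' which is the right instinct, but without some analogue of Lemma~\ref{l:exp} (collinearity of the $b$'s with $Ac$, $A^*c$, with a real, nonzero, sign-constrained constant $\lambda$) the argument does not close, and you cannot rule out the sign slip ($\ve=-1$) that you yourself flag. As a minor structural remark, once the Schur-complement form is in hand the appeal to the external Helton--McCullough concavity-implies-degree-two theorem is unnecessary: the explicit form $1-(\alpha x+\bar\alpha x^*)-(u\mydot x+v\mydot x^*)^*(u\mydot x+v\mydot x^*)$ already exhibits degree two and concavity directly, which is what the paper does.

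Your first paragraph is fine: normalizing $f(0)=1$ and using Theorem~\ref{thm:main2} plus atomicity and properness to conclude that the (single, irreducible) pencil in a minimal FM realization of $f^{-1}$ is similar to a hermitian monic pencil is a valid and slightly different entry point than Proposition~\ref{p:DfDL} (which goes via free loci and \cite[Theorem 3.11]{KV}), but both land in the same place. The last paragraph is also essentially right, modulo the usual reading that ``Schur complement'' refers to the existence of a compatible block decomposition, which transports under unitary equivalence.
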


\begin{proof}
See Section \ref{sec:pfmain}.
\end{proof}

Theorem \ref{thm:main} settles \cite[Conjecture 1.4]{DHM07}.
	In \cite[Theorem 5.4]{Vol2}, this result is further extended to the case when $f$ is not necessarily an atom but $\cD_{f+\ve}$ is proper and convex for all small $\ve>0$.

Noncommutative, or more accurately, freely noncommutative
 analysis has implications in the commutative setting,
particularly for LMIs.
Given a hermitian monic pencil $L$ 
the  set $\cD_L(1)$,  \df{level $1$} of the free spectrahedron
$\cD_L$,  
consisting  of  $\xi\in\C^g$ such that
$L(\xi,\overline{\xi})\succeq 0$
is a \df{spectrahedron} \cite{Viz}. 
Spectrahedra are currently of intense interest  in a number of areas; e.g.,
real algebraic geometry \cite{BPR13,Lau14,Tho},  optimization 
\cite{Nem,WSV,FGPRT15} and quantum information theory \cite{LP15,PNA}.
Problems involving free spectrahedra are typically tractable semidefinite programming
problems. Thus elevating  a problem involving spectrahedra 
to its free analog often  produces a tractable relaxation. The matrix cube 
problem of \cite{BN02,Nem} is a notable example of this phenomena \cite{HKM,HKMS}. See also 
\cite{DDOSS,KTT}. Theorem \ref{thm:rank} provides another example as it gives
a computationally tractable relaxation for the problem of determining whether
a polynomial is of constant sign on the interior of a spectrahedron.

\subsection{Reader's guide}\label{ssec:guide}
Section \ref{sec:prelim} contains background and some preliminary results on linear pencils, free
spectrahedra and realizations of noncommutative rational functions needed in the sequel. 
The proof of Theorem \ref{thm:main} is
given in Section \ref{sec:pfmain}, followed
by the proof of Theorem \ref{thm:main2}
and its corollary, Corollary \ref{cor:sacvx},
in Subsection \ref{ssec:pf12}. Corollary \ref{cor:algo12} and Theorem  
 \ref{thm:rank} are proved in the remainder of Section \ref{sec:algo}.
Subsection \ref{ssec:algo} contains
an algorithm
that, for a given noncommutative polynomial $f$ with convex $\cK_f$, constructs
a hermitian monic pencil
   $\widehat{L}$ with $\cD_{\widehat{L}}=\cK_f$.
Indeed, up to similarity, $\widehat{L}$
 is extracted from the monic linear
pencil $L$ appearing in a minimal FM realization of $f^{-1}$.
Subsection \ref{ssec:algo2} presents an efficient
algorithm for checking whether $\cK_f$ is convex. It is based on (the proof of) Theorem \ref{thm:main2} and representation theory and produces a 
finite sequence of semidefinite programs of decreasing size whose feasibility determines if $\cK_f$ is convex.
Section \ref{s:exa} presents several illustrative examples establishing optimality of our  main results. Further, Subsection \ref{ss:e4}
settles a conjecture from \cite{DHM07} on the degrees of atoms $f$ with convex 
$\cK_f$ in the negative.
In Section \ref{s:flip} we characterize
hermitian monic pencils that can arise in a minimal
realization of a noncommutative polynomial;
these pencils underpin our constructions
in Section \ref{s:exa}. Finally,
Section \ref{s:hered} provides a detailed analysis
of factorizations of hereditary noncommutative polynomials.
As a consequence, an
 hereditary minimal degree defining polynomial for a free spectrahedron is an atom, 
and hence has degree at most two, see Corollary \ref{cor:heredDefpoly}.

\section{Preliminaries}\label{sec:prelim}
Let $z=(z_1,\dots,z_g,z_{g+1},\dots,z_{2g})=(x_1,\dots,x_g,y_1,\dots,y_g)$
denote $2g$ freely noncommuting variables. Replacing $z_{g+j}=y_j$ with
$x_j^*$ identifies $\pz$ with $\px$. 
On the other hand, elements 
$f\in\pz$ are naturally evaluated at tuples 
$Z=(X,Y)\in \mat{n}^g\times \mat{n}^g = \mat{n}^{2g}$;  whereas
we evaluate  $f\in \px$ at $(X,X^*) \in \mat{n}^{2g}.$
The
use of $\pz$ versus $\px$ only signals our intent on 
viewing the domain of $f$ as either $\mat{n}^{2g}$ or
$\{(X,X^*):X\in \mat{n}^g\}\subset \mat{n}^{2g}$ respectively.
Indeed, we can identify $\pz$ with $\px$ 
whenever we work with  attributes of free polynomials
 that are {\it per se} independent of evaluations. 
For example, ring-theoretically there is no difference in using symbols $z_{g+j}$ 
instead of $x_j^*$ when talking about atomicity of polynomials.
 Therefore the results and definitions for matrix polynomials in $z=(z_1,\dots,z_{h})$, 
whose assumptions refer only to the structure, and not to evaluations, of polynomials, 
directly apply to matrix polynomials in $x_1,\dots,x_g,x_1^*,\dots,x_g^*$.

The \df{free locus} $\cZ_f$ of \index{$\cZ_f$}
 $f\in\pz^{\de\times\de}$  is the union, over $n\in\N$, of
$$\cZ_f(n) =\left\{(X,Y)\in\mat{n}^{2g}\colon \det f(X,Y)=0\right\}.$$
Assuming  $\det f(0)\neq0$, as in the introduction, let $\cK_f=\bigcup_n\cK_f(n)$, where $\cK_f(n)$ is the closure of the connected component of
$$\left\{(X,X^*)\in\mat{n}^{2g}\colon \det f(X,X^*)\neq 0\right\}$$
containing the origin. \index{$\cK_f$}

For 
$A=(A_1,\dots,A_g)\in\mat{d\times e}^g$ and $P\in\mat{e\times \de}$, we write
\begin{alignat*}{3}
A^* & :=(A_1^*,\dots,A_g^*),\qquad &  \index{$A^*$}
A \mydot x &:= \sum_j^g A_jx_j, \\ \index{$\mydot$}
AP & :=(A_1P,\dots,A_gP),\qquad & \ker A &:= \bigcap_j^g  \index{$\ker A$}
\ker A_j.
\end{alignat*} 
For a hermitian monic  pencil $L=I-A\mydot x-A^*\mydot x^*$ set 
$\partial\cD_L(n)  = \cZ_L(n)\cap\cD_L(n)$
and
\[
\partial\cD_L=\bigcup_{n\in\N}\partial\cD_L(n). \index{$\partial \cD_L$}
\]
Observe that since $L(0)\succ0$, it is easy to see that $\partial\cD_L(n)$ is precisely the topological boundary of $
\cD_L(n)$. Furthermore, $\cD_L(n)$ is the closure of its interior because of convexity. 
A non-constant hermitian monic pencil  $L$ is \df{minimal} if it is of minimal 
size among  hermitian monic pencils $L'$ satisfying $\cD_{L'}=\cD_L$. 
If $L$ and $M$ are minimal and $\cD_L=\cD_M,$ then $L$ and $M$
 are unitarily equivalent. (See Proposition~\ref{p:mp}.)
It is convenient to declare that the minimal pencil for the largest 
free spectrahedron $\cD_I=\{(X,X^*)\colon X\in\mat{n}^n,n\in\N\}$ is of size $0$. 
Every free semialgebraic set strictly contained in $\cD_I$ is called {\bf proper}.


\subsection{Free loci and spectrahedra}
\label{sec:star2free}

For $h,n\in\N,$ let $\Omega^{(n)}=(\Omega^{(n)}_1,\dots,\Omega^{(n)}_h)$ be an $h$-tuple of $n\times n$ \df{generic matrices}, that is,
$$\Omega^{(n)}_j=(\omega_{j\imath\jmath})_{\imath\jmath},$$
where $\omega_{j\imath\jmath}$ for $1\le j\le h$ and $1\le\imath,\jmath\le n$ are commuting indeterminates.\index{$\Omega^{(n)}$}

\begin{lem}\label{l:irr}
A  linear pencil
$L=I-A\mydot z$
is \irrL if and only if
\ben[label={\rm(\arabic*)}]
\item
$\ker A=\{0\}$ and $\ker A^*=\{0\}$; and
\item
$\det L(\Omega^{(n)})$ is an irreducible polynomial for all $n$ large enough.
\een
\end{lem}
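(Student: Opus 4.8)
The plan is to relate irreducibility of the pencil $L = I - A\mydot z$ in the free algebra to properties of the tuple $A = (A_1,\dots,A_h)$, and then to pass from the algebra to the commutative polynomial $\det L(\Omega^{(n)})$ via the theory of generic matrices. For the forward direction, suppose $L$ is \irrL, i.e.\ $A_1,\dots,A_h$ generate $\mat{d}$ as a $\C$-algebra. If $\ker A \neq \{0\}$, choose $0\neq v \in \bigcap_j \ker A_j$; then the subspace spanned by $v$ is invariant under every $A_j$, contradicting that $\{A_j\}$ generate all of $\mat{d}$ (which acts irreducibly on $\C^d$). The same argument applied to $A^*$ gives $\ker A^* = \{0\}$, yielding (1). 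For (2), I would invoke the known correspondence (from the realization/free-locus literature cited in the paper, e.g.\ \cite{KVV09,Vol17} and the results that Section \ref{sec:prelim} is assembling) between irreducibility of the monic pencil and irreducibility of the associated determinantal hypersurface in generic matrices: when $\{A_j\}$ generate $\mat{d}$, the polynomial $\det(I - \sum_j A_j \Omega^{(n)}_j)$ is irreducible for all large $n$. I expect this to be the step that actually requires the substantive input — it is essentially a noncommutative Bertini/irreducibility statement and should be quoted from an earlier result rather than reproved.

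For the converse, assume (1) and (2) hold but, for contradiction, that $L$ is not \irrL, so the unital algebra $\cA$ generated by $A_1,\dots,A_h$ is a proper subalgebra of $\mat{d}$. I would split into cases according to the structure of $\cA$. If $\cA$ has a nontrivial invariant subspace $W\subsetneq \C^d$ with $W\neq\{0\}$, then in a suitable basis all $A_j$ are block upper triangular with blocks of sizes $k$ and $d-k$; correspondingly $L$ is block upper triangular, so $\det L = \det L_{11}\cdot \det L_{22}$ factors nontrivially in $\px$, and also $\det L(\Omega^{(n)}) = \det L_{11}(\Omega^{(n)})\,\det L_{22}(\Omega^{(n)})$ is a nontrivial factorization for every $n$ — each factor is a genuine polynomial of positive degree because, if say $L_{11}$ were identically $I_k$, then all $A_j$ would have a common block of zeros, forcing $\ker A\neq\{0\}$ or $\ker A^*\neq\{0\}$, contradicting (1). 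This contradicts (2). If instead $\cA$ has no invariant subspace, then $\cA$ acts irreducibly on $\C^d$, and by Burnside's theorem $\cA = \mat{d}$, contradicting properness of $\cA$. Hence $L$ is \irrL.

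The main obstacle is the precise form of the irreducibility statement in (2): the converse direction is elementary linear algebra plus Burnside, and the "no invariant subspace" case is clean, but the forward direction (generators of $\mat{d}$ $\Rightarrow$ $\det L(\Omega^{(n)})$ irreducible for large $n$) is the nontrivial content. I would structure the write-up so that this implication is cited as a known fact about free loci and generic matrices (it is precisely the kind of statement the preliminaries section is designed to collect), and only supply the short linear-algebra arguments for the kernel conditions and the factorization-from-reducibility step. One small technical point to handle carefully: in the reducible case I must make sure the triangular blocks $L_{ii}$ are not just the identity — this is exactly where hypothesis (1) re-enters, so the two conditions (1) and (2) genuinely work together and neither is redundant.
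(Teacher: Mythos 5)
Your proof has the same skeleton as the paper's (cite the hard ``irreducible pencil $\Rightarrow$ irreducible determinant'' implication from the free-locus literature for the forward direction, use block-triangularization for the converse), but there is a genuine gap in the converse, precisely at the ``small technical point'' you flag at the end. You argue that the factors $\det L_{ii}(\Omega^{(n)})$ have positive degree because ``$L_{11}$ is not identically $I_k$,'' but that implication is false: $\det L_{11}(\Omega^{(n)})$ can be constantly $1$ even when $L_{11}\neq I_k$. For instance with $L_{11}=I_2 - N z_1$, $N=\left(\begin{smallmatrix}0&1\\0&0\end{smallmatrix}\right)$, one has $\det L_{11}(\Omega^{(n)})=\det(I-N\otimes\Omega^{(n)}_1)=1$ for every $n$, since $N\otimes\Omega^{(n)}_1$ is nilpotent. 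The correct dichotomy is not ``$L_{ii}$ equals the identity or not'' but ``the coefficients of $L_{ii}$ are jointly nilpotent or not'': $\det L_{ii}(\Omega^{(n)})$ is non-constant for large $n$ if and only if the coefficients are not jointly nilpotent (this is the content of Remark~\ref{r:facts}\ref{it:rf4}, ultimately \cite[Proposition 3.3]{KV}).

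To close the gap one needs to show that hypothesis (1) rules out the jointly nilpotent case, not merely the ``$L_{ii}=I$'' case. This is where Engel's theorem (Lemma~\ref{q:nolan}) enters: if the coefficients of $L_{1}$ (the top-left block) are jointly nilpotent, they can be simultaneously strictly upper-triangularized and hence share a common kernel vector $v$; since the $(2,1)$ block of each $A_j$ vanishes, the vector $(v,0)^{\mathsf T}$ lies in $\ker A$, contradicting (1). Dually, jointly nilpotent coefficients in $L_{2}$ produce a common left-kernel vector for the bottom-right blocks, giving a nonzero element of $\ker A^*$. Once that is in place, the remaining case is that neither block has jointly nilpotent coefficients, and then $\det L_{i}(\Omega^{(n)})$ is non-constant for large $n$, so $\det L(\Omega^{(n)})=\det L_1(\Omega^{(n)})\det L_2(\Omega^{(n)})$ is reducible, contradicting (2). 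That is exactly how the paper argues; your Burnside reduction to the block-triangular case and the forward direction are fine.
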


\begin{proof}
Assume $L$ is \irrL. Thus the $A_j$ have no common invariant subspace. In particular, $\ker A=\{0\}$ and $\ker A^*= \{0\}.$ Thus (1) holds.
The fact that (2) holds is contained in \cite[Theorem 3.4]{HKV}.

For the converse implication assume $L$ is not \irrL. So the $A_j$ have an invariant
subspace, and $L$ can be written in block form  as
\[
L=\begin{pmatrix}
L_1 & \star\\
0 & L_2
\end{pmatrix}.
\]
If the coefficients of $L_1$ are jointly nilpotent, then $\ker A\neq\{0\}$.
If the coefficients of $L_2$ are jointly nilpotent, then $\ker A^*\neq\{0\}$.
Otherwise $\det L_i(\Omega^{(n)})$ are non-constant for all large $n$
(cf.~Remark \ref{r:facts}\ref{it:rf4} below), and 
hence
\[
\det L(\Omega^{(n)})=\det L_1(\Omega^{(n)})\det L_2(\Omega^{(n)})
\]
is not irreducible for large $n$.
\end{proof}

Note that every \irrL hermitian monic pencil is minimal.
Two hermitian monic pencils $I_\de - \sum_{j=1}^g  A_j x_j  - \sum_{j=1}^g A_j^* x_j^*$ and $I_\de - \sum_{j=1}^g  B_j x_j  - \sum_{j=1}^g B_j^* x_j^*$
are unitarily equivalent if there is a unitary matrix $U$ such that
 $UA_j=B_jU$ for $1\le j\le g.$

\begin{prop}\label{p:mp} 
A minimal hermitian monic pencil is an orthogonal 
 direct sum of irredundant \irrL hermitian monic pencils.
 If $L_1$ and $L_2$ are minimal hermitian monic pencils
 with $\cD_{L_1}=\cD_{L_2},$ then $L_1$ and $L_2$ are unitarily
 equivalent.
\end{prop}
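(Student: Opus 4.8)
The statement to prove is Proposition~\ref{p:mp}: a minimal hermitian monic pencil is an orthogonal direct sum of irredundant \irrL hermitian monic pencils, and two minimal hermitian monic pencils defining the same free spectrahedron are unitarily equivalent. I would prove it in two stages, the decomposition first and the uniqueness second.

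For the decomposition, let $L = I - A\mydot x - A^*\mydot x^*$ be a minimal hermitian monic pencil of size $\de$, and consider the $*$-algebra $\cA \subseteq \mat\de$ generated by $A_1,\dots,A_g$ (which is closed under $*$ since $L$ is hermitian). As a finite-dimensional $*$-subalgebra of $\mat\de$ containing $I$, $\cA$ is semisimple, so $\C^\de$ decomposes orthogonally into $\cA$-invariant subspaces on which $L$ restricts to \irrL hermitian monic pencils $L^1,\dots,L^\ell$; writing $U$ for the unitary implementing this decomposition, $U^* L U = \bigoplus_i L^i$. (One must be slightly careful: the standard Wedderburn/Artin decomposition groups isotypic components, and within an isotypic component of multiplicity $m$ one gets $m$ unitarily equivalent copies of a single \irrL pencil; but for the present statement any orthogonal decomposition into \irrL summands suffices, and each such restriction is automatically a hermitian monic pencil.) Now $\cD_L = \bigcap_i \cD_{L^i}$. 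If this intersection were redundant, say $\cD_{L^1} \supseteq \bigcap_{i\ge 2}\cD_{L^i}$, then dropping $L^1$ gives a strictly smaller hermitian monic pencil with the same free spectrahedron, contradicting minimality of $L$. So the decomposition is irredundant, and since each \irrL hermitian monic pencil is minimal (as noted just before the proposition), each summand is as required.

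For the uniqueness statement, suppose $L_1$ and $L_2$ are minimal hermitian monic pencils with $\cD_{L_1} = \cD_{L_2}$. Here I would invoke the theory already set up in the paper: a minimal monic FM realization of a rational function is unique up to basis change (Remark~\ref{r:facts}), and a free spectrahedron $\cD_L$ determines $L$ up to the structure of its minimal realization. More concretely, the free locus / free spectrahedron recovers the pencil: by \cite{HKV}-type results on free loci, if $\cD_{L_1} = \cD_{L_2}$ with both minimal, then $L_1$ and $L_2$ have the same size $\de$ and are related by $L_2 = P^{-1} L_1 P$ for some invertible $P$, i.e. $P A_j^{(2)} = A_j^{(1)} P$ for all $j$ (where $A_j^{(k)}$ are the coefficients of $L_k$); this is where minimality is essential, ruling out spurious direct summands. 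Then one upgrades the intertwiner $P$ to a unitary: since $L_1$ and $L_2$ are both hermitian, taking adjoints of $P A_j^{(2)} = A_j^{(1)} P$ gives $A_j^{(2)} P^* = P^* A_j^{(1)}$, so $P^* P$ commutes with every $A_j^{(1)}$ and hence (by irreducibility of each summand and Schur's lemma, applied block-wise on the isotypic decomposition) $P^* P$ is a positive scalar on each isotypic block. Absorbing these scalars, the polar part $U = P(P^*P)^{-1/2}$ is unitary and still satisfies $U A_j^{(2)} = A_j^{(1)} U$, giving the unitary equivalence.

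**Main obstacle.** The genuinely hard input is the rigidity step: that $\cD_{L_1} = \cD_{L_2}$ with both pencils minimal forces an invertible intertwiner relating them (equivalently, that the free spectrahedron determines the minimal pencil up to similarity). This is not formal — it rests on the analysis of free loci and the structure of minimal realizations, and is presumably the content of results cited in or proved elsewhere in the paper (the $\cite{HKV}$ circle of ideas, or Proposition~\ref{p:mp}'s own supporting lemmas). Once that intertwiner is in hand, the passage from "similar" to "unitarily equivalent" for hermitian pencils, and the irredundancy argument for the decomposition, are both routine. A secondary technical point to handle with care is the bookkeeping on isotypic multiplicities in the Wedderburn decomposition — within a multiplicity-$m$ block, "unitarily equivalent" must be interpreted up to permuting the $m$ identical copies, but this does not affect either assertion of the proposition.
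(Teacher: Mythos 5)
Your decomposition argument coincides with the paper's: since the coefficient set $\{A_j,A_j^*\}$ of a hermitian monic pencil is $*$-closed, every invariant subspace is reducing, so $L$ decomposes as an orthogonal direct sum of \irrL hermitian monic blocks, and irredundancy follows from minimality exactly as you argue. For the uniqueness statement the paper does no work at all: it simply cites \cite[Theorem 1.2]{HKM} (see also \cite[Section 6]{DDOSS}), which asserts precisely that minimal hermitian monic pencils with the same free spectrahedron are unitarily equivalent. You instead sketch the underlying proof: pass from $\cD_{L_1}=\cD_{L_2}$ to $\cZ_{L_1}=\cZ_{L_2}$ (the missing ingredient you gesture at is Proposition~\ref{p:pd-min}, Zariski density of $\partial\cD_L(n)$ in $\cZ_L(n)$), invoke the free-locus rigidity theorem \cite[Theorem 3.11]{KV} to obtain an invertible intertwiner $P$, then polar-decompose $P$ to a unitary. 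That plan is sound and consistent with the machinery the paper develops elsewhere. Two small bookkeeping slips in the last step, worth fixing: the identity $A_j^{(2)}P^*=P^*A_j^{(1)}$ is the adjoint of the $x_j^*$-coefficient intertwining $P(A_j^{(2)})^*=(A_j^{(1)})^*P$, not of $PA_j^{(2)}=A_j^{(1)}P$; and what results is that $P^*P$ commutes with the $A_j^{(2)}$ (equivalently $PP^*$ with the $A_j^{(1)}$), not $P^*P$ with the $A_j^{(1)}$. The polar part $U=P(P^*P)^{-1/2}$ still intertwines because $(P^*P)^{-1/2}$ commutes with all $A_j^{(2)}$ and $(A_j^{(2)})^*$. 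In short: same argument for the decomposition and irredundancy; for uniqueness you supply a correct sketch of a fact the paper merely cites.
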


\begin{proof}
Let $L = I_\de - \sum_{j=1}^g  A_j x_j  - \sum_{j=1}^g A_j^* x_j^*$
be a given  hermitian monic pencil. By an invariant subspace
 for $L$, we mean an invariant subspace for $\{A_1,\dots,A_g,A_1^*,\dots,A_g^*\}.$
 Since $L$ is hermitian, any invariant subspace for $L$ 
 is in fact reducing.
Hence $L=\oplus_i L^i,$ where each $L^i$ is a hermitian monic
 pencil with no nontrivial  invariant (equivalently reducing) subspaces. 
 Thus each $L^i$ is an \irrL hermitian monic pencil.

 If there is an $i$ such that $\cD_{L^i}\subseteq \bigcap_{j\ne i} \cD_{L^j},$
 then, setting $M=\oplus_{j\ne i} L^j$ it follows that
 $\cD_M=\cD_L$ and $M$ has smaller size than $L.$ Hence if $L$
 is minimal, then $L$ is irredudant.

 The last statement is \cite[Theorem 1.2]{HKM}. See
 also \cite[Section 6]{DDOSS}. 
\end{proof}

\begin{prop}[{\cite[Proposition 8.3]{HKV}}]\label{p:pd-min}
If $L$ is a minimal hermitian monic pencil, then $\partial\cD_L(n)$ is Zariski dense in $\cZ_L(n)$ for all $n$ large enough.

In particular, if $f$ is a polynomial and $\partial \cD_L\subseteq \cZ_f$, 
then $\cZ_L\subseteq \cZ_f$.
\end{prop}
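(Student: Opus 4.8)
The ``in particular'' clause follows quickly from the density assertion: if $f$ is a polynomial with $\partial\cD_L\subseteq\cZ_f$, then for all large $n$ the polynomial $\det f$ on $\mat n^{2g}$ vanishes on $\partial\cD_L(n)$, hence on its Zariski closure $\cZ_L(n)$, so $\cZ_L(n)\subseteq\cZ_f(n)$ for all large $n$; since containment of free loci is detected by a divisibility over the free field that stabilises in $n$ (cf.\ \cite{HKV}), this gives $\cZ_L\subseteq\cZ_f$. So the content is the density assertion, which I would attack as follows. First reduce to the \irrL case: write $L=\bigoplus_{i=1}^\ell L^i$ with $L^i=I-A^i\mydot x-(A^i)^*\mydot x^*$ an \irrL hermitian monic pencil, the summands irredundant (Proposition~\ref{p:mp}); then $\cZ_L(n)=\bigcup_i\cZ_{L^i}(n)$, and by Lemma~\ref{l:irr} each $\cZ_{L^i}(n)$ is an irreducible hypersurface for $n$ large. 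It then suffices to produce, for each $i$ and all large $n$, a subset of $\partial\cD_L(n)$ that is Zariski dense in $\cZ_{L^i}(n)$, since the Zariski closure of $\partial\cD_L(n)$ then contains every $\cZ_{L^i}(n)$, hence all of $\cZ_L(n)$.

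Fix $i$, and suppose one has located a point $X^0\in\partial\cD_L(n)$ with $\dim\ker L^i(X^0,(X^0)^*)=1$ and $L^j(X^0,(X^0)^*)\succ0$ for all $j\neq i$. Granting this, the argument would run: on the real-analytic hypersurface $S^i=\{X\in\mat n^g\colon\det L^i(X,X^*)=0\}$ the least eigenvalue of $L^i$ stays simple and equal to $0$ near $X^0$, while all remaining eigenvalues of $L=\bigoplus_j L^j$ stay positive, so a neighbourhood $S^i\cap U$ of $X^0$ in $S^i$ lies in $\partial\cD_L(n)$. Under the $\R$-linear embedding $X\mapsto(X,X^*)$ of $\mat n^g$ into $\mat n^{2g}$, whose image is a totally real subspace of real dimension $\dim_\C\mat n^{2g}$, the complexification of the real polynomial $\det L^i(X,X^*)$ has zero set $\cZ_{L^i}(n)$; moreover $d(\det L^i)(X^0)\neq0$ because the zero eigenvalue is simple, so $S^i\cap U$ is a totally real real-analytic submanifold of $\cZ_{L^i}(n)$ of real dimension $\dim_\C\cZ_{L^i}(n)$. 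Such a submanifold is Zariski dense in the irreducible $\cZ_{L^i}(n)$: its Zariski closure is an irreducible complex subvariety of $\cZ_{L^i}(n)$ which is smooth at a generic point of $S^i\cap U$ and whose tangent space there contains the maximal totally real subspace $T_{X^0}(S^i\cap U)$, and no proper subvariety can have that property. Running over $i$ completes the argument.

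The one non-routine step is the existence, for $n$ large, of the point $X^0$; equivalently, that the boundary of the free spectrahedron $\cD_L(n)$ genuinely carries points where one block has a one-dimensional kernel and the other blocks are strictly positive definite. Concretely, one seeks $X^1\in\mat n^g$ for which the ray $t\mapsto(tX^1,t(X^1)^*)$ leaves $\cD_{L^i}(n)$ at a first value $t_0$ at which the Hermitian matrix $A^i\mydot X^1+(A^i)^*\mydot(X^1)^*$ has a simple largest eigenvalue, while $L^j(t_0X^1,t_0(X^1)^*)\succ0$ for $j\neq i$. This is exactly where minimality---irredundancy of the blocks, and the absence of forced eigenvalue multiplicities on $\partial\cD_L$---and the largeness of $n$ are used in an essential way; it is the technical heart of \cite[Proposition~8.3]{HKV}, which I would invoke here. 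The remaining ingredients, namely the reduction to \irrL blocks, the totally real Zariski density argument, and the deduction of the ``in particular'' clause, are routine.
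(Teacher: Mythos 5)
The paper offers no proof of this proposition: it is imported verbatim as a citation to \cite[Proposition~8.3]{HKV}, so there is no internal argument to compare yours against. Your outline is a sensible reconstruction of what such a proof must look like: reduce to an irredundant direct sum of \irrL\ hermitian blocks via Proposition~\ref{p:mp}; for each block produce, for large $n$, a boundary point $X^0\in\partial\cD_L(n)$ at which that block alone is singular with one-dimensional kernel while the others are strictly positive; observe that a real-analytic hypersurface neighbourhood of $X^0$ then lies in $\partial\cD_L(n)$; and use the standard totally-real fact that a smooth real point of an irreducible complex hypersurface cut out by a real polynomial forces Zariski density of the real locus. The ``in particular'' clause then follows by the padding-by-zeros argument, since $\det f\big((X,Y)\oplus 0\big)=\det f(X,Y)\cdot\det f(0)^{\,k}$ and $\det f(0)\neq 0$, so inclusion of free loci at large $n$ propagates down to all $n$. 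All of this is fine.

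However, be honest with yourself that this is not a proof. The step you flag as the ``technical heart''---the existence, for large $n$, of a boundary point at which exactly one irreducible block degenerates, and degenerates with a simple kernel---is precisely the substance of \cite[Proposition~8.3]{HKV}, so your appeal to that result at that point is circular. In effect you have carried out the same reductions that \cite{HKV} does and then deferred to \cite{HKV} for the rest, which is what the present paper does more briefly by simply citing. If you wanted a self-contained argument you would need to actually construct such $X^0$ (this is where irredundancy, the nonexistence of common reducing subspaces, and the freedom afforded by large $n$ must all enter); until then the proposal should be labelled a reduction, not a proof plan.
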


\begin{prop}\label{p:irrpoly}
If $f\in \matt{\de}{\pz}$ and  $\det f(0)\neq0$, 
then $f$ is an \irr if and only if $\det f(\Omega^{(n)})$ is an irreducible polynomial for all $n$ large enough.
\end{prop}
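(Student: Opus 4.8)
The plan is to deduce this from Lemma~\ref{l:irr} together with the theory of FM realizations. First I would reduce to the linear case. Write a minimal FM realization $f^{-1} = I_\de + c^* L^{-1} \bb$ of size $d$, with $L = I_d - A\mydot z$ a monic pencil. The key linking fact is that, up to a nonzero scalar and up to the generic matrices being large enough, $\det f(\Omega^{(n)})$ and $\det L(\Omega^{(n)})$ differ by a unit: indeed, by the Schur complement formula applied to the block matrix $\begin{pmatrix} L & \bb \\ -c^* & I_\de\end{pmatrix}$ (realizing $f^{-1}$), one gets $\det f(\Omega^{(n)})^{-1} = \det L(\Omega^{(n)})^{-1} \cdot (\text{unit})$, hence $\det f(\Omega^{(n)})$ and $\det L(\Omega^{(n)})$ generate the same ideal in the ring of commuting polynomials (for $n$ large). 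So $\det f(\Omega^{(n)})$ is irreducible for all large $n$ if and only if $\det L(\Omega^{(n)})$ is.

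Next I would translate atomicity of $f$ into irreducibility of the pencil $L$. Since the FM realization is minimal, $L$ is \irrL (this is part of the standard package on minimal realizations recalled in Remark~\ref{r:facts}; alternatively, non-irreducibility of $L$ would let one reduce the realization size, contradicting minimality). Conversely, factorizations $f = f_1 f_2$ into non-invertible matrix polynomials correspond, via the realization calculus, to block-triangular decompositions of $L$ with non-nilpotent diagonal blocks — this is essentially the Cohn-style factorization theory for full matrices over the free algebra, with the determinant of the generic-matrix evaluation playing the role of a "norm." Thus $f$ is an atom iff $L$ admits no such nontrivial block-triangularization iff $L$ is \irrL (using that the diagonal blocks cannot be jointly nilpotent, since $\det f(0)\neq 0$ forces the relevant evaluations to be non-constant). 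Applying Lemma~\ref{l:irr} to $L$ then finishes: $L$ is \irrL iff $\ker A = \ker A^* = \{0\}$ and $\det L(\Omega^{(n)})$ is irreducible for all large $n$; and the kernel conditions are automatic once $L$ comes from a minimal realization, so they impose nothing extra. Chaining the equivalences gives the claim.

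The main obstacle I anticipate is making the correspondence between polynomial factorizations of $f$ and block-triangular forms of the realization pencil $L$ precise enough to conclude "atom $\iff$ $L$ irreducible," rather than just one implication. One clean route that avoids this: bypass factorizations altogether and argue purely at the level of generic-matrix evaluations. Namely, $f$ is an atom in $\matt\de\pz$ iff $f$ does not factor; by a theorem in the spirit of \cite{Coh06} (fullness and the noncommutative UFD-type structure), non-factorability of $f$ is detected by irreducibility of $\det f$ over the (commutative) generic matrix ring for $n\gg 0$, which is exactly what we want — in that case the proposition is almost a restatement and the content is the transfer result already embodied in \cite[Theorem 3.4]{HKV} invoked in Lemma~\ref{l:irr}, now applied to $f$ in place of a linear pencil. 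I would decide between the two routes depending on exactly how much of the factorization dictionary has been set up by this point in the paper; given that Lemma~\ref{l:irr} and its proof are already available, leaning on the same underlying transfer theorem for the possibly-nonlinear $f$ is the most economical path.
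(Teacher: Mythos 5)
Your first route breaks down at the claim ``since the FM realization is minimal, $L$ is \irrL.'' Minimality of a realization means observability and controllability, which are joint conditions on $A$, $b$ and $c$; it says nothing about the coefficients of $L$ lacking a common invariant subspace. If $f$ is a product of two non-invertible atoms, the pencil $L$ in a minimal FM realization of $f^{-1}$ is genuinely block upper-triangular with two nontrivial diagonal blocks, exactly as in \eqref{e:triang} and the setup of Theorem~\ref{thm:main2}; that $L$ is not \irrL even though the realization is minimal. More seriously, the equivalence ``$f$ is an \irr $\iff$ $L$ is \irrL'' that your argument needs is precisely Proposition~\ref{p:FM}\ref{it:indec}, which is stated only for $\de=1$ and whose proof (in both directions) invokes Proposition~\ref{p:irrpoly} itself, so passing through that equivalence is circular. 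Your second, ``clean'' route correctly senses that the real content is a transfer theorem between atomicity of $f$ in the free algebra and irreducibility of $\det f(\Omega^{(n)})$ in the commutative ring, but you stop short of supplying or locating it: asserting that non-factorability is detected by irreducibility of the generic determinant is the statement of the proposition, not a proof, and \cite[Theorem~3.4]{HKV} (the result behind Lemma~\ref{l:irr}) concerns pencils only.

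The paper's proof is short and does not pass to a realization of $f$ at all. The forward implication (\irr implies irreducible generic determinant) is a direct citation of \cite[Theorem~4.3(1)]{HKV}, which is the polynomial-level transfer theorem you were looking for. The converse is proved by contraposition without machinery: if $f=f_1f_2$ with both $f_i$ non-invertible, then $\det f(\Omega^{(n)})=\det f_1(\Omega^{(n)})\det f_2(\Omega^{(n)})$, and each factor is non-constant for large $n$ (Remark~\ref{r:facts}\ref{it:rf4}, applied to a minimal realization of $f_i^{-1}$, together with the identity $\det f_i(\Omega^{(n)})=\det L_i(\Omega^{(n)})$ shows that constancy would force $f_i^{-1}$ to be a polynomial), so $\det f(\Omega^{(n)})$ is reducible.
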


\begin{proof}
The forward implication  is
\cite[Theorem 4.3(1)]{HKV}. For the converse, 
suppose $f$ factors as $f=f_1f_2$, where the $f_i$ are non-invertible. 
By Remark \ref{r:facts}\ref{it:rf4} below, 
$\det f_i(\Omega^{(n)})$ is  non-constant for large $n$. But then 
$\det f(\Omega^{(n)})$ is not irreducible for large $n$.
\end{proof}

\begin{prop}\label{p:DZ}
Let $f\in\matt{\de}{\px}$ satisfy $\det f(0)\neq 0$, and let $L$ be a 
 hermitian monic pencil.
\begin{enumerate}[label={\rm(\arabic*)}]
	\item \label{i:DZ1} If $\cZ_f=\cZ_L,$ then $\cK_f=\cD_L$.
        \item \label{i:DZ2} If $L$ is minimal and $\cK_f=\cD_L$, then 
$\cZ_f \supseteq \cZ_L$.
	\item \label{i:DZ3} If $f$ is an \irr and $L$ is minimal, then $\cK_f=\cD_L$ implies $\cZ_f=\cZ_L$.
\end{enumerate}
\end{prop}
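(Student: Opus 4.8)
I would prove the three items in the order \ref{i:DZ1}, \ref{i:DZ2}, \ref{i:DZ3}, deducing \ref{i:DZ3} from \ref{i:DZ2} together with an irreducibility/dimension count and a direct-sum trick. For \ref{i:DZ1} the key observation is that each of $\cK_f(n)$ and $\cD_L(n)$ is, by construction, the closure of the connected component of the origin of the complement of a free locus in the slice $\{(X,X^*)\}$. For $\cK_f(n)$ this is the definition. For $\cD_L(n)$, note that $\{(X,X^*):L(X,X^*)\succ0\}$ is convex (hence connected), contains $0$, and is relatively clopen in $\{(X,X^*)\}\setminus\cZ_L(n)$: openness is clear, and closedness holds because the number of negative eigenvalues of $L(X,X^*)$ is locally constant off $\cZ_L$ and vanishes at $0$ since $L(0)=I$. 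Hence this set is exactly the component of the origin, and $\cD_L(n)$ is its closure. Since $\cZ_f(n)=\cZ_L(n)$, the two components and their closures agree, so $\cK_f=\cD_L$.

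\textbf{Item \ref{i:DZ2}.} I would first record the general fact that $\partial\cK_f(n)\subseteq\cZ_f(n)$ for any $f$ with $\det f(0)\neq0$. The component $C$ of the origin in the open set $\{(X,X^*):\det f(X,X^*)\neq0\}$ is itself open (local connectedness), hence $C\subseteq\interior\cK_f(n)$ and $\partial\cK_f(n)\subseteq\overline C\setminus C$; a point of $\overline C\setminus C$ lying off $\cZ_f(n)$ would carry a ball inside $\{\det f\neq0\}$ meeting $C$, forcing it into $C$ — a contradiction. Now $\cK_f=\cD_L$ and, since $L(0)\succ0$, the set $\partial\cD_L(n)$ is the topological boundary of $\cD_L(n)$; therefore $\partial\cD_L\subseteq\cZ_f$. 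As $L$ is minimal, Proposition~\ref{p:pd-min} (Zariski density of $\partial\cD_L(n)$ in $\cZ_L(n)$ for large $n$, and its ``in particular'' consequence) upgrades this to $\cZ_L\subseteq\cZ_f$.

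\textbf{Item \ref{i:DZ3}.} By \ref{i:DZ2} only $\cZ_f\subseteq\cZ_L$ remains. Step one (large levels): since $f$ is an \irr, $\det f(\Omega^{(n)})$ is irreducible for all large $n$ (Proposition~\ref{p:irrpoly}), so $\cZ_f(n)$ is an irreducible hypersurface; since $f$ is non-invertible, $\cK_f$ is proper and $L$ is nonconstant, so $\det L(\Omega^{(n)})$ is nonconstant for large $n$ (Remark~\ref{r:facts}\ref{it:rf4}) and $\cZ_L(n)$ is a nonempty subvariety of pure codimension one contained in the irreducible $\cZ_f(n)$; comparing dimensions, $\cZ_L(n)=\cZ_f(n)$. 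Step two (all levels): given $(X,Y)\in\cZ_f(n)$, choose $m$ large enough that $\cZ_f(n+m)=\cZ_L(n+m)$; multiplicativity of $\det$ on block-diagonal evaluations gives $\det f(X\oplus0_m,Y\oplus0_m)=\det f(X,Y)\det f(0)=0$, so $X\oplus0_m\oplus$ lies in $\cZ_L(n+m)$, whence $\det L(X,Y)=\det L(X,Y)\det L(0)=\det L(X\oplus0_m,Y\oplus0_m)=0$ and $(X,Y)\in\cZ_L(n)$. Thus $\cZ_f=\cZ_L$.

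\textbf{Main obstacle.} The genuinely substantial input is Proposition~\ref{p:pd-min} used in \ref{i:DZ2}: it is precisely Zariski density of the topological boundary of a minimal $\cD_L$ in the full free locus $\cZ_L$ that converts the boundary containment $\partial\cD_L\subseteq\cZ_f$ into the locus containment $\cZ_L\subseteq\cZ_f$, and minimality cannot be dropped. The second delicate point is the level-by-level reduction in \ref{i:DZ3}, where one must be sure the required large level $n+m$ is available and that $\det L(0)=1\neq0$ — both immediate. Everything else (openness of connected components, local constancy of inertia of $L$ off $\cZ_L$, multiplicativity of the determinant under direct sums, and the codimension count for a closed subvariety of an irreducible hypersurface) is routine.
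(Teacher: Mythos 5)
Your argument tracks the paper's proof closely. For item \ref{i:DZ1} you observe that $\{L\succ0\}$ is the connected component of the origin in the complement of $\cZ_L$ and argue via clopenness; the paper reaches the same conclusion via a path that would have to cross $\cZ_L=\cZ_f$, and the substance is identical. Item \ref{i:DZ2} is verbatim the paper's argument through Proposition~\ref{p:pd-min}. For item \ref{i:DZ3} the paper simply writes that $\cZ_f(n)$ is irreducible for large $n$ by Proposition~\ref{p:irrpoly} ``and thus $\cZ_f=\cZ_L$''; you usefully unwind this, first noting that $\cZ_L(n)$ is a nonempty pure-codimension-one subvariety of the irreducible hypersurface $\cZ_f(n)$ (forcing equality at large $n$), and then descending to all levels by the direct-sum embedding $(X,Y)\mapsto(X\oplus0,Y\oplus0)$ and multiplicativity of the determinant. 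Both of these are exactly the details the paper leaves tacit.

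One small inaccuracy worth flagging: ``since $f$ is non-invertible, $\cK_f$ is proper'' is not automatic. For instance the atom $f=1+x$ (which is not hermitian, and the proposition does not assume hermitian $f$) has $\cZ_f\neq\varnothing$ while $\cK_f(n)=\mat{n}^g$ for every $n$, because $\{X:\det f(X,X^*)\neq0\}$ is the complement of a complex hypersurface and hence connected. What you actually need in item \ref{i:DZ3} (and what the paper shares as a tacit hypothesis) is that the minimal pencil $L$ has positive size, i.e.\ $\cD_L$ is proper. Granting that, your conclusion that $L$ is nonconstant is correct, and then for a hermitian monic $L\neq I$ the coefficients cannot be jointly nilpotent (a positive semidefinite nilpotent $A_jA_j^*$ vanishes), so $\cZ_L\neq\varnothing$ and $\det L(\Omega^{(n)})$ is nonconstant for large $n$, as you want; this is a cleaner route to that fact than the citation to Remark~\ref{r:facts}\ref{it:rf4}, which concerns pencils arising from realizations of polynomials rather than pencils per se.
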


\begin{proof}
To prove item \ref{i:DZ1} let $(X,X^*)$ be a point in the connected component $\cO$ of
$$\left\{(X,X^*)\in\mat{n}^{2g}\colon \det f(X,X^*)\neq 0\right\}$$
containing the origin. Thus, there exists a path
$\gamma$ in $\cO$ with $\gamma(0)=0$ and $\gamma(1)=(X,X^*)$.
If  $L(X,X^*)\nsucc0,$ then there exists $t\in (0,1)$ such that $\det L(\gamma(t))=0$, contradicting $\cZ_f=\cZ_L$. Therefore $L(X,X^*)\succ0$. A similar argument shows
 $L(X,X^*)\succ0$ implies $(X,X^*)\in \cO$. Taking closures obtains $\cK_f=\cD_L$.

Taking up items \ref{i:DZ2} and \ref{i:DZ3}, suppose $L$ is minimal. If 
$\cK_f=\cD_L$, then  they have the same topological boundary.
Since the topological boundary of $\cK_f(n)$ is contained in $\cZ_f(n)$ and $\partial\cD_L(n)$ is Zariski dense in $\cZ_L(n)$ for large $n$ by Proposition \ref{p:pd-min}, $\cZ_f\supseteq\cZ_L$.
If also $f$ is \irr, then  $\cZ_f(n)$ is irreducible for large $n$ by Proposition 
\ref{p:irrpoly} and thus $\cZ_f=\cZ_L$.
\end{proof}

\subsection{Realization theory}\label{ssec:real}
Let $\matt{\de}{\rz}$  denote the $\de\times\de$ noncommutative (nc) rational functions in  $z_1,\dots,z_h$ \cite{Coh06,KVV12,Vol1}. \index{$\rz$}
Evaluations and  the involution for polynomials
naturally extend to 
$\matt{\de}{\rz}$ and 
$\matt{\de}{\rxx}$, respectively. Both operations are
entirely transparent for FM realizations (equation \eqref{e:fm}), the realizations we use in this paper.

\begin{rem}\label{r:facts}
 Realization theory in general has roots in automata theory \cite{Sch,Sch1,Fli,Fli1} and can be traced back further to \cite{Kleene}, while FM realizations in the commutative setting arise from control theory \cite{FM}.
For later use we recall the following fundamental results about minimal FM realizations. 
 Each is an embodiment of a well understood general principle of
 realization theory for matrix functions in one (commutative) variable \cite{BGKR}.
 For the original statements and proofs,  see \cite{BGM05,KVV09,Vol17}.

\begin{enumerate}[label={\rm(\arabic*)}]
	\item \label{it:rf1} 
      An FM  realization  $I+c^* (I-A\mydot z)^{-1}(b\mydot z)$ of size $d$
   with $c,b_j\in M_{d\times \delta}(\C)$      is \df{controllable} if
\[
  \Span\{A^wb_j u\colon w\in\Langle z\Rangle, 1\le j\le g, u \in \C^\de \}  =\C^d, \\
\]
 and \df{observable} if
\[
\Span\{(A^*)^wc u\colon w\in\Langle z\Rangle, u \in \C^\de \}  =\C^d.
\]
 It is a fundamental result that a realization  is
 minimal  if and only if it is observable and controllable.
 An immediate consequence, and one that is used here, is, for a minimal
 realization, 
	$$c^*v=0 \text{ and } v\in\ker A \quad \Rightarrow \quad v=0$$
	and
	$$v^*b=0 \text{ and } v\in\ker A^* \quad \Rightarrow \quad v=0.$$
	\item \label{it:rf2} 
   The state space isomorphism theorem says 
 minimal FM realizations are unique up to an isomorphism (change of basis)
	between their  state spaces. That is, if 
   $I+c^* (I-A\mydot z)^{-1}(b\mydot z)$ and $I+\gamma^* (I- B \mydot z)^{-1}(\beta \mydot z)$ 
    are two minimal FM realizations for the same rational function, then
 they have the same size, say $d,$ and there is $d\times d$ invertible
 matrix $S$ such that $SA=BS$, $Sb=\beta$ and $c^*S^{-1}=\gamma$.
   \item \label{it:rt+} Given a realization $I+c^* (I-A\mydot z)^{-1}(b\mydot z)$
  there is a linear algebra algorithm -- an extension of the Kalman decomposition -- that
 produces a minimal realization. 
	\item \label{it:rf3} In the classical (commutative) one-variable
 setting, if $\rr(\zeta)=I+ \zeta c^*(I-A \zeta)^{-1}b$ is a minimal FM realization, 
 then the domain of $\rr$ is precisely the set of $\zeta$ for which
  $I-\zeta A$ is invertible. In the present several variable noncommutative
 setting, while there are some subtleties in the statement of the analogous
 result found in \cite{KVV09,Vol17}, these results do justify calling
  the complement of $\cZ_L$ the \df{domain of regularity} of the rational 
 function with minimal realization  $\rr=I+c^*L^{-1}\bb.$    
	\item \label{it:rf4} If $\rr=I+c^*L^{-1}\bb$ is a minimal
 realization, then $\rr$ is a polynomial if and only if 
  the coefficients of $L$ are jointly nilpotent.
 Indeed, if $\rr$ is a polynomial, then  by item \ref{it:rf3}, $\cZ_L=\varnothing.$
 By \cite[Proposition 3.3]{KV}, $\cZ_L=\varnothing$
 if and only the coefficients of $L$ are jointly nilpotent. The converse
 is immediate.

\item \label{it:rf5}  Lastly, if $\rr=I+c^*L^{-1}\bb$ is an FM realization, then 
\begin{equation}\label{e:fmi}
\rr^{-1}=I-c^*\big(
I-(A-b c^*)\mydot z
\big)^{-1}\bb
\end{equation}
is an FM realization of $\rr^{-1}$ by \cite[Theorem 4.3]{BGM05}.
Because the realizations \eqref{e:fm} and \eqref{e:fmi} are of the same size, 
  \eqref{e:fm} is minimal for $\rr$ if and only if \eqref{e:fmi} is minimal for $\rr^{-1}$.
\end{enumerate}
\end{rem}

\begin{prop}\label{p:FM}
Let $f\in\matt{\de}{\pz}$ be non-constant with  $f(0)=I$. If $I+c^*L^{-1}\bb$ is a minimal FM realization of $f^{-1}$ with $L=I-A\mydot z$, then 
\begin{enumerate}[label={\rm(\arabic*)}]
	\item \label{FM1} $\det f(\Omega^{(n)})=\det L(\Omega^{(n)})$ for all $n$.
\end{enumerate}
If moreover $\de=1$, then
\begin{enumerate}[resume,label={\rm(\arabic*)}]
	\item \label{FM2} $\ker A^*=\{0\}$ and $\ker A=\{0\}$;
	\item  \label{it:indec} 
	$L$ is \irrL if and only if $f$ is an \irr.
\end{enumerate}
\end{prop}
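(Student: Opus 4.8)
The plan is to treat the three parts in order, exploiting the minimality of the FM realization $f^{-1}=I+c^*L^{-1}\bb$ together with the inversion formula \eqref{e:fmi}. For part \ref{FM1}, I would first apply Remark~\ref{r:facts}\ref{it:rf5}: since $f^{-1}=I+c^*L^{-1}\bb$ is minimal, the realization $f=I-c^*(I-(A-bc^*)\mydot z)^{-1}\bb$ is a minimal FM realization of $f$, with pencil $\hL:=I-(A-bc^*)\mydot z$. The key observation is that $f$ is a polynomial, so by Remark~\ref{r:facts}\ref{it:rf4} the coefficients of $\hL$ are jointly nilpotent, hence $\det \hL(\Omega^{(n)})=1$ for all $n$; equivalently $\cZ_{\hL}=\varnothing$. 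Now I would use the standard Schur-complement identity for FM realizations: writing $f^{-1}$ and $f$ in block form via their realizations, one has the matrix identity relating $\det f(Z)$, $\det L(Z)$ and $\det \hL(Z)$ — concretely, evaluating $f^{-1}=I+c^*L^{-1}\bb$ and using the determinant formula $\det(I+c^*L^{-1}\bb)=\det(L+\bb c^*)/\det L$ shows $\det f^{-1}(\Omega^{(n)})=\det(L+\bb c^*)(\Omega^{(n)})/\det L(\Omega^{(n)})$, and the numerator is (up to sign/reindexing) $\det \hL(\Omega^{(n)})$ after identifying $L+\bb c^*$ with $I-(A-bc^*)\mydot z$'s companion. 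Since $\det \hL(\Omega^{(n)})=1$, this gives $\det f^{-1}(\Omega^{(n)})=1/\det L(\Omega^{(n)})$, i.e. $\det f(\Omega^{(n)})=\det L(\Omega^{(n)})$.

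For part \ref{FM2} (now $\de=1$), I would invoke Remark~\ref{r:facts}\ref{it:rf1}: minimality of $I+c^*L^{-1}\bb$ gives the implications ``$c^*v=0$ and $v\in\ker A\Rightarrow v=0$'' and ``$v^*b=0$ and $v\in\ker A^*\Rightarrow v=0$''. Suppose $v\in\ker A$ is nonzero. Here $c,b_j\in\C^{d\times 1}$ are column vectors (since $\de=1$). The point is that $f$ is a polynomial, so from the minimal realization $f=I-c^*(I-(A-bc^*)\mydot z)^{-1}\bb$ the matrices $A_j-b_jc^*$ (note $b\mydot z=\bb$ written with the $2g$ variables) are jointly nilpotent. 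I would argue that if $v\in\ker A$, then $(A_j-b_jc^*)v=-b_j(c^*v)$, and I want to show $c^*v=0$, which by the minimality implication forces $v=0$, contradiction. To get $c^*v=0$: if $c^*v\ne0$, consider the subspace $\Span\{b_j: j\}$ — actually the cleaner route is to note that joint nilpotency of $\{A_j-b_jc^*\}$ plus $v\in\ker A$ means $\{A_j-b_jc^*\}$ acts on $v$ producing vectors in $\Span\{b_j\}$, and iterating, the orbit of $v$ lies in $\C v+\Span\{b_j\}$ only if further relations hold; I would instead use the controllability side. Controllability says $\Span\{A^w b_j u\}=\C^d$. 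Since $\de=1$, this is $\Span\{A^w b_j\}=\C^d$. If $v\in\ker A$ then $v^*A^wb_j=0$ for every nonempty word $w$, so $v\perp A^wb_j$ for all $|w|\ge1$; controllability then forces $v$ not perpendicular to some $b_j$ (the $|w|=0$ terms), i.e. $v^*b\ne 0$ — but this is about $\ker A$, and the relevant adjoint statement is cleaner: taking adjoints, $\ker A^*=\{0\}$ follows symmetrically from observability. So: observability $\Span\{(A^*)^wc\}=\C^d$ immediately gives $\ker A^*=\{0\}$ (if $A^*v=0$ then $v\perp (A^*)^wc$ for $|w|\ge 1$, forcing $v\not\perp c$, but... ) — here I need the joint-nilpotency of $\hL$'s coefficients to close the gap, since $\ker A^*=\{0\}$ is not automatic from observability alone. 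The correct mechanism: joint nilpotency of $\{A_j-b_jc^*\}$ means they have a common kernel vector $w\ne0$; then $(A_j-b_jc^*)w=0$, so $A_jw=b_j(c^*w)$ lies in $\Span\{b_j\}$, and one shows $c^*w\ne0$ (else $w\in\ker A$ and $c^*w=0$ contradicts minimality). Rescaling, $A_jw=b_j$ for all $j$ (taking $c^*w=1$), hence $b=Aw\in\ran A$, so $A^wb=A^{w}\cdot A w\in\ran A$ for all $w$, and combined with controllability $\Span\{A^wb, \ldots\}=\C^d$... this needs the $\de=1$ constant term handled carefully. I would similarly handle $\ker A$ using the nilpotent common kernel of the transposed family $\{A_j^*-c b_j^*\}$ coming from applying the same reasoning to $(f^*)^{-1}$ or $f^{-\top}$.

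For part \ref{it:indec}, I would combine part \ref{FM1} with the irreducibility criteria. By Proposition~\ref{p:irrpoly}, $f$ is an atom iff $\det f(\Omega^{(n)})$ is irreducible for all large $n$; by part \ref{FM1} this equals $\det L(\Omega^{(n)})$ for all $n$. By Lemma~\ref{l:irr}, $L$ is irreducible iff $\ker A=\ker A^*=\{0\}$ and $\det L(\Omega^{(n)})$ is irreducible for all large $n$. Since $\de=1$, part \ref{FM2} supplies $\ker A=\ker A^*=\{0\}$ for free, so the two conditions coincide: $L$ irreducible $\iff$ $\det L(\Omega^{(n)})$ irreducible for large $n$ $\iff$ $\det f(\Omega^{(n)})$ irreducible for large $n$ $\iff$ $f$ is an atom.

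\textbf{Main obstacle.} The hard part will be part \ref{FM2}: extracting $\ker A=\ker A^*=\{0\}$ from minimality of the realization of $f^{-1}$ together with the fact that $f$ itself is a polynomial. The subtlety is that observability/controllability alone do \emph{not} force these kernels to vanish — one genuinely needs to exploit that the ``inverse pencil'' $I-(A-bc^*)\mydot z$ has jointly nilpotent coefficients (Remark~\ref{r:facts}\ref{it:rf4}), and to carefully track how the rank-one ($\de=1$) perturbation $bc^*$ interacts with $\ker A$. I expect the cleanest argument routes through: a common kernel vector $w$ of $\{A_j-b_jc^*\}$ must satisfy $c^*w\ne0$ (by minimality), hence after scaling $Aw=b$; feeding this back into the controllability span $\Span\{A^wb_ju\}$ and using $\de=1$ to conclude, then dualizing via observability for $\ker A^*$.
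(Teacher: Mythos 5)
Your plans for parts \ref{FM1} and \ref{it:indec} match the paper's proof: for \ref{FM1}, the determinantal identity $\det(I+c^*L^{-1}\bb)=\det(L+\bb c^*)/\det L$ together with joint nilpotency of the coefficients $N_j=A_j-b_jc^*$ (via Remarks~\ref{r:facts}\ref{it:rf5} and~\ref{it:rf4}); for \ref{it:indec}, exactly the chain through Proposition~\ref{p:irrpoly} and Lemma~\ref{l:irr} with \ref{FM2} supplying the kernel conditions.

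Part \ref{FM2} is where your draft wanders, and I want to flag one step that is simply false. You wrote ``If $v\in\ker A$ then $v^*A^wb_j=0$ for every nonempty word $w$.'' This does not follow: for a nonempty word $w=z_{j_1}\cdots z_{j_m}$ one has $v^*A^wb_j=v^*A_{j_1}\cdots A_{j_m}b_j$, and killing the leading factor requires $v^*A_{j_1}=0$, i.e.\ $v\in\ker A^*$, not $v\in\ker A$. You noticed something was off and pivoted, so this does not infect your final argument, but as stated the sentence would not survive. Likewise the opening ``I want to show $c^*v=0$'' is backwards: minimality forces $c^*v\neq0$ for nonzero $v\in\ker A$, and that nonvanishing is exactly what both the paper and your eventual argument exploit.

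Your ``correct mechanism'' for \ref{FM2} is right and is a genuinely different route from the paper's. The paper starts from a hypothetical nonzero $v\in\ker A$, uses $N_jv=-(c^*v)b_j$ and $c^*v\neq0$ to get $b_j\in\ran N_j$, then picks a common \emph{cokernel} vector $u$ of the jointly nilpotent $N_j$ and contradicts controllability of the realization of $f$ (the one with pencil $I-N\mydot z$). You instead start from a common \emph{kernel} vector $w_0$ of the $N_j$ (which exists unconditionally), show $c^*w_0\neq 0$ via observability of the $f^{-1}$ realization, deduce $b_j=A_jw_0\in\ran A_j$, and then observe that controllability of the $f^{-1}$ realization forces $\C^d=\Span\{A^wb_j\}\subseteq\sum_k\ran A_k$, hence $\ker A^*=\{0\}$; the statement $\ker A=\{0\}$ then follows by the dual argument with the common kernel of $N_j^*=A_j^*-cb_j^*$, producing $c\in\ran A_k^*$ for some $k$ and concluding $\sum_k\ran A_k^*=\C^d$ from observability. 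Both arguments lean on the same three ingredients (joint nilpotency of $N$, observability, controllability), but the paper derives a contradiction against minimality of the polynomial realization while you extract a direct range equality from minimality of the $f^{-1}$ realization; your version is arguably a bit more transparent about which space becomes all of $\C^d$, whereas the paper's contradiction argument is shorter to write out. Either is a complete proof once the false detour is excised.
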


\begin{proof}

(1) By the well-known determinantal identity $\det(M+uv^*)=\det(I+v^*M^{-1}u)\det M$ for an invertible $M$,
$$\det L(Z)\, \det f(Z)^{-1}
=\det\big( (L+\bb c^*)(Z) \big)
$$
for every $Z$ 
with $\det f(Z)\neq0$. By Remark \ref{r:facts}\ref{it:rf4}, 
$N_j:=A_j-b_jc^*,$ the coefficients of $L+\bb c^*,$ are the coefficients
in a minimal realization of the polynomial $f.$ By Remark \ref{r:facts}\ref{it:rf4}
, the $N_j$ are jointly nilpotent. 
Hence  $\det f(\Omega^{(n)})=\det L(\Omega^{(n)})$ for all $n$.

(2) 
If $0\neq v\in \ker A$, then 
\[
N_jv=-(c^*v)b_j,
\]
and $c^*v\in\C\setminus\{0\}$ by Remark \ref{r:facts}\ref{it:rf1}. Hence $b_j\in\ran N_j$.
Since the $N_j$ are jointly nilpotent, there exists a nonzero vector $u$ such that
$u^*N_j=0$. Hence $u^*b_j=0$. By Remark \ref{r:facts}\ref{it:rf1},
the FM realization \ref{e:fmi} is not minimal, contradicting
Remark \ref{r:facts}\ref{it:rf5}.

A similar line of reasoning shows that $\ker A^*=\{0\}$. 
If $v^*A_j=0$ and $N_ju=0$, then $-v^* b_j c^*u=0$. 
By minimality, there is a $k$ such that $v^* b_k\ne 0$. Hence
$c^*u=0$ and thus $A_ju=0$, contradicting minimality.

(3) Let $f$ be an \irr. By Proposition \ref{p:irrpoly}, $\det L(\Omega^{(n)})=\det f(\Omega^{(n)})$ is an irreducible polynomial for all $n$ large enough. 
Hence
$L$ is \irrL 
by Lemma \ref{l:irr} and (2).
Conversely,  if $L$ is \irrL, then $\det f(\Omega^{(n)})=\det L(\Omega^{(n)})$ is an irreducible polynomial for all $n$ large enough by Lemma \ref{l:irr}. 
Therefore $f$ is an \irr by Proposition \ref{p:irrpoly}.
\end{proof}

\section{Proof of Theorem \ref{thm:main}}
\label{sec:pfmain}

We start the proof of Theorem \ref{thm:main}
with a lemma.

\begin{lem}\label{l:exp}
Suppose  $\rr\in\rxx\setminus\C$ is defined at the origin and $\rr(0)=1$. Assume that $\rr$ is hermitian and $\rr=1+c^* L^{-1}\bb$ is a minimal FM realization, where $\bb=\sum_j \widecheck{b}_jx_j+\sum_j \widehat{b}_jx_j^*$. If $L$ is \irrL and monic hermitian, say $L= I- A\mydot x- A^*\mydot x^*$,
 then there exists $\lambda\in\R\setminus\{0\}$ such that 
$$\widecheck{b}_j=\lambda A_jc \ \ and \  
\ \widehat{b}_j= \lambda A_j^*  c \ \ \ \ 
 for \ all \ j=1,\dots,g.$$
\end{lem}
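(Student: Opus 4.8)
The plan is to exploit the hermitian symmetry of $\rr$ together with the uniqueness (up to change of basis) of minimal FM realizations, as recorded in Remark~\ref{r:facts}\ref{it:rf2}. First I would write down a second minimal realization of $\rr$ obtained from the given one by applying the involution. Since $\rr=\rr^*$ and $\rr=1+c^*L^{-1}\bb$ with $L=I-A\mydot x-A^*\mydot x^*$ hermitian, taking adjoints gives $\rr=1+\bb^* (L^*)^{-1} c = 1+ \widetilde c^{\,*} \widetilde L^{-1} \widetilde\bb$ for a suitable realization with state-space operators built from $\bb$ and $L^*=L$ (after transposing/conjugating to put it back in FM form); concretely, $\rr^*$ has a natural realization of the same size $d$, so by minimality of the original realization this ``adjoint realization'' is again minimal. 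Then Remark~\ref{r:facts}\ref{it:rf2} supplies an invertible $S\in\mat d$ intertwining the two realizations: $SA=A^*S$ (or $SA=AS$ after accounting for how the hermitian pencil transforms), $S\widecheck b_j = $ the corresponding coefficient of $\widetilde\bb$, and $c^*S^{-1}=\widetilde c^{\,*}$.

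The next step is to read off what these intertwining relations say about $\widecheck b_j$, $\widehat b_j$ and $c$. Matching the $x_j$- and $x_j^*$-parts of $\bb$ under the involution should identify $\widehat b_j$ with (a conjugate of) $\widecheck b_j$ transported by $S$, and similarly relate $c$ to $\bb$. Combining the relation coming from $c$ with the one coming from $\bb$, I expect to deduce that $\widecheck b_j$ is a scalar multiple of $A_j c$ and $\widehat b_j$ the same scalar multiple of $A_j^* c$, with the scalar $\lambda$ forced to be real because $S$ can be taken hermitian (the realization and its adjoint are related by a hermitian $S$, since conjugating twice returns the original) and because $\rr$ is scalar-valued ($\de=1$), which pins the ambiguity down to a single real parameter. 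That $\lambda\neq 0$ follows because $\rr$ is non-constant, so $\bb\neq 0$, hence the $\widecheck b_j$ are not all zero; and irreducibility of $L$ via Proposition~\ref{p:FM}\ref{FM2} gives $\ker A=\{0\}$, so $A_jc\neq 0$ for some $j$ (using also the minimality consequence $c^*v=0,\ v\in\ker A\Rightarrow v=0$, i.e.\ $c\neq 0$).

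The main obstacle I anticipate is bookkeeping: turning ``$\rr$ is hermitian'' into a genuine \emph{second minimal FM realization} in the standard form \eqref{e:fm} and correctly tracking how the hermitian pencil $L$, the tuple $\bb$ split into its $\widecheck b$- and $\widehat b$-parts, and the vector $c$ transform under the state-space isomorphism $S$. In particular one must be careful that the isomorphism intertwines $A$ with $A$ (not merely $A$ with $A^*$), so that the conclusion is stated with $A_jc$ and $A_j^*c$ rather than some transpose; this is where the argument that $S$ may be chosen hermitian, and the reduction to the scalar case $\de=1$, do the real work. Once the correct pair of realizations is in hand, the extraction of $\lambda$ and the verification $\lambda\in\R\setminus\{0\}$ are short linear-algebra steps.
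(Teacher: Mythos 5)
Your plan rests on producing a \emph{second} minimal FM realization of $\rr$ from the identity $\rr=\rr^*=1+\bb^*L^{-1}c$ and then invoking the state-space isomorphism theorem (Remark~\ref{r:facts}\ref{it:rf2}). There is a genuine gap here. The expression $1+\bb^*L^{-1}c$ is \emph{not} an FM realization: in the format \eqref{e:fm} the constant vector sits on the left and the linear form on the right, whereas after taking adjoints the roles have swapped. Your proposed fix --- ``transposing/conjugating to put it back in FM form'' --- does not work, because over the noncommutative ring $\px$ matrix transposition is not an anti-homomorphism (for $1\times 1$ blocks with noncommuting entries $a,b$ one has $(ab)^T=ab\neq ba=b^Ta^T$), so the manipulation $(\bb^*L^{-1}c)^T=c^T(L^T)^{-1}(\bb^*)^T$ that one would use in the commutative setting simply fails. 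Without a bona fide second FM realization, Remark~\ref{r:facts}\ref{it:rf2} cannot be applied. Moreover, even if one could conjure up an intertwiner $S$, the intertwining relations would relate $\widecheck b_j$ to $\widehat b_j$ and $c$ to itself through $S$; it is not at all clear how such relations alone would force $\widecheck b_j$ to be a scalar multiple of $A_jc$ --- one would still need to exploit the spanning consequence of irreducibility in some essential way.

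The paper's actual argument sidesteps the realization-uniqueness machinery entirely. It equates the two formal power series $1+c^*L^{-1}\bb$ and $1+\bb^*L^{-1}c$ coefficient-by-coefficient, obtaining for every word $w\in\mxx$ and all indices $j,k$ the relations
\[
c^*A_k\,w(A,A^*)\,\widecheck b_j=\widehat b_k^*\,w(A,A^*)\,A_jc,\qquad
c^*A_k^*\,w(A,A^*)\,\widecheck b_j=\widecheck b_k^*\,w(A,A^*)\,A_jc,\qquad
c^*A_k\,w(A,A^*)\,\widehat b_j=\widehat b_k^*\,w(A,A^*)\,A_j^*c.
\]
Since $L$ is irreducible, the matrices $w(A,A^*)$ span $\mat d$; the elementary linear-algebra fact that $v_1^*Mv_2=v_3^*Mv_4$ for all $M\in\mat d$ forces $v_1\parallel v_3$ and $v_2\parallel v_4$ then gives the collinearities $\widecheck b_j=\lambda^1_{jk}A_jc$, $\widehat b_k=\overline{\lambda^1_{jk}}A_k^*c$, etc. A short bookkeeping argument (using minimality to ensure some $\widecheck b_\ell$ or $\widehat b_\ell$ is nonzero) then pins all these scalars down to a single $\lambda$ and shows $\lambda\in\R\setminus\{0\}$. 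I recommend following this route: the irreducibility enters precisely through the spanning property, and the power-series comparison is what makes the role of $A_jc$ and $A_j^*c$ appear explicitly.
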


\begin{proof}
Since $\rr$ is hermitian, the comparison of formal power series expansions of $1+c^* L ^{-1} \bb$ and $1+\bb^* L ^{-1} c$ yields
\begin{align}
\label{e:1} c^* A_k\, w(A,A^*)\,\widecheck{b}_j & = \widehat{b}_k^*\, w(A,A^*)\,A_jc \\
\label{e:2} c^* A_k^*\, w(A,A^*)\,\widecheck{b}_j &= \widecheck{b}_k^*\, w(A,A^*)\,A_jc \\
\label{e:3} c^* A_k\, w(A,A^*)\,\widehat{b}_j &= \widehat{b}_k^*\, w(A,A^*)\,A_j^*c
\end{align}
for all $w\in\mxx$ and $1\le j,k\le g$. Since $L$ is \irrL, 
the matrices $w(A,A^*),$ for $w\in\mxx,$ span $\mat{d}$.
It is easy to see that if $v_1,v_2,v_3,v_4\in\C^d$ satisfy
$$v_1^*Mv_2=v_3^*Mv_4\qquad \text{for all }M\in\mat{d},$$
then $v_1$ and $v_3$ are collinear, and $v_2$ and $v_4$ are collinear. Hence by \eqref{e:1},\eqref{e:2},\eqref{e:3} and the fact that $w(A,A^*)$ span $\mat{d}$, there exist $\lambda_{jk}^1,\lambda_{jk}^2,\lambda_{jk}^3\in\C$ such that
\begin{alignat}{3}
\label{e:4} \widecheck{b}_j &=\lambda_{jk}^1 A_jc,&\qquad \widehat{b}_k&=\overline{\lambda_{jk}^1} A_k^*c \\
\nonumber
\widecheck{b}_j &=\lambda_{jk}^2 A_jc,&\qquad \widecheck{b}_k&=\overline{\lambda_{jk}^2} A_kc \\
\label{e:5}
\widehat{b}_j &=\lambda_{jk}^3 A_j^*c,&\qquad \widehat{b}_k&=\overline{\lambda_{jk}^3} A_k^*c
\end{alignat}
for all $j,k$. By minimality there exists $\ell$ such that $\widecheck{b}_{\ell}\neq0$ or $\widehat{b}_{\ell}\neq0.$  By symmetry we may assume $\widehat{b}_{\ell}\neq0$.

Since $\widehat{b}_\ell\ne 0$, equation \eqref{e:4} implies $\lambda:=\lambda_{j\ell}^1\ne 0$ 
is independent of $j$.  It also implies $A_\ell^* c\ne 0$.
Likewise, by equation \eqref{e:5}, $\lambda_{j\ell}^3$ is independent of $j$ and 
$\lambda_{j\ell}^3=\lambda_{j\ell}^1=\lambda$. 
By equation \eqref{e:5}, $\widehat{b}_\ell = \lambda A^*_\ell c = \overline{\lambda} A^*_\ell c$. Thus  $\lambda\in\R\setminus\{0\}$.
Finally,  from equation \eqref{e:4}, 
$\widecheck{b}_j =\lambda A_jc$ and $\widehat{b}_j = \lambda A_j^*c$
as desired.
\end{proof}

\begin{prop}
\label{p:fL}
 Suppose $f\in\px$ is a hermitian atom and $f^{-1}=1+c^*L^{-1}\bb$ is a minimal FM realization.
 If $L$ is hermitian, then $f$ is concave, has degree at most two
 and is a Schur complement of $L$. Further, $f(X,X^*)\succ 0$ if and only if $L(X,X^*)\succ 0$.
\end{prop}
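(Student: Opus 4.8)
To prove the proposition I would first reduce it to a single identity for $f^{-1}$ and then read off all four conclusions. Since $f$ is a hermitian \irr, Proposition~\ref{p:FM}\ref{it:indec} shows $L$ is \irrL, and Proposition~\ref{p:FM}\ref{FM1} gives $\det f(\Omega^{(n)})=\det L(\Omega^{(n)})$ for all $n$, whence $\cZ_f=\cZ_L$ and so $\cK_f=\cD_L$ by Proposition~\ref{p:DZ}\ref{i:DZ1}; in particular $\cD_f=\cD_L$ is proper (as $f$ is non-constant), $\interior\cD_L$ is disjoint from $\cZ_f$, and $\partial\cD_f=\partial\cD_L$. As $L$ is \irrL and monic hermitian, Lemma~\ref{l:exp} applies to $f^{-1}=1+c^*L^{-1}\bb$ and produces $\lambda\in\R\setminus\{0\}$ with $\bb=\lambda(I-L)c$; substituting and using $L^{-1}(I-L)=L^{-1}-I$ yields
\[
f^{-1}=\mu+\lambda\,c^*L^{-1}c,\qquad \mu:=1-\lambda\,c^*c,
\]
and $\mu+\lambda c^*c=f^{-1}(0)=1$.

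Next I would establish $\lambda>0$ and $\mu=0$. For $\lambda>0$: join $0$ to a point of $\partial\cD_L$ by a path in $\cD_L$; on the interior of the path $L$ is positive definite, so $c^*L^{-1}c$ is positive definite there, and since the realization is minimal, $f^{-1}$ has a genuine pole at the endpoint (Remark~\ref{r:facts}\ref{it:rf3}), forcing $\|c^*L^{-1}c\|\to\infty$ there. If $\lambda<0$ then $f^{-1}=\mu I+\lambda c^*L^{-1}c\preceq\mu I$ along the path, so the pole drives its smallest eigenvalue to $-\infty$; as $f^{-1}(0)=I\succ0$, the smallest eigenvalue of $f^{-1}$ vanishes at some interior point, i.e.\ $f^{-1}$ is singular there — impossible, since on $\interior\cD_L\subseteq\cZ_f^{\,\rm c}$ the matrix $f^{-1}=f^{-1}$ is invertible. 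Hence $\lambda>0$.

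The proof that $\mu=0$ is the technical heart. By Remark~\ref{r:facts}\ref{it:rf5} the polynomial $f$ itself has minimal realization $f=1-\lambda\,c^*(I-MQ)^{-1}Mc$, where $M:=I-L$ and $Q:=I-\lambda cc^*$, and because $f$ is a polynomial the pencil $I-MQ$ is nilpotent (Remark~\ref{r:facts}\ref{it:rf4}). If $\mu\ne0$ then $Q$ is invertible, and one plays this nilpotency against the irreducibility of $L$ and the self-adjointness $f=f^*$ (equivalently, against the second minimal realization $f=1-\lambda\,(Mc)^*(I-QM)^{-1}c$ obtained by applying the involution) to reach a contradiction. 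This forces $\lambda\,c^*c=1$, i.e.\ $\mu=0$, so that $Q$ is the orthogonal projection onto $c^{\perp}$.

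With $\mu=0$ the endgame is short. Choose an orthonormal basis of $\C^d$ with first vector $c/\|c\|$ and write $L=\bigl(\begin{smallmatrix}L_{11}&L_{12}\\ L_{12}^*&L_{22}\end{smallmatrix}\bigr)$ in the corresponding block form. Then $\lambda c^*c=1$ gives $f^{-1}=\lambda\,c^*L^{-1}c=[L^{-1}]_{11}=(L_{11}-L_{12}L_{22}^{-1}L_{12}^*)^{-1}$, so $f=L_{11}-L_{12}L_{22}^{-1}L_{12}^*$ is the Schur complement of $L$ with respect to $L_{22}$. The determinant Schur identity $\det L(\Omega^{(n)})=\det L_{22}(\Omega^{(n)})\,\det f(\Omega^{(n)})$ together with $\det L(\Omega^{(n)})=\det f(\Omega^{(n)})$ forces $\det L_{22}(\Omega^{(n)})\equiv1$, so $L_{22}$ is a nilpotent monic pencil (Remark~\ref{r:facts}\ref{it:rf4} and \cite[Proposition 3.3]{KV}). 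But the coefficient matrices of $L_{22}$ form a family closed under $\,{}^*\,$ and jointly nilpotent, so for any such coefficient $T$ the matrix $TT^*$ is positive semidefinite and is a product of two members of the family, hence nilpotent, hence $0$, hence $T=0$; therefore $L_{22}=I$. Consequently $f=L_{11}-L_{12}L_{12}^*$, which has degree at most two; it is concave, since (as $L_{12}$ is purely linear) its second directional derivative in any direction $h$ equals $-2\,L_{12}(h)\,L_{12}(h)^*\preceq0$. Finally, since $L=\bigl(\begin{smallmatrix}L_{11}&L_{12}\\ L_{12}^*&I\end{smallmatrix}\bigr)$, the Schur complement criterion for positive definiteness gives $L(X,X^*)\succ0\iff f(X,X^*)=L_{11}(X,X^*)-L_{12}(X,X^*)L_{12}(X,X^*)^*\succ0$. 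As signalled, the one genuinely hard point is $\mu=0$; once it is in hand, the remaining assertions are bookkeeping with minimal realizations, Schur complements, and the elementary fact that a self-adjoint element of a $\,{}^*\,$-closed jointly nilpotent family of matrices must vanish.
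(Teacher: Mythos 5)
Your overall strategy parallels the paper's proof closely: apply Lemma~\ref{l:exp} to normalize $\bb$, pass to the minimal realization of $f$ itself via Remark~\ref{r:facts}\ref{it:rf5}, exploit that the resulting pencil must have jointly nilpotent coefficients since $f$ is a polynomial, and deduce that $I-\lambda cc^*$ is singular (your ``$\mu=0$''). However, the step you yourself single out as ``the technical heart'' is not actually proved. You write that ``one plays this nilpotency against the irreducibility of $L$ and the self-adjointness $f=f^*\ \dots$ to reach a contradiction,'' but no argument is supplied. This is a genuine gap: everything else you do is bookkeeping around this point. The paper's argument for this step is short and clean and, notably, does not need $f=f^*$ at this stage: the jointly nilpotent coefficients $A_jQ,\,A_j^*Q$ (with $Q=I-\lambda cc^*$) necessarily have a nonzero common kernel vector $v$ (simultaneous strict upper-triangularization); then $Qv\in\bigcap_j(\ker A_j\cap\ker A_j^*)=\{0\}$ because $L$ is \irrL and the $A_j,A_j^*$ generate $\mat{d}$; hence $Qv=0$ and $Q$ is singular, i.e.\ $\lambda c^*c=1$. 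You should make this explicit rather than gesture at a contradiction.

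Two smaller remarks. First, the separate path-lifting argument you give for $\lambda>0$ is superfluous: once $\lambda c^*c=1$ is known, $\lambda>0$ is immediate since $c^*c>0$; and the path argument itself is somewhat delicate (controlling the minimum eigenvalue as one approaches a boundary pole), so dropping it both shortens and strengthens the proof. Second, your endgame differs from the paper's. After showing $Q$ is the rank-one projection off $c$, the paper writes $A$ in block form against $\C c\oplus c^\perp$, deduces directly that $AQ,A^*Q$ are nilpotent of order two, and expands $(I-AQ\mydot x-A^*Q\mydot x^*)^{-1}$ termwise to get $f$ explicitly as a Schur complement. You instead invoke the Schur determinant identity together with $\det L(\Omega^{(n)})=\det f(\Omega^{(n)})$ (Proposition~\ref{p:FM}\ref{FM1}) and \cite[Proposition 3.3]{KV} to force $L_{22}=I$ via the $*$-closed jointly nilpotent family argument. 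Both routes are valid; yours is slightly more conceptual, the paper's more computational and self-contained. Either way, the proof is incomplete until the $Q$-singular step is actually written out.
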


\begin{proof}
Since $L$ is hermitian, it has the form $L=I-A\mydot x - A^* \mydot x^*$.
Since $f$ is an atom and the realization $f^{-1}=I+c^* L^{-1}\bb$ is minimal,
 $L$ is \irrL by Proposition \ref{p:FM}\ref{it:indec}. Since $f$ is hermitian,
 so is $f^{-1}$. Thus,  by Lemma \ref{l:exp}  we may assume that
$$\bb= \ve(A\mydot x+A^*\mydot x^*)c$$
for $\ve\in\{-1,1\}$. By Remark \ref{r:facts}\ref{it:rf5}, $f$ admits a minimal realization
\begin{equation}\label{e:real2}
f=1 - \ve c^*\Big( I-A(I-\ve cc^*)\mydot x-A^*(I-\ve cc^*)\mydot x^* \Big)^{-1}(A\mydot x+A^*\mydot x^*)c.
\end{equation}
Since $f$ is a polynomial, the $A_j(I-\ve cc^*),A_j^*(I-\ve cc^*)$ are jointly nilpotent by Remark \ref{r:facts}\ref{it:rf4}. In particular, they have a nontrivial common kernel. Since $A_j,A_j^*$ generate $\mat{d}$, it follows that $P=I-\ve cc^*$ is singular, so in particular $\ve=1$. Since also $P$ is  hermitian 
and a rank-one perturbation of the identity, it is an orthogonal projection. 
After a unitary change of basis we assume that $P=0\oplus I_{d-1}$. Let
$$A=\begin{pmatrix} \alpha & v^* \\ u & \tA \end{pmatrix}$$
be the decomposition of $A$ with respect to this new basis. Then
$$AP=\begin{pmatrix} 0 & v^* \\ 0 & \tA \end{pmatrix},\qquad A^*P=\begin{pmatrix} 0 & u^* \\ 0 & \tA^* \end{pmatrix}$$
are jointly nilpotent, so $\tA,\tA^*$ are jointly nilpotent. Hence
$\tA_j^* \tA_j$ is nilpotent and thus  $\tA=0$.
It follows that  $AP,A^*P$ are jointly nilpotent of order at most two and
$$\Big(I-A(I-cc^*)\mydot x - A^*(I-cc^*)\mydot x^*\Big)^{-1}
=I+A(I-cc^*)\mydot x + A^*(I-cc^*)\mydot x^*.$$
Now \eqref{e:real2} gives
\begin{align*}
f
&= 1 - c^*\Big(I+A(I-cc^*)\mydot x+A^*(I-cc^*)\mydot x^*\Big)\big(A \mydot x+A^* \mydot x^*\big)c \\
&= 1 - c^*(A\mydot x+A^*\mydot x^*)c-c^*(A\mydot x+A^*\mydot x^*)(I-cc^*)(A\mydot x+A^*\mydot x^*)c.
\end{align*}
Therefore $f$ has the form
$$f=1 -(\alpha\mydot x+\bar{\alpha} \mydot x^*)-(u\mydot x+v\mydot x^*)^*(u\mydot x+v\mydot x^*),$$
which is a Schur complement of
\[
L=I-\begin{pmatrix} \alpha & v^* \\ u & 0 \end{pmatrix}\mydot x
-\begin{pmatrix} \bar{\alpha} & u^* \\ v & 0 \end{pmatrix}\mydot x^*.
\]
In particular, $f$ is concave, has degree at most two and $f(X,X^*)\succ 0$ if and only if $L(X,X^*)\succ 0$.
\end{proof}

\begin{prop}
\label{p:DfDL}
  Suppose $f\in\px$ is a hermitian atom with $f(0)=1$ and $L$ is a minimal 
  hermitian monic pencil of size $d\ge 1$.
  If $\cD_f=\cD_L$, then $L$ is \irrL and
there exists $b_j,c\in \C^d$ such that $f^{-1}=I+ c^* L^{-1}\bb$ 
 is a minimal FM realization.
\end{prop}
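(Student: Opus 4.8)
The plan is to connect the hypothesis $\cD_f = \cD_L$ to a minimal FM realization of $f^{-1}$ by going through the free locus. First I would observe that since $f$ is a hermitian atom, Proposition \ref{p:irrpoly} tells us $\cZ_f(n)$ is irreducible for all large $n$; on the other hand, since $L$ is a minimal hermitian monic pencil, Proposition \ref{p:pd-min} (via Proposition \ref{p:DZ}\ref{i:DZ3}) gives $\cZ_f = \cZ_L$. Next I would take a minimal FM realization $f^{-1} = I + \gamma^* M^{-1} \beta$ with $M = I - N\mydot x$ a monic (not a priori hermitian) pencil of some size $e$. By Proposition \ref{p:FM}\ref{FM1}, $\det f(\Omega^{(n)}) = \det M(\Omega^{(n)})$ for all $n$, and by the same token $\det L(\Omega^{(n)}) = \det f(\Omega^{(n)})$ by Proposition \ref{p:FM}\ref{FM1} applied in reverse---wait, more carefully: I would use $\cZ_f = \cZ_L$ together with $\cZ_f = \cZ_M$ (the latter from Remark \ref{r:facts}\ref{it:rf3}, since $M$ is the pencil of a minimal realization) to conclude $\cZ_M = \cZ_L$.

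The crux is then to show that $M$ and $L$ have the same size and are in fact related by similarity, so that $M$ can be taken hermitian. Since $f$ is an atom, Proposition \ref{p:FM}\ref{it:indec} shows $M$ is \irrL, and $L$ is \irrL by Proposition \ref{p:DfDL}'s partner result---actually $L$ being minimal and $\cD_f = \cD_L$ with $f$ an atom forces $L$ irreducible by Proposition \ref{p:mp} (a minimal hermitian pencil is an orthogonal direct sum of irredundant irreducible blocks, and if there were more than one block the free locus would be reducible, contradicting irreducibility of $\cZ_f$). Now both $M$ and $L$ are irreducible pencils with $\cZ_M = \cZ_L$; I would invoke the result on equality of free loci for irreducible pencils (this is essentially \cite[Theorem 1.2]{HKM} / the uniqueness statement behind Proposition \ref{p:mp}, or \cite{HKV}) to conclude there is an invertible $S$ with $S N_j S^{-1}$ equal to the coefficients of $L$ up to the ambiguity allowed by the free locus; since $L$ is hermitian, this pins $M$ down to be similar to the hermitian pencil $L$. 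Then $\dim e = d$.

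Having $M$ similar to $L$ via $S$, I would transport the realization: $f^{-1} = I + \gamma^* M^{-1}\beta = I + \gamma^* S^{-1}(SMS^{-1})^{-1} S\beta$, and $SMS^{-1}$ has the same coefficients as $L$ (after the change of basis identifying them), so setting $c = (\gamma^* S^{-1})^*$ and $b$ appropriately from $S\beta$ gives $f^{-1} = I + c^* L^{-1}\bb$. Minimality is preserved because $S$ is invertible (the controllability/observability conditions of Remark \ref{r:facts}\ref{it:rf1} transform covariantly). One subtlety: $L$ is already presented as hermitian, so I must make sure the similarity $SMS^{-1}$ actually lands on $L$ itself and not merely on something unitarily equivalent to it; here I would use that any two minimal hermitian pencils with the same free spectrahedron are \emph{unitarily} equivalent (Proposition \ref{p:mp}), combined with the state space isomorphism theorem (Remark \ref{r:facts}\ref{it:rf2}) to absorb the remaining freedom.

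The main obstacle I anticipate is the identification step: showing that the possibly non-hermitian pencil $M$ from an arbitrary minimal realization is similar to the given hermitian $L$, purely from $\cZ_M = \cZ_L$ plus irreducibility. The free locus determines an irreducible pencil only up to a certain equivalence (simultaneous similarity together with the $L \mapsto \tL$ "flip" operation studied later in the paper, Section \ref{s:flip}), so I would need to rule out the flip possibility---or rather, absorb it, since applying the flip to a hermitian pencil can again be arranged to be hermitian, or one shows the realization obtained is still of the required form. Careful bookkeeping of which equivalence is in play, and checking that hermiticity can always be restored, is where the real work lies; everything else is a transport-of-structure argument along an invertible change of basis.
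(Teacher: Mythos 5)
Your overall strategy matches the paper's: establish $\cZ_f=\cZ_L$ via Proposition~\ref{p:DZ}\ref{i:DZ3}, deduce that both $L$ and the pencil $\tL$ in a minimal FM realization of $f^{-1}$ are \irrL, and then argue that $\tL$ is similar to $L$ so the realization can be conjugated to one built on $L$. The piece you lack---and the one that dissolves the hesitation in your final paragraph---is the precise reference closing the identification step: the paper invokes \cite[Theorem 3.11]{KV}, which states that two \irrL monic pencils with the same free locus have the same size and are \emph{similar} (by an invertible, not necessarily unitary, change of basis). There is no residual ambiguity to rule out. In particular, the transpose is not a hidden symmetry of free loci: for a monic pencil $M=I-N\mydot z$ one has $\cZ_{M^T}(n)=\{X:\, X^T\in\cZ_M(n)\}$, which in general differs from $\cZ_M(n)$, so passing to the transpose is not a free-locus-preserving operation that the theorem would have to quotient by. Once $\tL=P^{-1}LP$ is in hand, the realization transports directly as $f^{-1}=1+(P^{-*}\tc)^*L^{-1}(P\widetilde{\bb})$, and minimality is preserved since $P$ is invertible; your concern about whether the similarity ``lands on $L$ itself'' is moot because the cited theorem already produces similarity to $L$, and in any case transitivity of similarity would absorb any unitary discrepancy.

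A terminological correction: Section~\ref{s:flip} is not about a ``flip operation'' relating two pencils with equal free loci. Being \emph{flip-poly} is a structural property of a single pencil---its coefficients have the form $A_j=N_j+v_ju^*$ with $N_j$ jointly nilpotent---which characterizes exactly those pencils that can appear in a minimal FM realization of a genuine polynomial (rather than a properly rational function). It is not an equivalence relation and plays no role in the identification step you were worried about; invoking it there was a red herring. With \cite[Theorem 3.11]{KV} substituted in for the references you gestured at (\cite[Theorem 1.2]{HKM} and Proposition~\ref{p:mp} apply only to hermitian pencils and would not directly handle the a priori non-hermitian $\tL$), your outline is essentially the paper's proof.
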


\begin{proof}
Write
$L=I-A\mydot x-A^*\mydot x^*.$  By Proposition \ref{p:DZ}\ref{i:DZ3}, $\cZ_f=\cZ_L$. After a unitary change of basis we can assume that $L$ equals a direct sum of \irrL hermitian monic pencils $L^1,\dots,L^\ell$. Since $L$ is minimal, the pencils $L^1,\dots,L^\ell$ are pairwise unitarily non-similar by Proposition \ref{p:mp}. Therefore
\[
 \cZ_f(n)=\cZ_L(n)=\cZ_{L^1}(n)\cup\cdots\cup \cZ_{L^\ell}(n)
\]
is a union of $\ell$ distinct hypersurfaces for large $n$ by Lemma \ref{l:irr}. 
Since $f$ is an atom, Proposition \ref{p:FM}\ref{it:indec} implies $\ell=1.$
 Hence $L$ is \irrL. 

Let $f^{-1} = 1+\tc^* \tL ^{-1} \widetilde{\bb}$ be a minimal FM realization. Since $f$ is an atom, $\tL$ is \irrL by Proposition \ref{p:FM}\ref{it:indec},
and $\cZ_{\tL}=\cZ_f=\cZ_{L}$ by Proposition \ref{p:FM}\ref{FM1}. By \cite[Theorem 3.11]{KV}, the pencils $L$ and $\tL$ are of the same size $d$ and there exists $P\in\operatorname{GL}_d(\C)$ such that $\tL=P^{-1}LP$. Therefore $f^{-1}$ admits the minimal FM realization
\[
f^{-1}=1+c^* L ^{-1} \bb,
\]
where $\bb=P\widetilde{\bb}$ and $c=P^{-*}\tc$.
\end{proof}

Combining Propositions \ref{p:DfDL} and \ref{p:fL},
 and using  the fact that if $\cD_f$ is convex, then
 there is a minimal  hermitian monic pencil $L$
 such that $\cD_f=\cD_L$ \cite{HM12}, proves a bit
more than claimed in  Theorem \ref{thm:main}. 

\begin{cor}
\label{c:main+}
Suppose $f\in\px$ is a hermitian atom  and  $f(0)\succ 0$. 
If $\cD_f$ is proper and convex, then $f$ has degree two and is concave.

Further, normalizing $f(0)=1$, if  $L$ is a minimal 
hermitian monic pencil such that $\cD_f=\cD_L$, 
then $L$ is \irrL, $f$ is a Schur complement of $L$ and there
exist vectors $c,b_1,\dots, b_{2g}$ such that 
\[
 f^{-1} = 1+ c^* L^{-1}\bb
\]
is a minimal FM realization.
\end{cor}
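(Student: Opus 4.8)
The plan is to deduce Corollary \ref{c:main+} as a direct synthesis of Propositions \ref{p:DfDL} and \ref{p:fL} with the Helton--McCullough theorem \cite{HM12} already invoked above; beyond a normalization and one remark about sizes, nothing new is needed.

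First I would reduce to $f(0)=1$. Since $\de=1$, the hypothesis $f(0)\succ0$ just says $f(0)$ is a positive real number, so passing from $f$ to $f(0)^{-1}f$ leaves $\cD_f=\cK_f$, the degree of $f$, its hermitianness and its concavity untouched, and preserves being an atom (the rescaled polynomial is an associate of $f$ in $\px$). So assume $f(0)=1$; being a hermitian atom, $f$ is then non-constant.

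Since $\cD_f$ is convex, \cite{HM12} (used already in the proof of Theorem \ref{thm:main2}) yields a minimal hermitian monic pencil $L$ with $\cD_f=\cD_L$. Properness of $\cD_f$ forces $\cD_L\neq\cD_I$, hence $L$ has size $d\ge1$ -- exactly the standing hypothesis of Proposition \ref{p:DfDL}. That proposition then gives that $L$ is \irrL and furnishes $b_1,\dots,b_{2g},c\in\C^d$ for which $f^{-1}=1+c^*L^{-1}\bb$ is a minimal FM realization. This realization has hermitian $L$, so Proposition \ref{p:fL} applies to it and delivers: $f$ is concave, has degree at most two, is a Schur complement of $L$, and $f(X,X^*)\succ0$ precisely when $L(X,X^*)\succ0$. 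With $f(0)$ normalized to $1$ these are verbatim the ``Further'' assertions of the corollary; and since the rescaling $f(0)^{-1}f\mapsto f$ changes neither degree nor concavity, the first sentence follows too. (Tracing the proof of Proposition \ref{p:fL} shows the quadratic part of $f$ vanishes exactly when $d=1$ -- the linear case, in which $f$ is trivially its own Schur complement -- and that $\deg f=2$ as soon as $d\ge2$.)

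The substance of all this lies upstream: in \cite{HM12} (convexity forces a spectrahedral description), in the realization machinery of Section \ref{sec:prelim}, and in the rigidity results that an \irrL pencil is determined up to conjugation by its free locus and that minimal hermitian monic pencils are unique up to unitary equivalence (\cite{KV,HKM}), all packaged into Propositions \ref{p:DfDL} and \ref{p:fL}. At the level of the corollary itself there is no real obstacle; the only thing requiring attention is that the two propositions and \cite{HM12} all speak about \emph{the same} minimal hermitian monic pencil, which they do, and that properness is what legitimizes the size-$\ge1$ hypothesis of Proposition \ref{p:DfDL}.
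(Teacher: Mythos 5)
Your proof is correct and is essentially the paper's own argument: the paper proves the corollary in a single sentence by combining Propositions \ref{p:DfDL} and \ref{p:fL} with the Helton--McCullough result \cite{HM12}, exactly as you do. Your parenthetical remark is a careful touch worth keeping: Proposition \ref{p:fL} only yields degree \emph{at most} two, and indeed when the minimal pencil $L$ has size $d=1$ the resulting $f$ is affine linear (so the stated ``degree two'' is really ``degree at most two,'' matching Theorem \ref{thm:main} rather than strictly improving on it), a point the paper's one-line proof glosses over.
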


\begin{rem}
The properness in Corollary \ref{c:main+} ensures that a minimal hermitian monic pencil for $\cD_f$ has size at least 1, so Proposition \ref{p:DfDL} applies. For the description of $f\in\px$ satisfying $f\succ0$ globally, see \cite[Remark 5.1]{KPV17}.	\qed
\end{rem}

\begin{rem}
	From the proof of Theorem \ref{thm:main}  we also obtain a bound on $d$, the size of $L$.
	Since $\tA=0$, the lower right $(d-1)\times (d-1)$ entries in the $\C$-algebra
	generated by $A$ and $A^*$ are spanned by $S=\{s t^*: s,t\in \{u_1,\dots,u_g,v_1,\dots,v_g\}\}$.
	Since $L$ is \irrL, this span is all of $\mat{d-1}$ and hence $(d-1)^2$ is
	at most the maximal cardinality of $S$, namely, $(2g)^2$. Hence  $d\le 2g+1$.
\qed
\end{rem}

\section{Proof of Theorem \ref{thm:main2} and algorithms: Corollary \ref{cor:algo12}}\label{sec:algo}

In this section we prove Theorem \ref{thm:main2} and explore algorithmic consequences.
In particular, we present, stated as Corollary \ref{cor:algo12},
 a constructive version of the main result of \cite{HM12}. 

\subsection{Proof of Theorem \ref{thm:main2}}\label{ssec:pf12}
It suffices to prove item \ref{it:spec} implies item \ref{it:midInv}.
Let $L$ be the pencil appearing in a minimal FM realization for $f^{-1}$, and let $L^1,\dots,L^\ell$ be its diagonal blocks as in \eqref{e:triang}. By Remark \ref{r:facts}\ref{it:rf3},
$\cK_f=\cK_L$. By assumption there exists a minimal hermitian monic pencil $\tL$ such that $\cK_L=\cD_{\tL}$. By $\partial \cK_L(n)$ we denote the topological boundary of $\cK_L(n)$. Thus
$$\cZ_L(n)\supseteq\partial \cK_L(n)=\partial \cD_{\tL}(n)$$
for every $n$.

For $S\subseteq \mat{n}^g$ let $\zar{S}$ denote its Zariski closure. For $n$ sufficiently large,
$$\cZ_L(n)\supseteq\zar{\partial \cK_L(n)}=\zar{\partial \cD_{\tL}(n)}=\cZ_{\tL}(n)$$
by Proposition \ref{p:pd-min}. Note that $\cZ_{L}(n)$ and $\cZ_{\tL}(n)$ are hypersurfaces. Therefore the set of irreducible components of $\cZ_{L}(n)$ contains the set of irreducible components of $\cZ_{\tL}(n)$. Since
$$\cZ_L=\cZ_{L^1}\cup\cdots\cup \cZ_{L^\ell}$$
and the $\cZ_{L^i}(n)$ are 
irreducible hypersurfaces for all $n$ large enough by Lemma \ref{l:irr}, there exist indices $1\le i_1<\cdots<i_s\le \ell$ such that
the $L^{i_k}$ are pairwise non-similar and
\begin{equation}\label{e:zar}
\zar{\partial \cK_L(n)}=\cZ_{\tL}(n) =\cZ_{L^{i_1}}(n)\cup\cdots \cup\cZ_{L^{i_s}}(n)
\end{equation}
for all $n$ large enough.
Since $\tL$ is minimal, it is (up to a unitary change of basis) equal to a direct sum of irredundant \irrL hermitian monic pencils $\tL^k$ by Proposition \ref{p:mp}. Each of them corresponds to an irreducible component in \eqref{e:zar} by Proposition \ref{p:pd-min}. Therefore $\tL= \tL^1\oplus\cdots\oplus \tL^s$ and,
after reindexing if needed,  $\cZ_{\tL^k}=\cZ_{L^{i_k}}$ for $k=1,\dots,s$. Then $\cK_{L^{i_k}}=\cD_{\tL^k}$ is convex for every $k$ and therefore
\begin{equation}\label{e:lmi}
\cK_L=\bigcap_k\cK_{L^{i_k}}=\bigcap_k\cD_{\tL^k}=\cD_{\tL^1\oplus\cdots\oplus \tL^s}.
\end{equation}
Moreover, $L^{i_k}$ is similar to $\tL^k$ by \cite[Theorem 3.11]{KV}.

Recall that $\hL$ is the direct sum of \irrL blocks $L^k$ that are similar to a hermitian monic pencil, and $\vL$ is the direct sum of the rest. Then every $L^{i_k}$ appears as a direct summand in $\hL$. Now let $L^m$ be an arbitrary pencil appearing in $\hL$. If it is not similar to $L^{i_k}$ for any $k$, then \eqref{e:zar} implies
$$\bigcap_k\cK_{L^{i_k}}\subseteq \cK_{L^m}.$$
Hence $\cK_f=\cD_{\hL}$ holds by \eqref{e:lmi}.
\qed

\begin{rem}\label{rem:sacvx}
Given a factorization of $f$ into atomic factors $f=f_1\cdots f_t$ with $f_j(0)=I$, one can use the proof of Theorem \ref{thm:main2} to 
identify those factors $f_j$ that  determine $\cK_f$.

By \eqref{e:zar}, 
$$\cZ_{L^{i_1}}(n)\cup\cdots \cup\cZ_{L^{i_s}}(n)\subseteq 
\cZ_{f_1}(n)\cup\cdots \cup\cZ_{f_t}(n).$$
for all $n$. Since $\cZ_{f_j}(n)$ is an irreducible surface for large $n$ by Proposition \ref{p:irrpoly}, there exist indices $1\le j_1<\cdots<j_s\le t$ such that
 $$\cZ_{L^{i_k}}=\cZ_{f_{j_k}}$$
for all $k=1,\dots,s$. Therefore
$$\cK_f=\bigcap_k\cK_{f_{j_k}}$$
by \eqref{e:lmi} and Proposition \ref{p:DZ}\ref{i:DZ1}. 

To find the indices $j_k$, we first compute minimal realizations for $f_j^{-1}=I +c_jL_j^{-1}\bb_j$, and put each $L_j$ into a block upper triangular form as in \eqref{e:triang}. For every $j$, precisely one of the blocks on the diagonal of $L_j$ is \irrL by Proposition \ref{p:irrpoly}. Then we compare these blocks to the pencils $L^{i_k}$ to determine $j_k$.
\qed
\end{rem}

\begin{proof}[Proof of Corollary \ref{cor:sacvx}]
$(\Leftarrow)$ is trivial. For the converse let $f=\prod_i f_i$ and consider a minimal FM realization $f^{-1}=I+c^*L^{-1}\bb$.
After a basis change 
we may assume that
$L$ is of the form \eqref{e:triang}. 
As in Remark \ref{rem:sacvx}, for every $i$ there
exists $j_i$ such that $\cZ_{L^i}=\cZ_{f_{j_i}}$, whence $\cK_{L^i}=\cK_{f_{j_i}}$. If some $L^i$
is not similar to a hermitian monic pencil, then 
 $\vL$ 
is nontrivial and is invertible
on $\interior \cK_{\hL}$ by 
convexity of $\cK$ and
Theorem \ref{thm:main2}. 
Hence $f_{j_i}$ is redundant, contradicting the assumption.
\end{proof}

\subsection{Finding an LMI representation for a convex $\cK_f$}\label{ssec:algo}
The main result of \cite{HM12} states that for
a hermitian  matrix polynomial $f\in\matt{\de}{\px}$ with $f(0)\succ0$, the set
$\cK_f(n)$ is convex for all $n$ if and only if $\cK_f$ is a free spectrahedron.
Actually, the version in \cite{HM12} does this for hermitian $f$ with
bounded $\cK_f$. However, these two assumptions are redundant. Indeed, the former
can be enforced by replacing $f$ by $f^*f$. The alternative proof of
\cite[Theorem 1.4]{HM12} due to Kriel \cite{Kri} is based on
Nash functions in real algebraic geometry and
the Fritz-Netzer-Thom  characterization \cite{FNT} of free spectrahedra
via operator systems theory.
It also works for unbounded $\cK_f=\cD_{f^*f}$.

\subsubsection{Algorithm}\label{sssec:algo}
We next explain how the machinery developed in this paper produces an
explicit 
minimal 
LMI representation for a convex $\cK_f$.
This efficient algorithm only  involves linear
algebra and semidefinite programming (SDP) \cite{WSV,BPR13}.

\begin{enumerate}[label={\rm(\alph*)}]
\item
Compute the minimal realization
\[
I+c^*L^{-1}\bb
\]
for $f^{-1}$.
To construct this realization, one uses the explicit state-space formulae, c.f. \cite[Section 4]{BGM05},
for addition and multiplication to construct a realization for $f$, applying the 
Kalman decomposition \cite[Section 7]{BGM05} at each step to ensure minimality. 
Lastly, the formula for inversion \eqref{e:fmi} yields a minimal realization 
for $f^{-1}$. This process only uses linear algebra, and minimization after 
every step keeps the sizes of intermediate realizations from blowing up.
 {\it Mathematica} notebooks with rudimentary programs for computing minimal realizations are found in \cite{HdOMS}.
\item
Next we find the Burnside decomposition \cite[Corollary 5.23]{Bre} of $L$ into 
\[
L=\begin{pmatrix}
{L}^1 & \star & \star \\
 & \ddots &\star \\
 & & {L}^\ell 
\end{pmatrix},
\]
where each $L^i$ is either \irrL or the identity.
This decomposition can be found using deterministic algorithms with polynomial time complexity. Let $\cA$ be the unital matrix subalgebra generated by the coefficients of $L$. One first computes and mods out the radical of $\cA$ (corresponding to the $\star$ entries) using the algorithm in \cite[Section 3]{dGIKR}; then the
algorithm of \cite[Theorem 3.5]{Ebe} is applied to find the irreducible blocks $L^j$. Alternatively, \cite[Theorem 6]{CIK} gives an algorithm for decomposing $\cA$ as a direct sum of minimal left ideals; after omitting the ideals contained in the radical using a linear test \cite[Corollary 4.3]{FR}, the remaining ideals are necessarily one-dimensional, and the union of bases of ranges of their generators is a basis in which $L$ has the desired block structure.

\item
Considering only the \irrL blocks, choose one from each similarity class.
 Note that checking similarity of linear pencils amounts to checking whether the
system of linear equations 
$PL^i=L^jP$ has an invertible solution $P$. 
\item
Find all those $L^i$ that are similar to a hermitian monic pencil. 
This uses SDP. Each solution to the feasibility semidefinite problem
\beq\label{e:sdp}
Q\succeq I, \qquad Q(L^i)^*=L^iQ
\eeq
leads to a hermitian monic pencil 
$\tL^i=Q^{-\frac12}L^iQ^{\frac12}$.
If \eqref{e:sdp} is infeasible, then $L^i$ is not similar to a hermitian monic pencil.
\item
The direct sum $\tL$ of the hermitian monic pencils $\tL^i$ obtained in (d) satisfies
\[
\cD_{\tL}=\cK_f
\]
by Theorem \ref{thm:main2}.
\item
Using the minimization algorithm described in \cite[Subsection 4.6]{HKM}, which uses SDP to 
eliminate redundant blocks in $\tL$, we can produce a minimal hermitian monic pencil  $\hL$ with
$\cD_{\hL}=\cK_f$.
\end{enumerate}

\subsection{Checking whether  $\cK_f$ is convex} \label{ssec:algo2} 

As a side product of Theorem \ref{thm:main2}
and the Algorithm in Subsection \ref{ssec:algo}
we obtain a procedure for checking whether $\cK_f$ is convex. 

Given $f\in\matt{\de}{\px}$ with $f(0)=I$, 
we construct the realization of $f^{-1}$ 
and identify its \irrL blocks $L^i$, choosing one from
each similarity class.
Let $\hL$ be the direct sum of all the 
$L^i$ that are similar to a hermitian monic pencil, and let $\vL$ be the direct sum of the others. By Theorem \ref{thm:main2},
it  suffices to 
present an algorithm for checking whether property
\ref{it:lastInv} of Theorem \ref{thm:main2}
holds, that is, whether $\vL$ is invertible on the interior of $\cD_{\hL}$. To this end we first prove  general statements about (rectangular) affine linear pencils being of full rank on the interior of a free spectrahedron (see also \cite{KPV17,Vol+,Pas18,GGOW16} for related results). 

For the rest of this section let $L$ be a $d\times d$ hermitian monic pencil, and let $\tL$ be a $\delta\times \ve$ affine linear pencil (in $x$ and $x^*$). Assume $\de\geq\ve$ and consider the following system:
\begin{equation}\label{eq:preSDP}
\real(D\tL) = P_0 + \sum_k C_k^* L C_k,\quad P_0\succeq0
\end{equation}
for some 
$D\in \mat{\ve\times \de}$,
$C_k\in \mat{d\times \ve}$ and $P_0\in\mat{\ve}$, where $\real (M)=\frac12(M+M^*)$ denotes the real part of a square matrix $M$. 
(If $\de<\ve$ we simply replace $\tL$ by $\tL^*$.)
Note that
 $D=0$,  $P_0=0$, $C_k=0$ is a trivial solution. 
We mention that 
 \eqref{eq:preSDP} is  related to the
notion of a $\tL$-real left module of \cite{HKN}.

\begin{lem}\label{l:yes}
Let $\de\geq\ve$. If there exists a solution of \eqref{eq:preSDP} satisfying
\begin{equation}
\label{e:yes}
  \ker P_0 \cap \bigcap_k \ker C_k=\{0\},
\end{equation}
then $\tL(X,X^*)$ is full rank for every $X$ satisfying $L(X,X^*)\succ0$.
\end{lem}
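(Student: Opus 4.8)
The plan is to prove the contrapositive of a pointwise version: fix $X$ with $L(X,X^*)\succ 0$ and suppose $\tL(X,X^*)$ is rank deficient, i.e.\ there is a nonzero vector $\xi$ in its kernel; I will show this forces a violation of \eqref{e:yes}. First I would apply the hypothesized identity \eqref{eq:preSDP} at the point $X$, obtaining
\[
\tfrac12\big(D\tL(X,X^*)+\tL(X,X^*)^*D^*\big) = P_0\otimes I_n + \sum_k C_k(X,X^*)^* \, (I\otimes L(X,X^*))\, C_k(X,X^*),
\]
where here $D$, $P_0$, $C_k$ act on the first tensor leg and $L(X,X^*)\succ 0$ on the ampliation. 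Then I would compress this operator identity to the vector $\eta := \xi \otimes e$ for a suitable $e$, or more simply test against a vector of the form $v\otimes e$ with $v$ chosen so that the left-hand side vanishes.

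The key step is the choice of test vector. Since $\tL(X,X^*)$ is $n\delta \times n\ve$ and rank deficient (as $\delta \ge \ve$, ``full rank'' means rank $n\ve$), its kernel contains a nonzero vector $\zeta \in \C^{n\ve}$. Pairing the displayed identity with $\zeta$ on both sides: the left-hand side becomes $\real\langle D\tL(X,X^*)\zeta,\zeta\rangle = \real\langle 0,\zeta\rangle = 0$, because $\tL(X,X^*)\zeta = 0$. The right-hand side is $\langle (P_0\otimes I_n)\zeta,\zeta\rangle + \sum_k \langle (I\otimes L(X,X^*))\,C_k(X,X^*)\zeta,\, C_k(X,X^*)\zeta\rangle$, a sum of nonnegative terms because $P_0\succeq 0$ and $L(X,X^*)\succ 0$. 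Hence each term vanishes: $(P_0\otimes I_n)\zeta = 0$ (so $\zeta$ lies in $\ker(P_0\otimes I_n) = \ker P_0 \otimes \C^n$) and, using $L(X,X^*)\succ 0$, also $C_k(X,X^*)\zeta = 0$ for every $k$. The remaining point is to pass from ``$C_k(X,X^*)\zeta = 0$'' to ``$\zeta$ is built from $\ker C_k$'': writing $\zeta$ in coordinates as an $\ve\times n$ matrix $Z$, the condition $(P_0\otimes I_n)\zeta=0$ says every column of $Z$ lies in $\ker P_0$, and $C_k(X,X^*)\zeta=0$ together with the fact that $C_k(X,X^*)$ is an affine-linear expression in the entries of $X$ acting as $C_{k,w}\otimes w(X,X^*)$-type sums needs a little care; I would argue that the columns of $Z$, or rather the subspace they span together with the translates under the monomials $w(X,X^*)$, must lie in $\bigcap_k \ker C_k$, contradicting \eqref{e:yes}. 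Concretely, since the constant term of $C_k$ contributes $C_{k,\emptyset}\otimes I_n$ acting on $\zeta$, and more generally each monomial acts on the first leg by $C_{k,w}$, vanishing of $C_k(X,X^*)\zeta$ forces the $\C^{\ve}$-components of $\zeta$ appearing in the range of $w(X,X^*)^T$ to lie in $\ker C_{k,w}$; a cleaner route is to note that some nonzero column of $Z$ already lies in $\ker P_0 \cap \bigcap_k\ker C_{k,\emptyset}$ once one isolates the constant terms.

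The main obstacle I anticipate is exactly this last bookkeeping step: disentangling the tensor structure to conclude that \eqref{e:yes} is violated, because $C_k(X,X^*)\zeta = 0$ a priori only constrains $\zeta$ relative to the particular matrices $w(X,X^*)$ rather than giving a coordinate vector in $\bigcap_k \ker C_k \subseteq \C^{\ve}$. I would handle this by extracting the constant (degree-zero) part: evaluate the identity \eqref{eq:preSDP} not just at $X$ but exploit that the identity holds \emph{as an identity of polynomials}, so in particular its ``constant in the second leg'' component reads $\real(D_0\tL_0) \succeq C_{k,\emptyset}$-terms, but more usefully I can differentiate or specialize. Alternatively — and this is likely the argument the authors intend — one replaces $X$ by the single point and uses that $L(X,X^*)\succ 0$ is an \emph{open} condition, so we may also scale: the cleanest fix is to observe that if $\zeta\ne 0$ is in $\ker\tL(X,X^*)$ then picking out any nonzero ``slice'' $\zeta = v\otimes e_j$-free combination still yields a nonzero $v\in\C^{\ve}$ with $v\in\ker P_0$ and $v\in\ker C_{k,\emptyset}$ for all $k$ after projecting onto the component of $\zeta$ transverse to the span of the non-constant monomial ranges — so $v\in\ker P_0\cap\bigcap_k\ker C_k$, contradicting \eqref{e:yes}. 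I would make this precise by choosing $n=1$ reductions where possible, or by a direct minimal-vector argument, and I expect the write-up to dispatch it in a few lines once the tensor notation is set up carefully.
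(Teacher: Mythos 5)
Your overall plan is sound and matches the paper's: evaluate \eqref{eq:preSDP} at $(X,X^*)$, pair against a vector in $\ker\tL(X,X^*)$, and use $P_0\succeq 0$ together with $L(X,X^*)\succ 0$ to kill each summand on the right-hand side. The paper runs the argument forward rather than by contradiction — it shows directly that $\real(D\tL)(X,X^*)\succeq 0$ with trivial kernel, hence $\real(D\tL)(X,X^*)\succ 0$, hence $(D\tL)(X,X^*)$ is invertible, hence $\tL(X,X^*)$ has full column rank — but that is a cosmetic difference.

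However, the entire ``main obstacle'' you then devote two paragraphs to is a phantom, born of a misreading of the hypothesis. In \eqref{eq:preSDP} the $C_k$ are \emph{constant} matrices in $\mat{d\times\ve}$, not affine linear pencils. There is no monomial decomposition $C_{k,w}\otimes w(X,X^*)$ to untangle; the evaluation is simply $C_k\otimes I_n$. The term you write as $C_k(X,X^*)^*\,(I\otimes L(X,X^*))\,C_k(X,X^*)$ should be $(C_k\otimes I_n)^*\,L(X,X^*)\,(C_k\otimes I_n)$. Once this is corrected, the step you struggled with is immediate: from $(P_0\otimes I_n)\zeta=0$ and $(C_k\otimes I_n)\zeta=0$ for all $k$ you get
\[
\zeta\in\bigl(\ker P_0\otimes\C^n\bigr)\cap\bigcap_k\bigl(\ker C_k\otimes\C^n\bigr)
=\Bigl(\ker P_0\cap\bigcap_k\ker C_k\Bigr)\otimes\C^n=\{0\},
\]
contradicting $\zeta\neq 0$. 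No ``slices,'' constant-term projections, or $n=1$ reductions are needed; the hand-waving fixes you propose (isolating $C_{k,\emptyset}$, picking a ``transverse'' column of $Z$) address a problem that does not exist and would in any case not obviously succeed. Reread the paragraph preceding the lemma: $D\in\mat{\ve\times\de}$, $C_k\in\mat{d\times\ve}$, $P_0\in\mat{\ve}$ are all scalar matrices, and the only noncommutative objects in \eqref{eq:preSDP} are $\tL$ and $L$.
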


\begin{proof}
Suppose  \eqref{eq:preSDP} holds and 
 $X\in\mat{n}^g$ satisfies $L(X,X^*)\succ0$. If $\real(D\tL)(X,X^*)v=0$ for $v\in\C^{\ve n}$, then \eqref{eq:preSDP} together with $P_0\succeq0$ and $L(X,X^*)\succ0$ imply
$$(P_0\otimes I)v=0,\quad (C_k\otimes I)v=0\quad \text{for all }k.$$
Therefore $v=0$ by equation \eqref{e:yes}. Hence $\real(D\tL)(X,X^*)$ is positive definite, so $(D\tL)(X,X^*)$ is invertible. Consequently $\tL(X,X^*)$ has full rank.
\end{proof}

\begin{prop}\label{p:no}
Let $\de\geq\ve$. If every solution  of \eqref{eq:preSDP} satisfies
$$P_0=0,\quad C_k=0\quad \text{for all }k,$$
then there exists $X\in \mat{\max\{d,\ve\}}^g$ such that $L(X,X^*)\succ0$ and $\ker\tL(X,X^*)\neq \{0\}$.
\end{prop}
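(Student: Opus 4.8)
The plan is to argue by duality/Hahn--Banach separation. Consider the real vector space of hermitian $\ve\times\ve$ matrices over the free algebra, and inside it the convex cone $\cC$ generated by $\real(D\tL)$ for all $D$ together with the sums $\sum_k C_k^* L C_k$ with the $C_k$ ranging over all matrices of appropriate size. Equivalently, work at a fixed (large) matrix level $n$: let $\cC_n$ be the cone in the real hermitian matrices of size $\ve n$ spanned by $\{\real(D\tL)(X,X^*) : D, \ X\in\mat n^g\}$ and $\{\sum_k (C_k\otimes I_n)^*L(X,X^*)(C_k\otimes I_n) : C_k,\ X\in\mat n^g\}$. The hypothesis that every algebraic solution of \eqref{eq:preSDP} forces $P_0=0$ and $C_k=0$ says, roughly, that the only positive semidefinite matrix lying in the ``$\tL$ part minus $L$ part'' is $0$; I want to convert this into the statement that some positive definite $P_0$ is \emph{not} expressible, and then separate. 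More precisely, I would set up the finite-dimensional SDP feasibility problem ``find $P_0\succeq 0$, $P_0\ne 0$, $D$, $C_k$ with $\real(D\tL)=P_0+\sum_k C_k^*LC_k$'' as a linear matrix inequality in the coefficients, observe it is infeasible by hypothesis, and invoke an SDP duality/Farkas-type theorem (e.g.\ a theorem of the alternative for linear matrix inequalities, as in the matrix Positivstellensatz literature cited in the paper) to produce a dual certificate.

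The dual certificate should be a linear functional, represented by a positive semidefinite matrix (or a positive linear map) $W$ on the relevant space, which annihilates the cone coming from $\sum_k C_k^*LC_k$ and from $\real(D\tL)$, is nonzero, and pairs nonnegatively with $P_0\succeq 0$. Unwinding, annihilating $\{C^*LC\}$ for all $C$ means (a Gram/compression argument) that $W$, viewed appropriately, is supported where $L$ vanishes; annihilating $\real(D\tL)$ for all $D$ means $\tL$ has a kernel vector against $W$. The standard move here is to produce from $W$, via a GNS-type construction, a tuple $X$ on a space of dimension at most $\max\{d,\ve\}$ (bounding the size is where the hypothesis $\de\ge\ve$ and the sizes of $L$, $\tL$ enter) together with a vector $w\ne 0$ such that $L(X,X^*)\succeq 0$ and $\tL(X,X^*)w=0$. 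A perturbation argument (replace $L$ by $L-\epsilon(\text{something})$, or move $X$ slightly into the interior) upgrades $L(X,X^*)\succeq 0$ to $L(X,X^*)\succ 0$ while keeping $\tL$ rank-deficient at a nearby point; alternatively one shows directly that the certificate lands in the interior.

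The main obstacle I anticipate is twofold. First, getting the \emph{dimension bound} $X\in\mat{\max\{d,\ve\}}^g$ is not automatic from an abstract separation argument, which would a priori give an infinite-dimensional or uncontrolled-dimensional representation; one needs to run the argument at the ``right'' finite level, presumably using that the relevant algebra is generated by the coefficients of $L$ (a $d$-dimensional representation) and that $\tL$ acts $\ve$-dimensionally, and then take a cyclic subspace. Controlling this carefully, and checking the two cases $d\ge\ve$ versus $\ve>d$ in the $\max$, is the delicate bookkeeping. Second, passing from $L(X,X^*)\succeq 0$ with $\tL$ merely rank-deficient, to $L(X,X^*)\succ 0$ strictly, requires a genuine argument: naively perturbing $X$ can destroy the kernel of $\tL(X,X^*)$. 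The fix is likely to keep the kernel vector $w$ fixed and perturb only in directions preserving $\tL(X,X^*)w=0$ (an affine condition in $X$ for fixed $w$), noting there is slack because $L(0)=I\succ 0$, so a small move toward $0$ makes $L$ strictly positive; one must check this move can be taken inside the affine subspace $\{X:\tL(X,X^*)w=0\}$, which holds because that subspace contains a point (namely, scale $X$ toward $0$ if $\tL$ is linear, or use that $\tL(0)$ already has the kernel behavior) — this is the step I would write out most carefully.
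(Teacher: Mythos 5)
Your overall strategy (separation/duality to produce a linear functional vanishing on the ``$\tL$-part'' and strictly positive on the ``$L$-part'', followed by a GNS construction) is exactly the route the paper takes. But there is a genuine gap at precisely the step you flag as delicate: the upgrade from $L(X,X^*)\succeq 0$ to $L(X,X^*)\succ 0$. Your proposed fix --- keep the kernel vector $w$ fixed, move $X$ within the affine subspace $\{X' : \tL(X',X'^*)w = 0\}$ toward the origin --- does not work, because $\tL$ is \emph{affine} linear with $\tL(0) = \tC$ typically of full rank (indeed $\tL(0)=I$ in the application to Theorem \ref{thm:main2}). Hence $\tL(0)w = \tC w \neq 0$, so $0$ is not in that affine subspace, and scaling $X$ toward $0$ immediately destroys the kernel condition. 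More generally, there is no reason the affine subspace $\{X' : \tL(X',X'^*)w = 0\}$ should meet the interior of $\cD_L$.

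The paper sidesteps this entirely by making the separation \emph{strict at the outset}. It works abstractly on $\cV_2$, the matrix polynomials of degree at most two, and shows (Lemma \ref{l:int}) that the subspace $\cU$ generated by $\real(D\tL)$-type elements meets the closed convex cone $\cC+\cS$ only at $0$. Klee's separation theorem for a closed cone disjoint (except at $0$) from a subspace then gives a functional $\lambda_0$ with $\lambda_0\left((\cC+\cS)\setminus\{0\}\right) = \R_{>0}$, i.e., \emph{strictly} positive off the origin, not merely nonnegative. Feeding this into the GNS construction of Lemma \ref{l:aY}, the strict positivity transfers directly: for every nonzero vector $v$ in the GNS model, $\langle L(X,X^*)\text{-compression } v, v\rangle_\lambda = \lambda(v^*VLV^*v) > 0$ because $v^*VLV^*v\in\cC\setminus\{0\}$. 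So $L(X,X^*)\succ 0$ falls out with no perturbation. Likewise, the dimension bound $\eta = \max\{d,\ve\}$ is not a ``cyclic subspace'' afterthought but is built in: the GNS Hilbert space is $\cV_0 = \mat\eta$, of dimension $\eta^2$, and the multiplication operators $Y_j$ commute with left multiplication $\ell_a$, forcing $Y_j = I\otimes X_j$ with $X_j\in\mat\eta$. Your hedging between ``fixed level $n$'' and ``abstract'' should be resolved decisively in favor of the abstract degree-$\le 2$ setup, as this is what makes both the dimension count and the strictness clean.
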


Before proving Proposition \ref{p:no} we introduce some  notation. Let $\eta=\max\{d,\ve\}$. For $\ell=0,1,2,$ let $\cV_\ell\subseteq \matt{\eta}{\px}$ 
denote  the subspace of polynomials of degree at most $\ell$, and let
\begin{align*}
\cS &= \left\{\sum_i L_i^*L_i\colon L_i\in \cV_1 \right\}, \\
\cC &= \left\{\sum_k C^*_kLC_k\colon C_k\in \mat{d\times \eta} \right\}, \\
\cU &= \left\{
\begin{pmatrix}D_1\tL+\tL^*E_1^* & \tL^*E_2^* \\ D_2\tL & 0\end{pmatrix}
\colon D_1,E_1\in \mat{\ve\times \delta},\, D_2,E_2\in \mat{(\eta-\ve)\times\delta} \right\}.
\end{align*}
Also let $\cVh_2\subseteq \cV_2$ be the $\R$-subspace of hermitian matrix polynomials. 
Both $\cC$ and $\cS$ are convex cones in $\cVh_2$, and $\cU$ is a subspace in $\cV_2$. Observe that 
$$\cU\cap\cVh_2 = \left\{
\begin{pmatrix}\real(D_1\tL) & \tL^*D_2^* \\ D_2\tL & 0\end{pmatrix}
\colon D_1\in \mat{\ve\times \delta},\, D_2\in \mat{(\eta-\ve)\times\delta} \right\}$$
and $\cU=(\cU\cap\cVh_2)+i(\cU\cap\cVh_2)$. Using the standard argument involving Caratheodory's theorem on convex hulls \cite[Theorem 2.3]{Bar} it is easy to show that $\cC+\cS$ is closed in $\cVh_2$; see e.g. \cite[Proposition 3.1]{HKM12}. 

\begin{lem}\label{l:int}
	Keep the notation from above. If every solution  of \eqref{eq:preSDP} satisfies
$$P_0=0,\quad C_k=0\quad \text{for all }k,$$
then $\cU\cap(\cC+\cS)=\{0\}$.
\end{lem}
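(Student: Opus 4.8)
The plan is to prove the contrapositive, or rather a direct separation argument: assuming $\cU \cap (\cC+\cS) \neq \{0\}$, I would produce a nontrivial solution of \eqref{eq:preSDP}. Suppose $0 \neq p \in \cU \cap (\cC + \cS)$. Since $\cC + \cS \subseteq \cVh_2$ consists of hermitian polynomials, $p$ is hermitian, so $p \in \cU \cap \cVh_2$, which by the explicit description above means
\[
p = \begin{pmatrix} \real(D_1\tL) & \tL^*D_2^* \\ D_2\tL & 0 \end{pmatrix}
\]
for some $D_1 \in \mat{\ve\times\delta}$, $D_2 \in \mat{(\eta-\ve)\times\delta}$. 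On the other hand $p \in \cC + \cS$, so $p = \sum_k C_k^* L C_k + \sum_i L_i^* L_i$ with $C_k \in \mat{d\times\eta}$ and $L_i \in \cV_1$. Setting $P_0 := \sum_i L_i^* L_i \succeq 0$ (note $P_0$ is a square $\eta\times\eta$ hermitian polynomial, but since $p$ has degree at most $2$ and $\cU$-elements are actually affine-times-linear, the constant and degree-one homogeneous parts must match up — I will need to check that $P_0$ as constructed is genuinely a constant PSD matrix, i.e. that the $L_i$ are forced to be constant; this follows by comparing homogeneous components of degree $0,1,2$ on both sides).

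The key structural point is to read off from the matched equation the block at position $(1,1)$: it gives $\real(D_1\tL) = (P_0)_{11} + \sum_k (C_k)_{\bullet,1:\ve}^* L (C_k)_{\bullet,1:\ve}$ where the subscript denotes the restriction to the first $\ve$ columns. Relabeling $D := D_1$, $\tilde P_0 := (P_0)_{11}$ (the top-left $\ve\times\ve$ block of $P_0$, still PSD since $P_0 \succeq 0$), and $\tilde C_k := (C_k)_{\bullet,1:\ve}$, I obtain exactly a solution of \eqref{eq:preSDP}. It remains to see this solution is nontrivial. If it were trivial, i.e. $\tilde P_0 = 0$ and all $\tilde C_k = 0$, then from $P_0 \succeq 0$ and $(P_0)_{11}=0$ we get that the first $\ve$ rows and columns of $P_0$ vanish, and similarly the first $\ve$ columns of each $C_k$ vanish; feeding this back into the matched equation, the $(1,1)$ block of $p$ is zero and the $(1,2)$ block $\tL^*D_2^*$ is expressed purely through the tails of $P_0$ and $C_k$ — I would then argue, using $P_0 \succeq 0$ again (so its off-diagonal $(1,2)$ block is also constrained once the $(1,1)$ block vanishes) together with $L(0)=I$, that in fact the whole of $p$ is forced to vanish, contradicting $p \neq 0$. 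The cleanest way to run this last step is probably: evaluate the matched identity at a point $(X,X^*)$ with $L(X,X^*)\succ 0$ and observe that $p(X,X^*) \succeq 0$ as a $\eta\times\eta$ matrix; since the $(2,2)$ block of $p$ is identically $0$, positive semidefiniteness forces the $(1,2)$ block $\tL(X,X^*)^*D_2^*$ to vanish as well, hence $p(X,X^*) = \diag(\real(D_1\tL)(X,X^*),0)$, and then triviality of $(\tilde P_0, \tilde C_k)$ makes $\real(D_1\tL)(X,X^*) \succeq 0$ with no PSD slack — pushing to the boundary of $\cD_L$ and using minimality-type density (Proposition \ref{p:pd-min}) one concludes $\real(D_1\tL) \equiv 0$ as a polynomial, so $p \equiv 0$.

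I expect the main obstacle to be the bookkeeping in the ``triviality implies $p=0$'' step: one must be careful that $\tilde P_0 = 0$ together with $P_0 \succeq 0$ really does kill the first block-row and block-column of $P_0$ (this is the standard fact that a PSD matrix with a zero diagonal block has the corresponding off-diagonal blocks zero), and likewise that $\tilde C_k = 0$ kills the relevant part of $C_k^* L C_k$. The off-diagonal bookkeeping — showing the $(1,2)$ entry $\tL^*D_2^*$ cannot survive on its own — is the delicate part, and the positive-semidefiniteness-of-$p$-on-$\interior\cD_L$ argument above is the tool that handles it cleanly. The degree bookkeeping (forcing the $L_i$ to be constant) is routine but should be stated explicitly, since $\cU$ elements are ``$\real(D\tL)$'' which has a constant term $\real(D)$ and a linear term, whereas a general sum $\sum L_i^* L_i$ could a priori have a genuine degree-two part that must be cancelled by $\sum C_k^* L C_k$; matching the degree-two homogeneous parts gives $\sum (L_i^{(1)})^* L_i^{(1)} + \sum (C_k A\mydot x + C_k A^*\mydot x^*)^* (\cdots) \cdot(\text{sign})$ — here one uses that $L = I - A\mydot x - A^*\mydot x^*$ so the degree-two part of $C_k^* L C_k$ is $-C_k^*(A\mydot x + A^*\mydot x^*)C_k$, which is \emph{not} sign-definite, so this matching is itself informative rather than automatic, but combined with the degree-one and degree-zero matchings it pins everything down.
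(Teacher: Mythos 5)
Your overall strategy is the same as the paper's: decompose an element $p\in\cU\cap(\cC+\cS)$ using the explicit block form of $\cU\cap\cVh_2$, read off from the $(1,1)$ block a solution of \eqref{eq:preSDP}, invoke the hypothesis to kill it, and then show $p=0$. But there are two concrete problems.

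First, the degree bookkeeping contains an error. You write that the degree-two part of $C_k^*LC_k$ is $-C_k^*(A\mydot x + A^*\mydot x^*)C_k$ and conclude that the degree-two matching is ``informative rather than automatic.'' In fact, $C_k$ is a \emph{constant} matrix and $L$ is affine linear, so $C_k^*LC_k$ has degree at most one; the expression you wrote down is its degree-\emph{one} homogeneous part, and its degree-two part is zero. Hence the only degree-two contribution on the right side of the matched identity is $\sum_i (L_i^{(1)})^*L_i^{(1)}$. Since the left side (being built from $\tL$ and $\real(D_1\tL)$) has degree at most one, this sum of hermitian squares must vanish identically, which forces each $L_i^{(1)}=0$, i.e.\ $L_i\in\cV_0$. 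So the conclusion you want is correct, but your stated reason for it is based on a miscomputation, and the matching is in fact automatic.

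Second, your ``triviality implies $p=0$'' step is far more elaborate than necessary, and as written leans on evaluations, positivity on $\interior\cD_L$, and Proposition~\ref{p:pd-min} where a short algebraic argument suffices. Once the hypothesis gives $p_1=0$ and $C_k=0$ (your $\tilde C_k$), you already have $\real(D_1\tL)=p_1+\sum_k C_k^*LC_k=0$ as a polynomial identity from the $(1,1)$ block; there is no residual ``PSD slack'' to analyze. Then $P_0\succeq0$ with vanishing $(1,1)$ block $p_1=0$ forces $p_2=0$ by the standard fact you mention, and the $(1,2)$ block identity $\tL^*D_2^*=p_2+\sum_k C_k^*LC_k'$ collapses to $\tL^*D_2^*=0$ since both terms on the right vanish. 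With the $(2,2)$ block of $p$ being $0$ by the definition of $\cU$, this already gives $p=0$. (One can additionally plug $x=0$ into the $(2,2)$ block identity to get $0=p_3+\sum_k C_k'^*C_k'$ and hence $p_3=0$, $C_k'=0$, as the paper does, but even that is not needed to conclude $p=0$.) Your appeal to $p\succeq0$ on $\interior\cD_L$ and to Zariski density is a detour the structure of the problem does not require.
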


\begin{proof}
Suppose
\begin{equation}\label{e:44}
\begin{pmatrix}\real(D_1\tL) & \tL^*D_2^* \\ D_2\tL & 0\end{pmatrix}
= \sum_i L_i^*L_i
+\sum_k \begin{pmatrix}C_k^* \\ C_k'^*\end{pmatrix}L\begin{pmatrix}C_k & C_k'\end{pmatrix}
\end{equation}
for $D_1\in \mat{\ve\times \delta}$, $D_2\in \mat{(\eta-\ve)\times\delta}$, $L_i\in \cV_1$, $C_k\in\mat{d\times\ve}$ and $C'_k\in\mat{d\times(\eta-\ve)}$. By looking at the degrees on both sides we obtain $L_i\in \cV_0$; let us write
$$\sum_i L_i^*L_i=\begin{pmatrix}p_1 & p_2 \\ p_2^* & p_3\end{pmatrix} .$$
Therefore $\real(D_1\tL)$ satisfies \eqref{eq:preSDP}, so $p_1=0$ and $C_k=0$ by the hypothesis. Moreover, $p_2=0$ by positive semidefiniteness. Finally, since $L$ is monic, \eqref{e:44} implies $p_3=0$ and $C'_k=0$.
\end{proof}

To prove Proposition \ref{p:no} we require a version of the Gelfand-Naimark-Segal (GNS) construction.
 Given a Hilbert space $H$, let $\EndH(H)$ denote the (bounded linear) operators on $H$.

\begin{lemma}
\label{l:aY}
Suppose $\lambda:\cV_2\to \C$ is a positive linear functional in the sense that $\lambda(f^*f)>0$ for
all $f\in \cV_1\setminus\{0\}$.  
Thus, the resulting scalar product  $\langle f_1,f_2\rangle_\lambda :=\lambda(f_2^*f_1)$
on $\cV_1$ makes $\cV_1$ a Hilbert space and $\cV_0\subseteq \cV_1$
is a subspace. Let  $\pi:\cV_1\to \cV_0=\mat{\eta}$ denote the orthogonal projection.  For $a\in\mat{\eta}$ let $\ell_a\in\EndH(\cV_0)$ denote
the map $f\mapsto af$, and let $Y_j\in\EndH(\cV_0)$ denote the map
$f\mapsto \pi(x_jf)$. Then,
\begin{enumerate}[label={\rm(\arabic*)}]
	\item $\ell_a^*=\ell_{a^*}$; 
	\item $Y_j^*f =\pi(x_j^* f)$;
	\item $\ell_a Y_j = Y_j \ell_a$ (and hence $\ell_aY_j^*=Y_j^* \ell_a$);
	\item \label{it:Ustar} 
       there is a unitary mapping $U: \C^\eta\otimes \C^\eta\to \cV_0$ such that
	$U^*\ell_a U = a\otimes I$; 
	\item \label{it:aY-last}
             there exists $X_j\in \mat{\eta}$ such that $U^*Y_j U = I\otimes X_j$, and 
	 if $L=C+ \sum_j A_j x_j +\sum_j B_j x_j^*$ 
        is an affine linear pencil of
	size $\eta$,  then 
	\[
	U L(X,X^*)U^* = \ell_C+\sum_j \ell_{A_j} Y_j + \sum_j \ell_{B_j} Y_j^*.
	\]
\end{enumerate}
\end{lemma}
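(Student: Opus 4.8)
The plan is to carry out a GNS-type construction driven by the linear functional $\lambda$, the standard move being to realize the multiplication operators on $\cV_0 = \mat{\eta}$ as tensor-factored operators on $\C^\eta \otimes \C^\eta$. First I would verify the elementary algebraic identities (1)--(3): statement (1) is immediate since $\langle \ell_a f_1, f_2\rangle_\lambda = \lambda(f_2^* a f_1) = \langle f_1, \ell_{a^*} f_2\rangle_\lambda$ because $\ell_{a^*}f_2 = a^* f_2 \in \cV_0$ already lies in the range of $\pi$; statement (2) follows because for $f_1, f_2 \in \cV_0$ we have $\langle Y_j f_1, f_2\rangle_\lambda = \langle \pi(x_j f_1), f_2\rangle_\lambda = \langle x_j f_1, f_2 \rangle_\lambda = \lambda(f_2^* x_j f_1) = \langle f_1, x_j^* f_2\rangle_\lambda = \langle f_1, \pi(x_j^* f_2)\rangle_\lambda$, using that $x_j f_1 \in \cV_1$ and that $\pi$ is self-adjoint; and statement (3) follows from $\ell_a Y_j f = a\,\pi(x_j f)$ versus $Y_j \ell_a f = \pi(x_j a f)$, where one checks $\pi(x_j a f) = a\, \pi(x_j f)$ by pairing both sides against an arbitrary $h \in \cV_0$: $\langle \pi(x_j a f), h\rangle_\lambda = \lambda(h^* x_j a f)$ and $\langle a\,\pi(x_j f), h\rangle_\lambda = \langle \pi(x_j f), a^* h\rangle_\lambda = \lambda(h^* a x_j f)$, and these agree because $a$ and $x_j$ commute in $\px$.

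Next, for (4), the key observation is that $a \mapsto \ell_a$ is a unital $*$-representation of $\mat{\eta}$ on the $\eta^2$-dimensional Hilbert space $\cV_0$. By the representation theory of the matrix algebra $\mat{\eta}$ (every representation is a multiple of the standard one), and since $\dim \cV_0 = \eta^2 = \eta \cdot \eta$, this representation is unitarily equivalent to $a \mapsto a \otimes I$ on $\C^\eta \otimes \C^\eta$; concretely one may take $U$ to send $e_i \otimes e_k$ to the matrix unit $E_{ik} \in \mat{\eta}$ after orthonormalizing, so $\ell_a U (e_i \otimes e_k) = a E_{ik} = \sum_p a_{pi} E_{pk}$ matches $U(a e_i \otimes e_k)$. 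For (5), by (3) each $Y_j$ commutes with $\ell_a = U(a\otimes I)U^*$ for every $a$, so $U^* Y_j U$ commutes with $a \otimes I$ for all $a \in \mat{\eta}$; the commutant of $\mat{\eta} \otimes I$ in $\EndH(\C^\eta \otimes \C^\eta)$ is $I \otimes \mat{\eta}$, hence $U^* Y_j U = I \otimes X_j$ for some $X_j \in \mat{\eta}$. Then $U^* Y_j^* U = I \otimes X_j^*$ by (2), and for a pencil $L = C + \sum_j A_j x_j + \sum_j B_j x_j^*$ of size $\eta$ one computes $L(X, X^*) = C\otimes I + \sum_j A_j \otimes X_j + \sum_j B_j \otimes X_j^*$ acting on $\C^\eta \otimes \C^\eta \cong \cV_0$; conjugating by $U$ turns $C \otimes I$ into $\ell_C$, $A_j \otimes X_j$ into $\ell_{A_j} Y_j$ and $B_j \otimes X_j^*$ into $\ell_{B_j} Y_j^*$, which is the asserted identity.

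I would take care in (5) to fix the convention under which $\C^\eta \otimes \C^\eta$ is identified with $\cV_0 = \mat{\eta}$, namely via the map $v \otimes w \mapsto v w^*$ (or the $U$ of part (4)), since this determines which tensor leg carries the left-multiplication and which carries the $X_j$. The one genuine subtlety — the part I expect to be the main obstacle — is pinning down that the operator-valued evaluation $L(X,X^*)$ really corresponds, under $U$, to $\ell_C + \sum_j \ell_{A_j}Y_j + \sum_j \ell_{B_j} Y_j^*$ with the correct placement of factors; this is bookkeeping rather than a deep point, but getting the tensor conventions consistent between parts (4) and (5) is where a careless sign or transpose would creep in. Everything else is the textbook GNS/double-commutant argument specialized to $\mat{\eta}$, and positivity of $\lambda$ on $\cV_1 \setminus \{0\}$ is used only to guarantee that $\langle\cdot,\cdot\rangle_\lambda$ is a genuine inner product so that $\cV_1$ is a Hilbert space and $\pi$ is well-defined.
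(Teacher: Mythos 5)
Your argument is correct and matches the paper's structure; the only real divergence is in item \ref{it:Ustar}. You invoke the structure theory of finite-dimensional $*$-representations of $\mat\eta$: since $a\mapsto\ell_a$ is a unital $*$-representation (this uses (1)) on a Hilbert space of dimension $\eta^2$, it must be the $\eta$-fold multiple of the standard representation, hence unitarily equivalent to $a\mapsto a\otimes I_\eta$. That is a perfectly valid, slightly more abstract route. The paper instead constructs $U$ explicitly: writing $\lambda|_{\cV_0}(f)=\trace(Pf)$ with $P\succ0$ (positive definiteness being exactly what $\lambda(f^*f)>0$ on $\cV_0$ buys), one sets $U(u\otimes v)=uv^{\mathrm t}P^{-1/2}$ and checks directly that this is unitary and intertwines $\ell_a$ with $a\otimes I$. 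The explicit version is what you would want if you cared about computing $U$ and the $X_j$, but for the existence statement your representation-theoretic argument is equally good.

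One small caution about your ``concretely'' aside in (4): sending $e_i\otimes e_k\mapsto E_{ik}$ is generally \emph{not} unitary for the inner product $\langle\cdot,\cdot\rangle_\lambda$, and if you orthonormalize the $E_{ik}$ by an arbitrary Gram--Schmidt the resulting map will in general no longer intertwine $\ell_a$ with $a\otimes I$. The correct repair is exactly the paper's $E_{ik}\mapsto E_{ik}P^{-1/2}$ (right multiplication, hence compatible with left multiplication by $a$). Since you only offered this as an illustration and the abstract argument stands on its own, this does not affect the validity of the proof, but as written the parenthetical would mislead a reader into thinking the matrix units are already an orthonormal basis. Items (1)--(3) and (5) are verified exactly as in the paper (the double-commutant step for $U^*Y_jU=I\otimes X_j$, then expanding the pencil), so no further comment is needed there.
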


\def\ti{{\rm t}}

\begin{proof}
The proofs of the first three items are straightforward. To prove \ref{it:Ustar}, 
since $\lambda|_{\cV_0}$ is a linear functional on $\mat{\eta}=\cV_0$, there
is a matrix $P\in \mat{\eta}$ such that $\lambda(f)=\trace(Pf)$. 
Further, since $\lambda$ is positive, $P$ is positive definite.
Define $U$ by $U(u\otimes v) = uv^\ti P^{-\frac 12}$ 
and extend by linearity. By the definition of $\langle\cdot,\cdot\rangle_\lambda$, 
\[
\begin{split}
\langle U (u_1\otimes v_1), U (u_2\otimes v_2)\rangle_\lambda 
& =  \lambda\left((u_2v_2^\ti P^{-\frac12})^* u_1v_1^\ti P^{-\frac 12} \right)\\
& =  \trace \left((u_1v_1^\ti P^{-\frac 12}) P (P^{-\frac 12}  (u_2v_2^\ti)^*) \right)\\
& =  \trace(u_1v_1^\ti (v_2^*)^\ti  u_2^*) =  \langle u_1,u_2\rangle \, \langle v_1,v_2\rangle,
\end{split}
\]
so $U$ is unitary.   Similarly, for $a\in \mat\eta$,
\[
\begin{split}
\langle U^* \ell_a U (u_1\otimes v_1),(u_2\otimes v_2)\rangle_\lambda
&=  \trace\Big(((au_1)v_1^\ti P^{-\frac 12}) P (P^{-\frac 12}  (u_2v_2^\ti)^* )\Big) \\
&=  \langle au_1,u_2\rangle \, \langle v_1,v_2\rangle \\
&=  \langle (a\otimes I)(u_1\otimes v_1), u_2\otimes v_2\rangle.
\end{split}
\]

Since $Y_j$ commutes with each $\ell_a$, it follows that $U^* Y_j U$ commutes
with each $a\otimes I$. Hence there is a $X_j\in \mat\eta$ such that
$U^*Y_j U = I\otimes X_j$ and hence $U^*Y_j^* U = I\otimes X^*_j$.
Finally, observe that 
\[
A_j\otimes X_j = (A_j\otimes I)(I\otimes X_j) = U^* \ell_{A_j} Y_j U
\]
and analogously $B_j\otimes X_j^*=U^* \ell_{B_j} Y_j^* U$.
\end{proof}

\begin{proof}[Proof of Proposition \ref{p:no}]
By Lemma \ref{l:int}, $\cU\cap(\cC+\cS)=\{0\}$. Since  $\cC+\cS$ is also closed and convex
and since $\cU$ is a subspace,   by \cite[Theorem 2.5]{Kle} there exists an $\R$-linear functional $\lambda_0:\cVh_2\to \R$ satisfying
$$\lambda_0\left((\cC+\cS)\setminus\{0\}\right)=\R_{>0},\qquad \lambda_0(\cU\cap \cVh_2)=\{0\}.$$
We extend $\lambda_0$ to $\lambda:\cV_2\to\C$ by
$$\lambda(f)= \lambda_0\left(\frac12(f+f^*)\right)+i\lambda_0\left(\frac{1}{2i}(f-f^*)\right).$$
Note that $\lambda$  vanishes on $\cU$. Since $\lambda(\cS\setminus\{0\})=\R_{>0}$, $\lambda$ is a positive functional, so Lemma \ref{l:aY} applies; we assume the notation therein.

Write $\tL=\tC+ \sum \tA_j x_j +\sum \tB_j x_j^*$ for $\tC,\tA_j,\tB_j\in\mat{\delta\times\ve}$. 
For $D\in\mat{\eta\times (\delta+\eta-\ve)}$, let\looseness=-1 
\begin{equation}\label{e:FD}
F_D := U (D\, (\tL\oplus I_{\eta-\ve})(X,X^*)) U^*
 = \ell_{D(\tC\oplus I)}+\sum_j \ell_{D(\tA_j\oplus 0)} Y_j+\sum_j \ell_{D(\tB_j\oplus 0)} Y_j^*;
\end{equation}
the second equality in \eqref{e:FD} holds by Lemma \ref{l:aY}\ref{it:aY-last}. Let $u$ denote $I_{\ve}\oplus 0\in\mat{\eta}$ 
considered as a vector in $\cV_0$. Then
\begin{align*}
F_D u
& = \left(\ell_{D(\tC\oplus I)}+\sum_j \ell_{D(\tA_j\oplus 0)} Y_j+\sum_j \ell_{D(\tB_j\oplus 0)} Y_j^*\right)u \\
& = \pi\left(
D(\tC\oplus I)(I\oplus 0)+\sum_j D(\tA_j\oplus 0)(I\oplus 0)x_j+\sum_j D(\tB_j\oplus 0)(I\oplus 0)x_j^*
\right) \\
&=\pi(D(\tL\oplus 0)).
\end{align*}
Hence for every  $f\in \cV_0$,
$$\langle F_D u,f\rangle_\lambda=  \langle D(\tL\oplus 0),f\rangle_\lambda=   \lambda (f^*D (\tL\oplus 0))=0,$$
since $f^*D (\tL\oplus 0)\in \cU$. Thus $F_Du=0$ for all $D\in\mat{\eta\times (\delta+\eta-\ve)}$. Consequently $$(\tL\oplus I)(X,X^*)U^*u=0$$
and hence $\ker \tL(X,X^*)\neq\{0\}$.

Now fix $0\ne v\in \cV_0=\mat{\eta}$ and choose an isometry 
$V:\C^d\to \C^\eta$  such that $V^*v\ne 0$. If $L=I+\sum_jA_jx_j+\sum_jA_j^*x_j^*$, then
$$ U((V\otimes I)L(X,X^*)(V^*\otimes I))U^*=\ell_{VV^*}+\sum_j \ell_{VA_jV^*} Y_j+\sum_j \ell_{VA_j^*V^*} Y_j^*$$
by Lemma \ref{l:aY}\ref{it:aY-last} and thus
\[
\begin{split}
\langle U((V\otimes I)L(X,X^*)(V^*\otimes I))U^* v,v\rangle_\lambda
&=  \langle \pi (VLV^* v),v\rangle_\lambda
=  \lambda (v^*V L V^*v) >0
\end{split}
\]
since $v^*V L V^*v\in\cC$ is nonzero. It follows that $L(X,X^*)$ is positive definite.
\end{proof}

\begin{cor}\label{c:rank}
Let $L$ be a $d\times d$ hermitian monic pencil. If
 $\tL$ is a $\de\times\ve$ affine linear pencil
 such that $\tL(X,X^*)$ is full rank for every $X$ in the interior of $\cD_L(\max\{d,\delta,\ve\})$, 
then $\tL$ is full rank on the interior of $\cD_L$.
\end{cor}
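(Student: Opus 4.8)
The plan is to reduce everything to the two situations already settled by Lemma~\ref{l:yes} and Proposition~\ref{p:no}, glued together by an induction on the number $\ve$ of columns of $\tL$. I begin with two harmless normalizations. Since $\operatorname{rank}\tL(X,X^*)=\operatorname{rank}\tL(X,X^*)^*=\operatorname{rank}\tL^*(X,X^*)$, I may replace $\tL$ by $\tL^*$ and assume $\de\geq\ve$, which is the standing hypothesis of Lemma~\ref{l:yes} and Proposition~\ref{p:no}. Next I record a padding observation: if $X\in\mat n^g$ satisfies $L(X,X^*)\succ0$ and $\tL(X,X^*)$ is rank deficient, then for any $m\geq n$ the tuple $X\oplus0_{m-n}$ again has $L\succ0$, while $\tL$ evaluated there equals $\tL(X,X^*)\oplus\tL(0)^{\oplus(m-n)}$, of rank at most $(\ve n-1)+(m-n)\ve<\ve m$, hence rank deficient. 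Recalling that $\interior\cD_L(n)=\{(X,X^*)\colon L(X,X^*)\succ0\}$, this shows that full rank of $\tL$ on $\interior\cD_L(m)$ implies full rank on $\interior\cD_L(k)$ for every $k\leq m$; so it suffices to prove that full rank on $\interior\cD_L\big(\max\{d,\de,\ve\}\big)$ forces full rank on all of $\interior\cD_L$.

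For the induction on $\ve$, I would examine the feasibility system \eqref{eq:preSDP}. If every solution has $P_0=0$ and all $C_k=0$, then Proposition~\ref{p:no} furnishes a point of size $\max\{d,\ve\}\leq\max\{d,\de,\ve\}$ at which $L\succ0$ and $\tL$ drops rank; padding it up to size $\max\{d,\de,\ve\}$ contradicts the hypothesis, so this case cannot occur. Otherwise fix a solution with $P_0\neq0$ or some $C_k\neq0$, and put $V=\ker P_0\cap\bigcap_k\ker C_k\subseteq\C^\ve$. If $V=\{0\}$, then Lemma~\ref{l:yes} immediately gives that $\tL$ is full rank on all of $\interior\cD_L$, and we are done. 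If $V\neq\{0\}$, then $P_0\neq0$ or some $C_k\neq0$ forces $V\subsetneq\C^\ve$, so $0<\dim V<\ve$.

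The crux is the claim that $\ker\tL(X,X^*)\subseteq V\otimes\C^n$ for every $X\in\interior\cD_L(n)$. To see it, evaluate \eqref{eq:preSDP} at such $X$: since $L(X,X^*)\succ0$,
\[
\real(D\tL)(X,X^*)=(P_0\otimes I_n)+\sum_k(C_k\otimes I_n)^*\,L(X,X^*)\,(C_k\otimes I_n)\succeq0,
\]
so $\tL(X,X^*)w=0$ gives $D\tL(X,X^*)w=0$, hence $\langle\real(D\tL)(X,X^*)w,w\rangle=0$, hence $\real(D\tL)(X,X^*)w=0$ by positive semidefiniteness, and therefore $(P_0\otimes I_n)w=0$ and $(C_k\otimes I_n)w=0$ for all $k$; using $\ker(M\otimes I_n)=\ker M\otimes\C^n$ this means $w\in V\otimes\C^n$. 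Now choose an isometry $\iota\colon\C^{\dim V}\to\C^\ve$ with range $V$ and set $\tL|_V:=\tL\iota$, a $\de\times\dim V$ affine linear pencil. On $\interior\cD_L(n)$ the map $w\mapsto(\iota\otimes I_n)w$ is a linear bijection from $\ker(\tL|_V)(X,X^*)$ onto $\ker\tL(X,X^*)$; since $\de\geq\ve>\dim V$, full rank of $\tL$ (resp.\ of $\tL|_V$) at $X$ is equivalent to triviality of the corresponding kernel, so $\tL$ and $\tL|_V$ have the same full-rank locus inside $\interior\cD_L(n)$, for every $n$. Thus $\tL|_V$ is full rank on $\interior\cD_L(\max\{d,\de,\ve\})$, hence, by the padding observation, on $\interior\cD_L(\max\{d,\de,\dim V\})$; the induction hypothesis applied to the $\de\times\dim V$ pencil $\tL|_V$ then shows $\tL|_V$, and therefore $\tL$, is full rank on all of $\interior\cD_L$.

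I expect the main obstacle to be precisely this kernel-containment step together with the bookkeeping it forces: one must check that passing to $V$ strictly decreases the column count while the size bound $\max\{d,\de,\cdot\}$ never increases, so that the induction closes. The adjoint and padding reductions, and the two appeals to Lemma~\ref{l:yes} and Proposition~\ref{p:no}, are routine. As a byproduct, running the argument without the size hypothesis also proves part~(2) of Theorem~\ref{thm:rank}.
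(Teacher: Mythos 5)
Your proof is correct and follows the same route as the paper: an induction on the number of columns, the same dichotomy between Proposition~\ref{p:no} (no nontrivial certificate, so a rank-deficient point of bounded size exists) and Lemma~\ref{l:yes} ($V=\{0\}$), and the same kernel-containment computation showing $\ker\tL(X,X^*)\subseteq V\otimes\C^n$, which reduces to the smaller pencil $\tL|_V$. Your explicit padding lemma is a small but useful addition: the paper's proof invokes Proposition~\ref{p:no} to produce a bad point ``in $\mat{\sigma}^g$'' with $\sigma=\max\{d,\de\}$, when that proposition literally produces one of size $\max\{d,\eta\}\le\sigma$; the padding step you spell out (tensoring by $X\oplus 0$ keeps $L\succ0$ and keeps $\tL$ rank deficient) is exactly what bridges that gap, and is needed again when descending from $\tL$ to $\tL|_V$ in the inductive step.
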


The proof of Corollary \ref{c:rank} given below, while not the most efficient, yields
an algorithm presented in Subsection \ref{sss:algo3} below.

\begin{proof}
Without loss of generality, suppose $\delta\ge \ve$ 
 and let $\sigma=\max\{d,\delta\}$.

Given $\eta\le \delta$ and $\tL,$ an affine linear pencil of size $\de\times\eta$ 
such that $\tL(X,X^*)$ is full rank for each $X$ in the interior of $\cD_L(\sigma)$,
consider solutions  to the system \eqref{eq:preSDP}, i.e.,
\begin{equation}\label{e:45}
\real(D\tL)=P_0 + \sum_k C_k^* L C_k,\quad P_0\succeq0,
\end{equation}
and denote $V=\ker P_0 \cap \bigcap_k \ker C_k\subseteq \C^{\eta}$.
If, for each solution,  $V=\C^{\eta}$ 
(equivalently $P_0=0$, $C_k=0$), then there exists $X\in\mat{\sigma}^g$ such 
that $L(X,X^*)\succ0$ and $\ker\tL(X,X^*)\neq\{0\}$ by Proposition \ref{p:no}, contradicting the assumption on $\tL$.
Hence there is a solution with $\dim(V)<\eta$.

We now argue by induction that, 
with  $\delta$ fixed, 
for each $\eta\le \de$
and each $\de\times \eta$ affine linear pencil $\pL$ such that 
$\pL(X,X^*)$ is full rank for every $X$ in the interior of $\cD_L(\sigma)$, 
we have  $\pL$ is  full rank on the interior of $\cD_L$.

In the case $\eta=1$, there is a solution to the system \eqref{eq:preSDP}
with $0=\dim(V)<\eta=1$. By Lemma \ref{l:yes}, we conclude that $\tL$ is full
rank on the interior of $\cD_L(\sigma)$. Hence the result
holds for $\eta=1.$

Recall that $\ve\le\delta$ and suppose the result holds for each $\eta<\ve$. Let $\tL$ be 
a $\de\times\ve$ affine linear pencil that is full rank 
on the interior of $\cD_L(\sigma)$.  As seen above, there
is a solution $D$ of \eqref{eq:preSDP} with $\eta=\dim(V)<\ve$.
In the case $\eta=0$, just as before, an application
of Lemma \ref{l:yes} completes the proof. Accordingly,
we assume $0<\eta<\ve$.
Let $\tL'$ denote the $\delta\times \eta$ pencil whose coefficients are the restrictions of the
coefficients of $\tL$ to $V$. Let $X$ satisfy $L(X,X^*)\succ0$ and suppose $\tL(X,X^*)(u+u')=0$ for $u\in V^\perp$ and $u'\in V$. 
Thus,
$$(u+u')^* \real(D\tL)(X,X^*) (u+u')=0$$
and hence, by equation \eqref{e:45},
$$u^*\left(P_0 + \sum_k C_k^* L C_k\right)u=0.$$
Thus $u\in V$ and therefore $u=0.$ 
 Consequently $\tL'(X,X^*)u'=\tL(X,X^*)u'=0$. Therefore, 
 for each $X$ in the interior of $\cD_L$,
\begin{equation}\label{e:46}
\ker\tL(X,X^*)\neq\{0\} \iff \ker\tL'(X,X^*)\neq\{0\}.
\end{equation}
In particular, by assumption if $X$ is in the interior of $\cD_L(\sigma)$,
 then $\ker\tL(X,X^*)=\{0\}$. Hence the same is true of $\tL^\prime$.
By the induction hypothesis, $\tL'$ is of full rank on the interior of $\cD_L$. Therefore $\tL$ is of full rank on the interior of $\cD_L$ by \eqref{e:46}.
\end{proof}

\subsubsection{Algorithm}
\label{sss:algo3}
Let $L$ be a $d\times d$ hermitian monic pencil
and let $\tL$ be a $\de\times\ve$ affine linear pencil. Following the proof of Corollary \ref{c:rank} we describe an algorithm for checking whether $\tL$ is of full rank on the interior of $L$.

{\bf Step 1.}
Solve the following feasibility SDP:
\begin{equation}\label{eq:sdp1}
\begin{split}
\trc(\real(D\tL)(0))&=1\\
\real(D\tL)& = P_0 + \sum_k C_k^* L C_k
\quad
\text{ for some } C_k,P_0, \text{ with }P_0\succeq0.
\end{split}
\end{equation}

We note that \eqref{eq:sdp1} is  a SDP.
Indeed, the first equation is simply a linear constraint, and the second equation can be rewritten 
as a semidefinite constraint
using (localized) moment matrices;
see e.g.~\cite{PNA,BKP} for details.

{\bf Step 2.}
If 
\eqref{eq:sdp1} is infeasible, 
then $\tL(X,X^*)$ is not of full rank for some $X$ in the interior of $\cD_L$ by Proposition \ref{p:no}.

{\bf Step 3.}
Otherwise we have a solution with $V:=\ker P_0 \cap \bigcap_k \ker C_k\subsetneq \C^{\ve}$. 

\hspace{5mm}{\it Step 3.1}
If $V=(0)$, then $\tL$ is of full rank on the interior of $\cD_L$ by Lemma \ref{l:yes}.

\hspace{5mm}{\it Step 3.2.}
If $\ve'=\dim V>0$, then let $\tL'$ be the $\delta\times \ve'$ affine linear pencil whose coefficients are the restrictions of coefficients of $\tL$ to $V$. Then $\tL$ is of full rank on the interior of $\cD_L$ if and only if $\tL'$ is of full rank on the interior of $\cD_L$. Now we apply Step 1 to $\tL'$; since $\tL'$ is of smaller size than $\tL$, the procedure will eventually stop.

\section{Examples}\label{s:exa}
We say that a hermitian $f\in\px$ with $f(0)=1$ is a \df{minimal degree defining polynomial for $\cD_f$} if $\deg h\ge\deg f$ for every hermitian $h\in\px$ such that $\cD_f=\cD_h.$ In this section we present examples of hermitian polynomials $f$ such that $\cD_f$ is a free spectrahedron, $f$ is a minimal degree defining polynomial for $\cD_f$, and $f$ is of degree more than two. By Theorem \ref{thm:main} such an $f$ necessarily factors, even if $\cD_f$ corresponds to an \irrL pencil. The construction of such $f$ relies on the following lemma.

\begin{lem}\label{l:exa}
Suppose   $f_1,s\in \px$ are \irrs  and $L$ is a  hermitian monic pencil.
If\looseness=-1
\ben[label={\rm(\arabic*)}]
\item \label{i:exa1}$s(0)=1=f_1(0)$ and $\deg f_1>2$;
\item \label{i:exa2}
$\cZ_{f_1}=\cZ_L$ and thus $\cK_{f_1}=\cD_L$;
\item \label{i:exa3} $s$ is hermitian;
\item \label{i:exa4}
$f_1 s = s f_1^* $;
\item\label{i:exa5}
$s(X,X^*) \succ 0$ for all $(X,X^*) \in \cD_L$,
\een
then $f:=f_1s$ is hermitian and $\cD_f= \cD_L$. Furthermore, a minimal degree defining polynomial for $\cD_f$ has degree at least $1+\deg f_1$.
\end{lem}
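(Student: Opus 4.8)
Hermitianity of $f$ is immediate from \ref{i:exa3} and \ref{i:exa4}: $f^*=(f_1s)^*=s^*f_1^*=sf_1^*=f_1s=f$. For the identity $\cD_f=\cD_L$ the plan is a connected-component argument resting on $\det f=(\det f_1)(\det s)$. First I would note that the connected component $\cO$ of the origin in $\{\det f\neq0\}$ lies in $\{\det f_1\neq0\}$, hence in the connected component of the origin there, whose closure is $\cK_{f_1}=\cD_L$ by \ref{i:exa2}; this yields $\cD_f\subseteq\cD_L$. Conversely, the interior $\{L\succ0\}$ of $\cD_L$ is convex, hence connected, contains the origin, and on it one has $\det f_1\neq0$ (because $\cZ_{f_1}=\cZ_L$ and $L\succ0$ precludes $\det L=0$) and $\det s\neq0$ (by \ref{i:exa5}); so $\{L\succ0\}\subseteq\cO$, and passing to closures, using that a free spectrahedron is the closure of its interior, gives $\cD_L\subseteq\cD_f$.

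For the degree assertion, fix a hermitian $h\in\px$ with $\cD_h=\cD_f$ and, after normalizing, $h(0)=1$; I must show $\deg h\geq1+\deg f_1$. First, $\cD_L$ is proper: otherwise $f_1$ would be a unit (since $\cD_L=\cD_I$ forces $L\equiv I$, hence $\cZ_{f_1}=\cZ_L=\varnothing$, and then Remark~\ref{r:facts}\ref{it:rf4} applied to a minimal FM realization of $f_1^{-1}$ makes $f_1^{-1}$ a polynomial), contradicting $\deg f_1>2$. Choosing a minimal hermitian monic pencil $L_0$ with $\cD_{L_0}=\cD_L$, Proposition~\ref{p:DZ}\ref{i:DZ3} gives $\cZ_{f_1}=\cZ_{L_0}$ (as $f_1$ is an atom) and Proposition~\ref{p:DZ}\ref{i:DZ2} gives $\cZ_h\supseteq\cZ_{L_0}=\cZ_{f_1}$. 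Since $f_1$ is an atom and $\cD_L$ is proper, $\det f_1(\Omega^{(n)})$ is a non-constant irreducible polynomial for all large $n$ by Proposition~\ref{p:irrpoly}; vanishing on $\cZ_{f_1}(n)\subseteq\cZ_h(n)$, it divides $\det h(\Omega^{(n)})$.

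The conclusion then follows from a degree count in the commuting entries of $\Omega^{(n)}$. Trivially $\deg\det h(\Omega^{(n)})\leq n\deg h$. For a lower bound on $\deg\det f_1(\Omega^{(n)})$, let $F$ be the leading homogeneous part of $f_1$; the top-degree part of $\det f_1(\Omega^{(n)})$ is $\det F(\Omega^{(n)})$, of degree $n\deg f_1$, and it is not identically zero for large $n$ because $F$ is a nonzero noncommutative polynomial---hence invertible in the universal skew field of fractions---so $F(\Omega^{(n)})$ is invertible over the field of rational functions for $n$ large. Thus $n\deg f_1\leq\deg\det f_1(\Omega^{(n)})\leq\deg\det h(\Omega^{(n)})\leq n\deg h$, so $\deg h\geq\deg f_1$. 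Were $\deg h=\deg f_1$, divisibility together with $\det f_1(0)=1=\det h(0)$ would force $\det h(\Omega^{(n)})=\det f_1(\Omega^{(n)})$ for all large $n$, so $\cZ_h=\cZ_{f_1}$ would be irreducible and $h$ an atom by Proposition~\ref{p:irrpoly}; but then $h$ is a hermitian atom with $h(0)=1\succ0$ and $\cD_h=\cD_L$ proper and convex, so Theorem~\ref{thm:main} gives $\deg h\leq2$, contradicting $\deg h=\deg f_1>2$. Hence $\deg h\geq1+\deg f_1$.

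The step I expect to be the main obstacle is the inequality $\deg\det f_1(\Omega^{(n)})\geq n\deg f_1$: it cannot be extracted from the size of a minimal realization of $f_1^{-1}$ (which may strictly exceed $\deg f_1$), and genuinely rests on the standard ``generic fullness'' fact that a nonzero noncommutative polynomial has non-vanishing determinant under matrix evaluations of sufficiently large size. Everything else is a routine assembly of the propositions quoted above.
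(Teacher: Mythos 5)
Your proof is correct, and your argument for the hermitianity of $f$ and for $\cD_f=\cD_L$ coincides with what the paper implicitly does (the paper compresses this to one sentence). For the degree bound, however, you take a genuinely different route. The paper invokes \cite[Theorem 4.3(3)]{HKV}: since $f_1$ is an atom and $\cZ_{f_1}\subseteq\cZ_h$, the polynomial $h$ has an atomic factor of degree $\deg f_1$; thus $\deg h\ge\deg f_1>2$, so Theorem~\ref{thm:main} shows $h$ is not an atom, whence $h$ has a second nontrivial factor and $\deg h\ge 1+\deg f_1$. You instead deduce $\deg h\ge\deg f_1$ from a commutative degree count: $\det f_1(\Omega^{(n)})$ is irreducible of degree exactly $n\deg f_1$ and divides $\det h(\Omega^{(n)})$, which has degree at most $n\deg h$; equality $\deg h=\deg f_1$ would force $\det h(\Omega^{(n)})=\det f_1(\Omega^{(n)})$, so $h$ an atom by Proposition~\ref{p:irrpoly}, contradicting Theorem~\ref{thm:main}. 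The ingredient you flag as the main obstacle --- that $\det F(\Omega^{(n)})\not\equiv 0$ for large $n$, where $F$ is the leading form of $f_1$ --- is indeed the crux: it is the standard fact that a nonzero element of $\px$ is a full $1\times 1$ matrix over the free algebra and hence remains invertible over $\mat{n}(\C(\omega))$ for $n$ large. That fact is of comparable depth to the HKV factorization theorem the paper uses, so your route is not more elementary, but it proves only the weaker numerical bound $\deg h\ge\deg f_1$ rather than the structural statement that $h$ has an atomic factor of that degree. A small point in your favor: you verify explicitly that $\cD_L$ is proper, a hypothesis required by Theorem~\ref{thm:main} that the paper leaves implicit.
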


\begin{proof}
The polynomial $f$ is hermitian by items \ref{i:exa3} and \ref{i:exa4},
 and $\cD_f= \cD_L$ holds by item \ref{i:exa2} and \ref{i:exa5}. 
Now let $h$ be an arbitrary hermitian polynomial satisfying $\cD_h=\cD_f.$ 
Let $\tL$ denote a minimal hermitian monic pencil such that
$\cD_{\tL} = \cD_L$. 
By Lemma \ref{p:DZ}\ref{i:DZ2} $\cZ_h\supseteq \cZ_{\tL}$. 
Since $\cK_{f_1} = \cD_{\tL}$, $f_1$ is an atom and $\tL$ is minimal,
$\cZ_{f_1}=\cZ_{\tL}$. 
Thus $\cZ_h\supseteq \cZ_{f_1}$. 
Since $f_1$ is an \irr, 
 $h$ has an \irra factor of degree $\deg f_1$ by \cite[Theorem 4.3(3)]{HKV}. Thus the degree of $h$ exceeds two by item \ref{i:exa1}. 
Hence $h$  is not an \irr by Theorem \ref{thm:main}.  It follows that $\deg h\ge 1+\deg f_1$.
\end{proof}

\begin{rem}
In general, Corollary \ref{cor:sacvx} implies that $f\in\px$ with $f(0)\neq0$ has convex $\cK_f$ if and only if it admits a complete factorization $f=s_0f_1s_1\cdots f_\ell s_\ell$, where $\cK_{f_k}$ are convex (such $f_k$ are characterized in Section \ref{s:flip}) and $s_0\cdots s_\ell$ is invertible on $\bigcap_k \cK_{f_k} = \cK_f$.
\end{rem}

For the rest of this section let $g=1$ and $x=x_1$.

\subsection{Example of degree 4}\label{ss:e1} 

Let
$$f_1 =1 + x + x^* - 2xx^*-(x + x^*)xx^*,
       \qquad  s = 1 + \frac12(x+x^*) $$
 and
$$ L = \bem 1 + x + x^* & 0 & x \\ 0 & 1 & x \\ x^* & x^* & 1 \eem .$$
Let us sketch how to verify the assumptions of Lemma \ref{l:exa}. Clearly, $s$ is an \irr and 
items \ref{i:exa1} and \ref{i:exa3} of Lemma \ref{l:exa} hold.  Using standard realization algorithms (e.g. as in \cite{BGM05}) one checks that $L$ appears in a minimal realization of $f_1^{-1}$. Moreover, a direct computation shows that $L$ is \irrL. Hence $f_1$ is an \irr by Proposition \ref{p:FM}\ref{it:indec}, and item \ref{i:exa2} holds by Proposition \ref{p:FM}\ref{FM1}. Next, item \ref{i:exa4} is straightforward to verify. Finally, for every $(X,X^*)\in\cD_L$ we have $I+X+X^*\succeq0$ and consequently $I+\frac12(X+X^*)\succ0$, so item \ref{i:exa5} holds. 

By Lemma \ref{l:exa}, $f=f_1s$ is hermitian with $\cD_f=\cD_L$, and $f$ is a minimal degree defining polynomial for $\cD_f$ since $\deg f=4=\deg f_1+1$. Note that
$$\{(X,X^*)\colon f(X,X^*)\succeq0 \}\neq\cD_L$$
in this case.

\subsection{Example of degree 5 or 6}\label{ss:e2} 

Let
\begin{align*}
f_1 &= 
1 - (x + x^*) - 2 (x + x^*)^2 - 2 x^*  x  + 
(x+x^*)^3 + 2 
(x+x^*)^2x^*x,
\\
s &= 1 - (x+x^*)^2
\end{align*}
and
$$L = \bem
1 - \frac12(x+x^*) & -\sqrt{2}(x+x^*) & \frac12(x+x^*) & x^* \\[1mm]
-\sqrt{2}(x+x^*) & 1 & 0 & 0 \\[1mm]
\frac12(x+x^*) & 0 & 1 - \frac12(x+x^*) & -x^* \\[1mm]
x & 0 & -x & 1
\eem .$$
As in the previous example the only item of Lemma \ref{l:exa} that is not simple to verify
is \ref{i:exa5}.
Observe that the upper $2\times2$ block of $L$ depends only on the hermitian variable $h=x+x^*$. 
The same holds for $s=1-h^2$.
Hence it suffices to see that $s>0$ on $\cD_{L}(1)$, which is true since
$$\det\begin{pmatrix}1-\frac{\rho}{2} & -\sqrt{2}\rho \\ -\sqrt{2}\rho & 1\end{pmatrix}\ge 0
\quad\implies\quad 1-\rho^2>0$$
for $\rho\in\R$. If $f=f_1s$, then $\cD_f$ is a free spectrahedron domain whose minimal degree defining polynomial has degree at least 5. Note that $\deg f=6$, but we do not know whether $f$ is a minimal degree defining polynomial.

Of course, by taking a Schur complement of $L$ 
we obtain a quadratic $2\times2$ noncommutative polynomial $q$ with $\cD_q=\cD_L$:
\[
q= 
\begin{pmatrix}
1-\frac{x}{2}-\frac{x^*}{2}-2 x^2-2 xx^*-3 x^*x-2
   (x^*)^2 &
   \frac{x}{2}+\frac{x^*}{2}+x^*x \\
 \frac{x}{2}+\frac{x^*}{2}+x^*x &
   1-\frac{x}{2}-\frac{x^*}{2}-x^*x \\
\end{pmatrix}.
\]

\subsection{High degree \protect{\irrs} with convex \protect{$\cK_f$}}\label{ss:e4} 

In the previous two subsections we obtained \irrs $f_1$ of degree $3,4$ 
with convex $\cK_{f_1}$ in agreement with the degree at most four
 conclusion of the main result of \cite{DHM07}.
Nevertheless, it is easy to construct examples of such polynomials $f$ of high degree.

For example, let
\begin{multline*}
f=1 + 4 (x +  x^*) + 2 (x  ^2 +(x^*)^2)- x   x^*  - 7 x   x^* ( x +   x^*)
 - 4 x^*   x  ( x +  x^* )\\ - x   x^*  ( x^2 +  (x^*)^2 )  
 + 2 x   x^*   (x   x^* +
              x^*   x   ) (x+x^*).
\end{multline*}
That  $\cK_f=\cD_L$, where
\[
L=
\begin{pmatrix}
1-x-x^* & x & -x-x^* & x & -x & x+x^* \\
 x^* & 1 & 0 & 0 & 0 & 0 \\
 -x-x^* & 0 & 1+x+x^* & -x & x & -x-x^* \\
 x^* & 0 & -x^* & 1 & 0 & 0 \\
 -x^* & 0 & x^* & 0 & 1 & 0 \\
 x+x^* & 0 & -x-x^* & 0 & 0 & 1+2 x+2 x^*
 \end{pmatrix},
\]
can be checked using realization theory.

\subsection{Counterexample to a one-term Positivstellensatz}\label{ss:e3} 

One might hope that for polynomials whose semialgebraic sets are spectrahedra, there exists a one-term Positivstellensatz (cf. \cite[Theorem 1.1]{HKM12}), meaning: if $\cD_f=\cD_L$ for a hermitian polynomial $f$ with $f(0)>0$ and a $d\times d$ hermitian monic pencil $L$, then there exists 
$W\in\matt{d\times d}{\px}$ such that
\begin{equation}\label{e:posss}
I_d\otimes f=f\oplus\cdots\oplus f=W^*LW.
\end{equation}
We note that such a conclusion holds 
for $f$ that are real parts of a noncommutative analytic function under natural irreducibility and minimality assumptions on $L$.  For a proof we refer the gentle reader to \cite{AHKM18}, where this fact  is exploited to characterize
bianalytic maps between free spectrahedra.
However, with Example \ref{ss:e1} we shall demonstrate that \eqref{e:posss} does not hold in general. 

Let us assume the notation of Example \ref{ss:e1} and suppose there exists $W\in\px^{3\times 3}$ such that
\begin{equation}\label{e:f3}
\bem f & 0 & 0 \\ 0 & f & 0 \\ 0 & 0 & f \eem = W^*LW.
\end{equation}
Let $\Omega^{(n)}$ and $\Upsilon^{(n)}$ be $g$-tuples of $n\times n$ generic matrices and consider evaluations of $f,W,L$ at $(\Omega^{(n)},\Upsilon^{(n)})$.\index{$\Upsilon^{(n)}$} Taking determinants of both sides of \eqref{e:f3} gives
$$ \left(\det f(\Omega^{(n)},\Upsilon^{(n)})\right)^3 
= \det W^*(\Omega^{(n)},\Upsilon^{(n)})\det L(\Omega^{(n)},\Upsilon^{(n)})\det W(\Omega^{(n)},\Upsilon^{(n)}).$$
Since $\det L(\Omega^{(n)},\Upsilon^{(n)})=\det f_1(\Omega^{(n)},\Upsilon^{(n)})$, 
\begin{equation}\label{e:2v3}
\left(\det f_1(\Omega^{(n)},\Upsilon^{(n)})\right)^2 \left(\det s(\Omega^{(n)},\Upsilon^{(n)})\right)^3 
= \det W^*(\Omega^{(n)},\Upsilon^{(n)})\det W(\Omega^{(n)},\Upsilon^{(n)}).
\end{equation}
Recall that $s = 1 + \frac12(x+x^*)$, so $p=\det s(\Omega^{(n)},\Upsilon^{(n)})$ is an irreducible polynomial for all $n\in\N$. Therefore it divides $\det W^*(\Omega^{(n)},\Upsilon^{(n)})$ or $\det W(\Omega^{(n)},\Upsilon^{(n)})$ by \eqref{e:2v3}. But $s$ is a hermitian polynomial, so $p$ divides $\det W^*(\Omega^{(n)},\Upsilon^{(n)})$ and $\det W(\Omega^{(n)},\Upsilon^{(n)})$. Therefore the left-hand side of \eqref{e:2v3} is divisible by $p^3$ but not by $p^4$, while the highest power of $p$ dividing the right-hand side of \eqref{e:2v3} is even, a contradiction.

\subsection{High degree matrix \protect{\irrs} defining free spectrahedra}\label{s:exb}

It is fairly easy to produce examples of \irrL hermitian matrix polynomials $F$ of arbitrary high degree such that $\cD_F$ is a free spectrahedron. For example, let $p\in\matt{\de}{\px}\setminus \mat{\de}$ be arbitrary and let
$$F=\begin{pmatrix} I & 0 & x \\ 0 & I & p \\ x^* & p^* & I+p^*p \end{pmatrix}.$$
Then $\deg F= 2\deg p$ and $\det F(\Omega^{(n)},\Upsilon^{(n)})=\det (I-\Upsilon^{(n)}\Omega^{(n)})$ is irreducible for all $n\in\N$, so $F$ is an \irr. Further, $\cD_F=\cD_{1-x^*x}$ is a free spectrahedron.

\section{Classifying hermitian flip-poly pencils}
\label{s:flip}

A byproduct of investigations in earlier sections is a description of hermitian monic flip-poly pencils, which helped us construct Examples \ref{ss:e1}, \ref{ss:e2} and \ref{ss:e4}. Since it is of independent interest, we present it here in more detail.

A $d\times d$ monic pencil $L=I-A\mydot x$ is called 
\df{flip-poly} \cite[Section 5.3]{HKV} if 
\[ 
A_j = N_j + v_j u^*
\]
where the $N_j$ are jointly nilpotent  $d \times d$ matrices  and
$u,v_j\in\C^d$. Such pencils are important for distinguishing free loci of  polynomials among all free loci.

\begin{prop}[{\cite[Corollary 5.5]{HKV}}]\label{p:fp}
The set of free loci of  polynomials coincides with the set
of free loci of flip-poly pencils.

\end{prop}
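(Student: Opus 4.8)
The plan is to prove the two inclusions separately, in each case translating between a scalar polynomial and the pencil appearing in a minimal Fornasini--Marchesini realization of its inverse, using the facts collected in Remark~\ref{r:facts} and Proposition~\ref{p:FM}. The organizing principle is that, for a scalar realization $\rr=1+c^*L^{-1}\bb$ with $L=I-A\mydot z$, the pencil $L+\bb c^*=I-\sum_j(A_j-b_jc^*)z_j$ has jointly nilpotent coefficients exactly when $\rr^{-1}$ is a polynomial, and the splitting $A_j=(A_j-b_jc^*)+b_jc^*$ is precisely the flip-poly shape once $\de=1$, since then $b_jc^*$ is an outer product of vectors in $\C^d$.

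For the inclusion ``free loci of polynomials $\subseteq$ free loci of flip-poly pencils'', I would take $f\in\px$, normalized so that $f(0)=1$, together with a minimal FM realization $f^{-1}=1+c^*L^{-1}\bb$ where $L=I-A\mydot z$ and $c,b_j\in\C^d$. By the inversion formula~\eqref{e:fmi} (Remark~\ref{r:facts}\ref{it:rf5}), $f=1-c^*\big(I-(A-bc^*)\mydot z\big)^{-1}\bb$ is again a minimal FM realization, this time of the polynomial $f$; hence Remark~\ref{r:facts}\ref{it:rf4} forces $N_j:=A_j-b_jc^*$ to be jointly nilpotent. Then $A_j=N_j+b_jc^*$ exhibits $L$ as a flip-poly pencil, with $u=c$ and $v_j=b_j$, while $\cZ_f=\cZ_L$ by Proposition~\ref{p:FM}\ref{FM1}.

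For the reverse inclusion I would run this backwards. Given a flip-poly pencil $L=I-A\mydot z$ with $A_j=N_j+v_ju^*$ and the $N_j$ jointly nilpotent, set $\bb:=\sum_jv_jz_j$, $\rr:=1+u^*L^{-1}\bb$, and $f:=\rr^{-1}$. The inversion formula gives $f=1-u^*(I-N\mydot z)^{-1}\bb$; since the $N_j$ are jointly nilpotent the Neumann series for $(I-N\mydot z)^{-1}$ terminates, so $f$ is a polynomial with $f(0)=1$. Moreover $L+\bb u^*=I-N\mydot z$ again has jointly nilpotent coefficients, so $\det(L+\bb u^*)\equiv1$; the determinantal identity $\det(M+uv^*)=\det(I+v^*M^{-1}u)\det M$ used in the proof of Proposition~\ref{p:FM} then gives $\det\rr=\det(L+\bb u^*)/\det L=1/\det L$, and from $f\rr=1$ we obtain $\det f(\Omega^{(n)})=\det L(\Omega^{(n)})$ for every $n$. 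Hence $\cZ_f=\cZ_L$.

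The realization machinery of Section~\ref{sec:prelim} does essentially all of the work, so I do not anticipate a serious obstacle; the two points I would watch are bookkeeping matters. First, in the forward direction one must track minimality: it is only because $f^{-1}=1+c^*L^{-1}\bb$ is minimal that its inverted realization is minimal and the criterion ``polynomial $\iff$ jointly nilpotent coefficients'' of Remark~\ref{r:facts}\ref{it:rf4} applies. Second, the rank-one constraint is what genuinely ties the argument to the scalar case $\de=1$: for $\de>1$ the term $b_jc^*$ can have rank up to $\de$, so one does not directly obtain a flip-poly pencil, and one should instead observe that the free locus of a matrix polynomial already coincides with that of a scalar one, via the factorization of $\det f(\Omega^{(n)})$ into irreducibles together with Proposition~\ref{p:irrpoly} and the atomic-factorization results of~\cite{HKV}.
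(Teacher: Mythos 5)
Your proposal is a genuine proof of a statement that the paper merely cites from \cite[Corollary 5.5]{HKV} without reproving, so there is no in-paper argument to compare against; what you supply is a self-contained derivation from the realization machinery already set up in Section~\ref{sec:prelim}. The two main inclusions are handled correctly. In the forward direction, starting from a minimal FM realization $f^{-1}=1+c^*L^{-1}\bb$ of a scalar polynomial, Remark~\ref{r:facts}\ref{it:rf5} gives a minimal realization of $f$, Remark~\ref{r:facts}\ref{it:rf4} forces the coefficients $N_j=A_j-b_jc^*$ to be jointly nilpotent, and since $\de=1$ the correction $b_jc^*$ is rank one, so $L$ is flip-poly and $\cZ_f=\cZ_L$ by Proposition~\ref{p:FM}\ref{FM1}. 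In the reverse direction you avoid any appeal to minimality (which would not be available for the ad hoc realization $\rr=1+u^*L^{-1}\bb$) by instead using the determinantal identity directly: $L+\bb u^*=I-N\mydot z$ with jointly nilpotent coefficients has identically unit determinant, so $\det\rr(Z)\det L(Z)=1$ and hence $\det f(\Omega^{(n)})=\det L(\Omega^{(n)})$. This is a clean circumvention of the minimality issue, and the argument is sound.

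The one place you should be careful is the closing aside about matrix polynomials. Your claim that ``the free locus of a matrix polynomial already coincides with that of a scalar one'' is false. Indeed any monic pencil $L$ of size $d\geq 2$ is itself a matrix polynomial, so if this were true every free locus would be a free locus of a scalar polynomial, hence (by what you proved) of a flip-poly pencil --- but being similar to a flip-poly pencil is a genuine restriction on an irreducible monic pencil (this is exactly the content of \cite[Section~5.3]{HKV}, and of \cite[Theorem 3.11]{KV} which pins irreducible pencils down up to similarity). The resolution is that in Proposition~\ref{p:fp}, as in \cite[Corollary 5.5]{HKV}, ``polynomials'' means \emph{scalar} polynomials $f\in\pz$, consistent with the use of this proposition in Section~\ref{s:flip} and with your main argument. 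So your proof is correct as written for the intended statement; just drop the suggestion that the $\de>1$ case reduces to the scalar one, since it does not.
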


In this section we further examine the structure of \textit{hermitian} flip-poly pencils. If $L=I-A\mydot x-A^*\mydot x^*$ is a $d\times d$ flip-poly pencil, then by the definition above there exist jointly nilpotent matrices $N_1,\dots,N_g,\tN_1,\dots,\tN_g$ and vectors $u,v_1,\dots,v_g,\tv_1,\dots,\tv_g$ such that
$$A_j=N_j+v_ju^*,\qquad A_j^*=\tN_j+\tv_ju^*.$$

The following folklore statement is a consequence of  Engel's theorem \cite[Corollary 3.3]{Hum} and the Gram-Schmidt process.

\bel
\label{q:nolan}
    Given jointly nilpotent matrices, there is an orthonormal  basis 
    in which they are simultaneously strictly upper triangular.
\eel

After a unitary change of basis (which preserves the hermitian property of $L$) we can therefore assume that $N_j,\tN_j$ are strictly upper triangular matrices. For every $j$, 
$$0=A_j-(A_j^*)^*= N_j + v_j u^*- \tN_j^* - u \tv_j^*,$$
or equivalently,
\beq
\label{eq:skewu}
N_j - \tN_j^*= u \tv_j^*-v_j u^*.
\eeq
On the left-hand side of \eqref{eq:skewu} there is a matrix with diagonal
identically 0. Looking at the right-hand side of \eqref{eq:skewu} we then obtain 
\beq
\label{eq:diag}
u^k\overline{\tv_j^k}= \overline{u^k} v_j^k,
\eeq
for every $1\le j\le g$ and $1\le k\le d$, where $v^k$ denotes the $k$\textsuperscript{th} component of $v$.

Conversely, let $u,v_1,\dots,v_g\in\C^d$ be arbitrary. Next we choose $\tv_1,\dots,\tv_g$ that satisfy equations \eqref{eq:diag}. Observe that this can always be done: if $u^k\neq0$, then $\tv_j^k$ is determined by $u^k$ and $v_j^k$; and if $u^k=0$, then we can choose an arbitrary value for $\tv_j^k$. Then the matrices $u \tv_j^*-v_j u^*$ have diagonals identically 0. Hence by declaring $N_j$ to be the strictly upper triangular part of $u \tv_j^*-v_j u^*$, we obtain matrices $A_j=N_j+v_ju^*$ such that $L=I-A\mydot x-A^*\mydot x^*$ is flip-poly.

Thus we derived the following result.

\begin{prop}\label{p:hfp}
Let $L=I-A\mydot x-A^*\mydot x^*$. Then $L$ is flip-poly if and only if there exist vectors $u,v_1,\dots,v_g$ such that, after a unitary change of coordinates, $A_j=N_j+v_ju^*$, with $N_j$ being the strictly upper triangular part of the matrix $u \tv_j^*-v_j u^*$, where $\tv_j$ is a vector satisfying
$$\tv_j^k=
\frac{u^k \overline{v_j^k}}{\overline{u^k}}\qquad \text{for } u^k\neq0.$$
\end{prop}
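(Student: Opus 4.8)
The proof is essentially the computation carried out in the paragraphs immediately above, which I would simply organize into the two implications. For the forward direction, suppose $L$ is flip-poly. Viewing $L=I-A\mydot x-A^*\mydot x^*$ as a monic pencil in the $2g$ free variables $x_1,\dots,x_g,x_1^*,\dots,x_g^*$, the definition supplies a common vector $u$, vectors $v_1,\dots,v_g,\tv_1,\dots,\tv_g$, and jointly nilpotent matrices $N_1,\dots,N_g,\tN_1,\dots,\tN_g$ with $A_j=N_j+v_ju^*$ and $A_j^*=\tN_j+\tv_ju^*$. I would first invoke Lemma~\ref{q:nolan} on the full family $\{N_j,\tN_j\}$ to obtain a single orthonormal basis in which all the $N_j$ and all the $\tN_j$ are strictly upper triangular; the associated unitary conjugation preserves the shape $I-A\mydot x-A^*\mydot x^*$, so we may assume this configuration outright. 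Then $A_j-(A_j^*)^*=0$ gives \eqref{eq:skewu}, namely $N_j-\tN_j^*=u\tv_j^*-v_ju^*$; since the left side has zero diagonal, reading off the diagonal of the right side yields \eqref{eq:diag}, which is precisely the asserted formula for $\tv_j^k$ when $u^k\ne0$. Comparing strictly-upper-triangular parts in \eqref{eq:skewu} (the $N_j$ are strictly upper, the $\tN_j^*$ strictly lower) identifies $N_j$ with the strictly upper triangular part of $u\tv_j^*-v_ju^*$, which is the claim.

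For the converse, I would start from arbitrary $u,v_1,\dots,v_g\in\C^d$ and choose $\tv_j$ with $\tv_j^k=u^k\overline{v_j^k}/\overline{u^k}$ wherever $u^k\ne0$ and arbitrary entries elsewhere; this is exactly \eqref{eq:diag}, and it forces each rank-two matrix $u\tv_j^*-v_ju^*$ to have vanishing diagonal. Defining $N_j$ to be its strictly upper triangular part and $A_j:=N_j+v_ju^*$, one checks directly that $A_j^*$ equals $\tN_j+\tv_ju^*$ with $\tN_j$ strictly upper triangular — a one-line computation once one writes $u\tv_j^*-v_ju^*=N_j+(\text{strictly lower part})$ and takes adjoints. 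All the $N_j$ and $\tN_j$ are then strictly upper triangular, hence jointly nilpotent, and the rank-one parts of the $x_j$- and $x_j^*$-coefficients all share the vector $u$; so $L$ is flip-poly.

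Everything here is bookkeeping, so the only point needing genuine care is the interaction imposed by the hermitian structure $A_j^*=(A_j)^*$ between the two coefficient families: I must be sure that the nilpotent parts $N_j$ of the $x_j$-coefficients and $\tN_j$ of the $x_j^*$-coefficients can be put into strictly upper triangular form by one and the same unitary — which is exactly why joint nilpotency of the combined family, and hence Lemma~\ref{q:nolan}, is needed — and, in the converse, that splitting the zero-diagonal matrix $u\tv_j^*-v_ju^*$ into its strictly upper and strictly lower triangular pieces does reconstruct a valid pair $(A_j,A_j^*)$. I do not anticipate any real difficulty beyond this.
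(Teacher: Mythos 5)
Your proposal is correct and follows essentially the same route as the paper: the paper itself derives Proposition~\ref{p:hfp} directly from the computation in the preceding paragraphs (applying Lemma~\ref{q:nolan} to the combined nilpotent family, reading off the diagonal of \eqref{eq:skewu} to get \eqref{eq:diag}, and then reversing the construction). The only point you add beyond what the paper writes out explicitly is the check in the converse that $A_j^*-\tv_ju^*$ is indeed strictly upper triangular; this is a genuine (if small) gap in the paper's exposition and your one-line adjoint argument closes it cleanly.
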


\begin{rem}\label{r:real}
Note that vectors $\tv_j$ in Proposition \ref{p:hfp} are uniquely determined if all the entries of $u$ are nonzero. Furthermore, if one is only interested in symmetric pencils, i.e., hermitian pencils with real entries, then the form of $L$ can be further simplified when $u\in(\R\setminus\{0\})^d$. Namely, in this case one has $\tv_j=v_j$ for all $j$. Moreover, since matrices $u v_j^*-v_j u^*$ are skew-symmetric, it follows by \eqref{eq:skewu} that $A_j=A_j^*$ for all $j$. Thus in this situation one has $L=I-A\mydot (x+x^*)$ for symmetric $A_j$; in particular, $\cD_L$ is unbounded. 
Of course, in general not every symmetric flip-poly pencil is of this form, see Examples \ref{ss:e1} and \ref{ss:e2}.
\qed\end{rem}

\section{Hereditary polynomials}\label{s:hered}

We say that a noncommutative polynomial $f$ is \df{hereditary} if it is a linear combination of words $uv$ with $u\in\mxs$ and $v\in\mx$. Furthermore, $f$ is \df{truly hereditary} if it is not analytic or anti-analytic, i.e., $f\notin \pxa\cup\, \pxs$.
Hereditary polynomials arise naturally in
free function theory \cite{Gre};
they are a tame analog of free real 
analytic functions.
For example,  the composite of
an analytic polynomial (with no $x^*$)
with an hermitian pencil, a heavily
studied class of objects in
the geometry of free convex sets (cf.~\cite{AHKM18}), is hereditary.
Similarly, the hereditary functional calculus
\cite{Agl} is a powerful tool in operator theory
and complex analysis.

In this section we prove the following.

\begin{theorem}
\label{thm:hered}
Let $f$ be a hereditary polynomial and $f(0)=1$. Then $f$ admits a unique factorization
\begin{equation}
\label{eq:phq}
f= phq,\qquad p(0)=h(0)=q(0)=1,
\end{equation}
with $p$ anti-analytic, $q$ analytic, and $h$ a truly hereditary \irr or constant. If $f$ is moreover hermitian, then $q=p^*$ and $h=h^*$.
\end{theorem}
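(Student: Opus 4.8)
The plan is to prove the hereditary factorization theorem by first handling existence, then uniqueness, then the hermitian refinement. Write $f=\sum_{u\in\mxs,\,v\in\mx} c_{uv}\, uv$ with $f(0)=1$. The key structural fact I would exploit is that the free skew field $\rx$ (or rather, the relevant subrings) has good factorization behavior, and that $\px$ is a free ideal ring (fir) so that factorizations into atoms exist and are essentially unique up to similarity by Cohn's theory \cite{Coh06}. First I would extract a maximal anti-analytic left factor: among all factorizations $f=p\tilde f$ with $p\in\pxs$, $p(0)=1$, choose one with $\deg p$ maximal (this is well-defined since $\deg p\le \deg f$); the residual $\tilde f$ then has no nonconstant anti-analytic left factor. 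Symmetrically, extract a maximal analytic right factor $q$ from $\tilde f$, writing $\tilde f = h q$ with $q\in\pxa$, $q(0)=1$, $\deg q$ maximal. One must check that pulling out the analytic right factor does not reintroduce an anti-analytic left factor of $h$ — this is where I expect to use that $h$ remains hereditary (a quotient of a hereditary polynomial by analytic/anti-analytic unit-constant-term factors stays hereditary; this needs an argument about the bidegree grading, tracking the maximal $x^*$-degree on the left and $x$-degree on the right).

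The heart of the matter is then to show the middle factor $h$ is either a truly hereditary atom or a constant. Here I would argue: if $h$ is nonconstant, it factors into atoms $h=h_1\cdots h_m$ in $\px$. Each $h_i$ inherits being hereditary (again by the grading argument: a factor of a hereditary polynomial, once we've arranged the factorization compatibly, is hereditary). By construction $h$ has no nonconstant analytic right factor and no nonconstant anti-analytic left factor. If some $h_i$ with $i\ge 2$ were analytic, then since analytic polynomials form a subring closed under the relevant operations, one could migrate it rightward (using that $\px$ restricted to hereditary elements has the property that analytic atoms commute past truly hereditary ones up to the factorization structure) to produce a nonconstant analytic right factor of $h$ — contradiction. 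Similarly no $h_i$ with $i<m$ can be anti-analytic. Combined with the maximality of $p$ and $q$, this forces $m=1$ and $h=h_1$ is a single atom; and it is truly hereditary, since if it were analytic or anti-analytic it would have been absorbed into $q$ or $p$ respectively. The main obstacle I anticipate is making the "migration" of analytic/anti-analytic atomic factors past the truly hereditary core rigorous: one needs that the only atoms of $\px$ dividing a hereditary polynomial on a given side that are themselves analytic (resp. anti-analytic) can be collected on the right (resp. left), which I would establish by a careful bidegree/leading-term analysis, or by invoking uniqueness of atomic factorization up to similarity together with the observation that an analytic atom is not similar to a non-analytic one.

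For uniqueness, suppose $f=p_1h_1q_1=p_2h_2q_2$ are two such factorizations. Comparing them in $\px$ and using uniqueness of factorization into atoms up to similarity, the atomic factors of $p_1,p_2$ (all analytic) must match up, those of $q_1,q_2$ (all analytic — careful, $q$ is analytic and $p$ is anti-analytic, so they lie in different subrings and cannot be confused with each other or with a truly hereditary $h$), and $h_1,h_2$ must be similar. Then I would upgrade "similar" to "equal": because we have normalized all constant terms to $1$ and the factors live in complementary graded pieces, the similarity transformation must be trivial. Concretely, a similarity $h_1 = a h_2 b$ with $ab$ a unit (in the matrix sense over $\px$, here scalars) and a matching rearrangement of $p$'s and $q$'s, together with the constant-term normalization and the maximality of the anti-analytic left degree and analytic right degree, pins everything down. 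This uniqueness-up-to-units-then-normalize step is routine once the atomic matching is in place.

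Finally, for the hermitian refinement: assume $f=f^*$. Then $f = phq = f^* = q^* h^* p^*$, and $q^*$ is anti-analytic with $q^*(0)=1$, $p^*$ is analytic with $p^*(0)=1$, and $h^*$ is hereditary — indeed $h^*$ is a truly hereditary atom (or constant), since $h\mapsto h^*$ preserves atomicity (the involution is an anti-automorphism of $\px$) and preserves the truly-hereditary property. So $f = q^* h^* p^*$ is another factorization of $f$ of the required form. By the uniqueness just proved, $p = q^*$, $h = h^*$, and $q = p^*$ (the last being equivalent to the first). This completes the argument. I'd flag once more that the single genuinely delicate point throughout is the closure-under-factors property for hereditary polynomials and the attendant bidegree bookkeeping; everything else is an application of Cohn's factorization theory for firs as recorded in the preliminaries.
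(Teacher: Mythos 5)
Your existence argument is in the right spirit but is organized differently from the paper's, and more importantly your uniqueness argument has a genuine gap.

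For existence, the paper's Lemma~\ref{l:fac} (which you correctly flag as the "delicate point," but never prove) is precisely what makes things work. The first half says: if $f$ is hereditary and $f=ab$ with $a\notin\pxs$, then $b\in\pxa$. Granting that, your greedy scheme closes cleanly and the "migration" device is unnecessary: once you have $f=phq$ with $p\in\pxs$ maximal on the left and $q\in\pxa$ maximal on the right, suppose $h=h_1h_2$ with both factors nonconstant; by Lemma~\ref{l:fac} either $h_1\in\pxs$ or $h_2\in\pxa$, contradicting maximality of $p$ or of $q$. So $h$ is an atom (or constant), and it is truly hereditary for the same reason. Your proposed "migration of analytic atoms past truly hereditary ones" is both unproven and beside the point; the direct two-factor splitting of Lemma~\ref{l:fac} does all the work. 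Note also that your intermediate claim that ``each $h_i$ inherits being hereditary'' is not what Lemma~\ref{l:fac} gives (the second half only controls the factor sandwiched between a $\pxs$-factor and a $\pxa$-factor), and it is not needed. The paper's own existence proof avoids the greedy extraction entirely: start from any complete atomic factorization $f=q_1\cdots q_s$, locate the largest analytic suffix, and observe that Lemma~\ref{l:fac} forces the prefix to lie in $\pxs$ and the single remaining atom to be hereditary.

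For uniqueness, "atoms match up to similarity by Cohn, then upgrade similar to equal by normalization" is not a proof. Cohn's theorem (Proposition~\ref{prop:cohn2}) states that two complete factorizations of the same element are connected by a \emph{finite chain of comaximal transpositions on adjacent pairs}, and each such transposition genuinely changes the adjacent factors while preserving only their product. The question of whether the products $p$, $h$, $q$ are preserved reduces exactly to whether a comaximal transposition can cross the boundary between the $p$-block and $h$, or between $h$ and the $q$-block. This is the content of the paper's Lemma~\ref{l:notrans}: if $\ell h=f_1f_2$ is a comaximal relation with $\ell\in\pxs$ and $h$ hereditary, then $f_1,f_2,h\in\pxs$ (so a truly hereditary nonconstant $h$ cannot participate). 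That lemma in turn rests on \cite[Proposition~5.11]{HKV}, which says that a polynomial stably associated to an anti-analytic one must itself be anti-analytic --- a determinant/genericity argument, not a bidegree bookkeeping exercise. Your appeal to "complementary graded pieces" and "constant-term normalization" does not substitute for this: similarity (stable association) already respects constant-term normalization, and knowing that the \emph{multiset} of similarity classes on each side of $h$ agrees still does not by itself show the products agree, absent the transposition-cannot-cross-$h$ fact.

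Your hermitian refinement is the same as the paper's and is fine once uniqueness is in hand. In sum: patch the existence proof by proving Lemma~\ref{l:fac} and replacing the migration heuristic with the direct two-factor splitting, and fill the uniqueness gap by establishing and invoking the analogue of Lemma~\ref{l:notrans} within Cohn's comaximal-transposition framework.
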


The normalization $p(0)=h(0)=q(0)=1$ is only required to avoid ``uniqueness up to scaling''. Before giving a proof of Theorem \ref{thm:hered} we record the following corollary.

\begin{cor}
\label{cor:heredDefpoly}
Any hereditary minimal degree defining polynomial for a free spectrahedron  is an \irr,
and hence has degree at most 2.
\end{cor}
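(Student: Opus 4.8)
The plan is to combine the canonical factorization of Theorem~\ref{thm:hered} with the rigidity of free loci. Let $f$ be a hereditary minimal degree defining polynomial for a free spectrahedron $\cD_f$, normalized so that $f(0)=1$; we may assume $\cD_f$ is proper, since otherwise the minimal degree defining polynomial is the constant $1$ and there is nothing to prove. By Theorem~\ref{thm:hered} we may write $f=php^*$ with $p\in\pxs$ anti-analytic, $p(0)=1$, and $h=h^*$ either a truly hereditary \irr or a constant. If $h$ were constant, then $\det f(X,X^*)=\abs{\det p(X^*)}^2$, whose zero set has real codimension two because $\det p(X^*)$ is anti-holomorphic in $X$; hence $\{X:\det f(X,X^*)\neq0\}$ would be connected with dense closure, forcing $\cD_f=\cD_I$, contrary to properness. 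So $h$ is a truly hereditary \irr. The whole problem now reduces to showing that $p$ is constant: once $f=h$, it is a hermitian \irr with $f(0)\succ0$ and $\cD_f$ proper and convex, so Theorem~\ref{thm:main} gives $\deg f\le2$.

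To show $p$ is constant I will first prove $\cD_h=\cD_f$. Let $L=\bigoplus_i L^i$ be a minimal hermitian monic pencil with $\cD_f=\cD_L$, decomposed into irredundant \irrL hermitian monic blocks via Proposition~\ref{p:mp}; each $L^i$ has size at least one, hence nonzero linear coefficients. Working in the variables $z=(x,x^*)$, the factorization $f=php^*$ yields $\cZ_f=\cZ_p\cup\cZ_q\cup\cZ_h$ with $q=p^*$, where $\cZ_p(n)$ cuts out a condition on the $x^*$-coordinates only and $\cZ_q(n)$ a condition on the $x$-coordinates only. By Proposition~\ref{p:DZ}\ref{i:DZ2} we have $\cZ_L\subseteq\cZ_f$, so for $n$ large each $\cZ_{L^i}(n)$, an irreducible hypersurface by Lemma~\ref{l:irr}, is an irreducible component of one of $\cZ_p(n)$, $\cZ_q(n)$, $\cZ_h(n)$. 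It cannot be a component of $\cZ_p(n)$: that would make $\det L^i$ a polynomial in the $x^*$-variables only, and evaluating at the origin would then force $\det(I-A^i\mydot X)\equiv1$ identically in $X$, hence $A^i_j=0$ for every $j$, contradicting that $L^i$ is genuinely \irrL; by symmetry it is not a component of $\cZ_q(n)$ either. Since $h$ is an \irr, $\cZ_h(n)$ is irreducible for large $n$ by Proposition~\ref{p:irrpoly}, so each $\cZ_{L^i}(n)$ equals this single component. Therefore all the $\cZ_{L^i}$ coincide; irredundancy then leaves a single block and $\cZ_L=\cZ_h$. Proposition~\ref{p:DZ}\ref{i:DZ1} now gives $\cK_h=\cD_L=\cD_f$.

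Now $h$ is hermitian by Theorem~\ref{thm:hered} and satisfies $\cD_h=\cD_f$, so minimality of the degree of $f$ forces $\deg f\le\deg h$. But $\deg f=2\deg p+\deg h$, whence $\deg p=0$, i.e.\ $p=1$ and $f=h$. Thus $f$ is an \irr, and the degree bound $\deg f\le2$ follows from Theorem~\ref{thm:main} as observed above.

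I expect the second paragraph to contain the only real difficulty. The delicate point is ruling out an irreducible component of $\cZ_L$ being hidden inside $\cZ_p$ or $\cZ_q$; this rests on the elementary but crucial fact that an \irrL hermitian monic pencil necessarily involves both $x$ and $x^*$ nontrivially, so its determinant is not a polynomial in only the analytic or only the anti-analytic variables. The one other thing to dispatch at the outset is the degenerate case $\cD_f=\cD_I$, where the minimal degree defining polynomial is the constant $1$.
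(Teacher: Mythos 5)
Your overall strategy tracks the paper's proof closely: decompose $f$ via Theorem~\ref{thm:hered}, pass to the minimal pencil $L$, argue each irreducible block's free locus must sit inside $\cZ_h$ rather than $\cZ_p$ or $\cZ_{p^*}$, deduce $\cD_h=\cD_f$, and close with degree-minimality and Theorem~\ref{thm:main}. Your explicit treatment of the case $h$ constant is a slight improvement in exposition, and the last paragraph (graded degree additivity giving $\deg p=0$) is a clean substitute for the paper's ``since $f$ is of minimal degree, $a=1$.''

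There is, however, a genuine gap in the key exclusion step. You argue: if $\cZ_{L^i}(n)$ were a component of $\cZ_p(n)$, then $\det L^i$ depends only on the $x^*$-variables; setting $x^*=0$ gives $\det(I-A^i\mydot X)\equiv1$, \emph{hence $A^i_j=0$}. The emphasized implication is false. The identity $\det(I-A^i\mydot X)\equiv1$ for all $X$ only yields that the $A^i_j$ are jointly nilpotent (this is exactly \cite[Proposition~3.3]{KV}), not that they vanish; e.g.\ $A^i_1=\left(\begin{smallmatrix}0&1\\0&0\end{smallmatrix}\right)$ gives $\det(I-A^i_1\otimes X_1)\equiv1$ with $A^i_1\neq0$. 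To finish, one must push further: joint nilpotency of the $A^i_j$ forces joint nilpotency of the $(A^i_j)^*$ (adjoints reverse products), so $\det(I-(A^i)^*\mydot Y)\equiv1$ too, whence the assumed $\Upsilon$-only polynomial $\det L^i$ is actually constant; then $\cZ_{L^i}=\varnothing$ forces the full tuple $(A^i_j,(A^i_j)^*)$ to be jointly nilpotent, and Lemma~\ref{q:nolan} together with the hermitian structure (in an orthonormal basis each $(A^i_j)^*$ is simultaneously strictly upper and strictly lower triangular) finally gives $A^i_j=0$ and the contradiction with minimality/irreducibility. This is the chain the paper compresses into its citation of \cite[Proposition~3.3]{KV}; the single implication you state does not suffice on its own.
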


\begin{proof}
Let $f$ be hereditary and minimal degree defining polynomial for $\cD_f$, and let $\cD_f=\cD_L$ for a minimal hermitian monic pencil $L$. Therefore $\partial\cD_L\subseteq \cZ_f$ and hence $\cZ_L\subseteq \cZ_f$ by Proposition \ref{p:pd-min}. Furthermore, after a unitary change of basis we can assume that $L=L^1\oplus\cdots\oplus L^\ell$, where the $L^i$ are pairwise non-similar \irrL hermitian pencils. Observe that for each $i$ and large enough $n$, the polynomial $\det L^i(\Omega^{(n)},\Upsilon^{(n)})$ is irreducible by Proposition \ref{p:irrpoly} and cannot be independent of $\Omega^{(n)}$ or $\Upsilon^{(n)}$ by \cite[Proposition 3.3]{KV}, where $\Omega^{(n)}$ and $\Upsilon^{(n)}$ are $g$-tuples of $n\times n$ generic matrices corresponding to evaluating variables $x$ and $x^*$, respectively, in an involution-free way.

By Theorem \ref{thm:hered},  $f = a^* h a$, where $a$ is analytic (contains only variables $x$), and $h$ is a hermitian hereditary \irr. Since
$$\cZ_{L^1}\cup\cdots\cup\cZ_{L^\ell}=\cZ_L\subseteq \cZ_f=\cZ_a\cup\cZ_h\cup\cZ_{a^*},$$
for each $i$ and large enough $n$, the irreducible polynomial $\det L^i(\Omega^{(n)},\Upsilon^{(n)})$ divides one of the polynomials $\det a^*(\Upsilon^{(n)})$, $\det h(\Omega^{(n)},\Upsilon^{(n)})$ or $\det a(\Omega^{(n)})$. By the previous paragraph, it cannot divide the first one or the last one, so $\cZ_{L^i}\subseteq \cZ_h$.
Since $h$ is an \irr it follows that $\cZ_{L^i} = \cZ_h$. Because the $L^i$ are pairwise non-similar \irrL pencils, we necessarily have $\ell=1$, so $L$ is \irrL. Therefore $\cD_h = \cD_L$ by Proposition \ref{p:DZ}\ref{i:DZ3}. Thus,  $h$ is concave of degree at most two by Theorem \ref{thm:main}. Finally, since $f$ is of minimal degree, $a = 1$ and $f = h$.
\end{proof}

\begin{cor}
\label{cor:hered2}
If $q\in\pxa$ and $\cD_{q+q^*}$ is a free spectrahedron, then $\deg(q)\leq1$.
\end{cor}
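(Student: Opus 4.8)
The plan is to strip $f:=q+q^*$ down to a hereditary \irr and then apply Theorem~\ref{thm:main}. Rescaling $q$ by $\big(q(0)+\overline{q(0)}\big)^{-1}$, which is legitimate because $\cD_{q+q^*}$ being a free spectrahedron forces $q(0)+\overline{q(0)}\neq0$, I may assume $f(0)=1$; then $f=f^*$ is hereditary, so Theorem~\ref{thm:hered} supplies a factorization $f=php^*$ with $p\in\pxs$, $p(0)=h(0)=1$, $h=h^*$, and $h$ either constant or a truly hereditary \irr.

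The crux is to show $p=1$. If $\deg p=m\ge1$, then, since $\px$ has no zero divisors, the top-degree homogeneous component of $f$ equals $\high{p}\,\high{h}\,(\high{p})^*\neq0$; but every monomial of $\high p\in\pxs$ is a nonempty word in the variables $x^*$ and every monomial of $(\high p)^*$ is a nonempty word in the variables $x$, so every monomial of $\high p\,\high h\,(\high p)^*$ involves both some $x_i^*$ and some $x_j$. On the other hand $f=q+q^*$ with $q\in\pxa$ contains no such ``mixed'' monomial, and hence neither does its top-degree component --- a contradiction. So $p=1$ and $f=h$. If $h$ is constant then $h=1$, whence $q$ is constant (the analytic and anti-analytic parts of $q+q^*=1$ vanish separately) and $\deg q=0$.

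There remains the case that $f=h$ is a truly hereditary \irr; then $f(0)\succ0$ and $\cD_f$ is convex. If $\cD_f$ is proper, Theorem~\ref{thm:main} gives $\deg f\le2$ and $f$ concave. Writing the degree-$2$ part of the harmonic polynomial $f=q+q^*$ as $q_2+q_2^*$ with $q_2\in\pxa$ homogeneous of degree $2$, concavity of $f$ amounts to $q_2(H)+q_2(H)^*\preceq0$ for every tuple $H$; substituting $H\mapsto\zeta H$ for $\zeta\in\C$ and running $\zeta$ through $\{1,\,i,\,e^{\pm i\pi/4}\}$ forces $q_2(H)=0$ for all $H$, hence $q_2=0$ and $\deg q\le1$. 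In the remaining case $\cD_f=\cD_I$ one has $f(X,X^*)\succeq0$ for all $X$ (by connectedness of the component of the origin together with density), so, being globally positive semidefinite, $f$ is a sum of hermitian squares $f=\sum_ig_i^*g_i$; comparing top-degree components exactly as in the $p=1$ step --- the degree-$2D$ part of $\sum_ig_i^*g_i$, with $D=\max_i\deg g_i$, consists solely of mixed monomials while $f$ has none --- forces $D=0$, so $f$, and therefore $q$, is constant. (This last subcase can alternatively be read off from \cite[Remark~5.1]{KPV17}.)

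I expect the step $p=1$ to be the main obstacle: it is brief, but it is precisely there that the harmonicity of $q+q^*$ must be played against the rigidity of the factorization in Theorem~\ref{thm:hered} to kill the outer anti-analytic factor. After that, Theorem~\ref{thm:main} carries the load, and the concavity computation together with the $\cD_f=\cD_I$ subcase is routine.
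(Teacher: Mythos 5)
Your proof is correct and reaches the conclusion by the same underlying mechanism as the paper --- reduce to an atom, invoke Theorem~\ref{thm:main}, then exploit the absence of ``mixed'' monomials (words containing both an $x_i$ and an $x_j^*$) in $q+q^*$ --- but the individual steps differ enough to be worth flagging. The paper simply \emph{observes} that $q+q^*$ is an atom whenever $q\in\pxa$ is non-constant (a putative factorization $f_1f_2$ with $f_1\in\pxs$, $f_2\in\pxa$ both non-constant would already produce a mixed top-degree monomial, as in the proof of Lemma~\ref{l:fac}); applies Theorem~\ref{thm:main} to write $q+q^*=\alpha+\ell-\sum_k\ell_k^*\ell_k$; and then notes that a nonzero $\ell_k$ contributes a mixed term $\alpha x_j^*x_j$ or $\alpha x_jx_j^*$ with $\alpha<0$ --- done. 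You instead route through the full hereditary factorization $f=php^*$ of Theorem~\ref{thm:hered} and kill the outer factor $p$ by a top-degree computation (in effect re-proving the paper's ``observation''), and for the quadratic step you use the $\zeta$-substitution $H\mapsto\zeta H$, $\zeta\in\{1,i,e^{\pm i\pi/4}\}$, rather than the explicit concave form supplied by Theorem~\ref{thm:main}; both methods are sound, the paper's being a shade quicker. On the other hand you are more scrupulous about two details the paper elides: the normalization $f(0)=1$, and, more substantively, the non-proper case $\cD_f=\cD_I$, which you dispatch via Helton's sum-of-hermitian-squares theorem, while the paper invokes Theorem~\ref{thm:main} without noting that properness must be checked (it is in fact automatic once $q$ is non-constant --- your SOS argument or a one-line scaling along $\lambda H$ shows it --- but the paper leaves this unsaid). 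Net: correct, a bit longer, slightly more careful about edge cases.
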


\begin{proof}
Observe that $q+q^*$ is an atom in $\px$ for every non-constant $q\in\pxa$. Therefore $q+q^*$ is of degree at most 2 and concave by Theorem \ref{thm:main}, so
$$q+q^*=\alpha+\ell-\sum_k \ell_k^*\ell_k$$
for some $\alpha>0$ and linear polynomials $\ell,\ell_k\in\px$. If some $\ell_k$ is nonzero, then $\ell-\sum_k \ell_k^*\ell_k$ has a term of the form $\alpha x_jx_j^*$ or $\alpha x_j^*x_j$ with $\alpha<0$. On the other hand, there are no mixed terms in $q+q^*$, so we conclude that $\ell_k=0$ for all $k$. Therefore $q$ is affine linear.
\end{proof}

\def\vk{{\widehat k}}
\def\vell{{\widehat \ell}}
\def\vh{{\widehat h}}  
\def\vp{{\widehat p}}
\def\vq{{\widehat q}}

\ssec{Proof of existence of the factorization \protect{(\ref{eq:phq})}}

\begin{lem}\label{l:fac}
 Suppose $f$ is hereditary and $f=pq$. If $p\notin \pxs$, then $q\in \pxa$.
 If $f=a^*hb$ and $a,b\in \pxa$, then $h$ is hereditary.
\end{lem}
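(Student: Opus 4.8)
\textit{Reduction to combinatorics of words.} The whole statement is really about cancellation in products in the free algebra $\px$, so the plan is to work with the monomial support. Recall that a word $w$ in $x,x^{*}$ is hereditary exactly when it has no length‑two factor $x_ix_j^{*}$, equivalently when all starred letters of $w$ precede all unstarred ones; in particular every factor, prefix, or suffix of a hereditary word is hereditary. The one input I would use throughout is the \emph{cancellation‑free principle}: if $P,Q\in\px\setminus\{0\}$ are homogeneous of total degrees $d,e$, then $(u,v)\mapsto uv$ is injective on $\operatorname{supp}P\times\operatorname{supp}Q$ (recover $u$ as the first $d$ letters), so $PQ\neq0$ and $\operatorname{supp}(PQ)=\operatorname{supp}P\cdot\operatorname{supp}Q$ with coefficients $(PQ)_{uv}=P_uQ_v$. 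Hence $PQ$ is hereditary iff every $uv$ ($u\in\operatorname{supp}P$, $v\in\operatorname{supp}Q$) is hereditary, which happens iff $P$ and $Q$ are hereditary and ($P$ is anti‑analytic or $Q$ is analytic). The same remark applies to the parts of $P,Q$ homogeneous in the refined bidegree $(\deg_x,\deg_{x^{*}})$, since a bidegree determines a word's length.

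\textit{First assertion.} I would first establish that both factors of a hereditary polynomial are hereditary: if $f=pq$ is hereditary with $p,q\neq0$, then $p,q$ are hereditary. Applying the cancellation‑free principle to the lowest‑ and highest‑degree homogeneous components of $f=pq$ shows $\mathrm{bot}(p),\mathrm{top}(p),\mathrm{bot}(q),\mathrm{top}(q)$ are hereditary; an induction on $\deg p+\deg q$ that peels the top (or bottom) homogeneous layer off whichever factor is inhomogeneous then upgrades this to all of $p$ and $q$. Granting this, decompose $p=\sum_{(a,b)}p_{(a,b)}$ and $q=\sum_{(a',b')}q_{(a',b')}$ bihomogeneously; since $p,q$ are hereditary, each $p_{(a,b)}$ is a combination of words of the single shape $S^{b}N^{a}$ ($b$ starred then $a$ unstarred letters), and likewise for $q$. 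Now suppose $p\notin\pxs$ and, for contradiction, $q\notin\pxa$: pick $p_{(a_0,b_0)}\neq0$ with $a_0\ge1$ and $q_{(a_0',b_0')}\neq0$ with $b_0'\ge1$. Every word of the cancellation‑free (hence nonzero) product $p_{(a_0,b_0)}q_{(a_0',b_0')}$ has shape $S^{b_0}N^{a_0}S^{b_0'}N^{a_0'}$, which contains the descent $N\!\to\!S$, so is not hereditary; and a short shape analysis shows no other bihomogeneous product contributing to $f_{(a_0+a_0',\,b_0+b_0')}$ produces a word of this shape (any such would force a non‑hereditary prefix or suffix to lie in the hereditary $p_{(a,b)}$ or $q_{(a',b')}$). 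Hence that word survives in $f$, contradicting that $f$ is hereditary; therefore $q\in\pxa$.

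\textit{Second assertion and the obstacle.} Once "factors of a hereditary polynomial are hereditary" is in hand, the second assertion is immediate: from $f=a^{*}(hb)$ hereditary we get $hb$ hereditary, and from $hb=(h)b$ hereditary we get $h$ hereditary (the hypothesis $a,b\in\pxa$ is not even needed). The main obstacle is precisely the heredity‑of‑factors step: for the extreme‑degree components of $pq$ there is no cancellation, but a \emph{middle}‑degree component is genuinely a sum of several homogeneous products, and cancellation among them must be excluded; arranging the induction so that one always peels within an honest product — which is what forces handling the case "both factors inhomogeneous" with care — is the technical heart of the argument.
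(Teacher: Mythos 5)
Your proof of the first assertion hinges on the auxiliary claim that both factors of a hereditary polynomial are hereditary. You flag this yourself as the ``main obstacle'' and the ``technical heart,'' and indeed the sketched induction does not close: when both $p$ and $q$ are inhomogeneous, $f-\mathrm{top}(p)\,\mathrm{top}(q)=\mathrm{top}(p)q'+p'\,\mathrm{top}(q)+p'q'$ is no longer a product, so there is no smaller factorization on which to invoke the inductive hypothesis. The auxiliary claim is in fact true, but proving it directly seems to require the same maximal-degree-word device that the paper applies to the first assertion itself; conversely, once the lemma's two assertions are available the claim follows cheaply (the first assertion forces $p\in\pxs$ or $q\in\pxa$, and the second assertion then handles the remaining factor). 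So routing the first assertion through ``factors of hereditary are hereditary'' risks circularity, and as written leaves a genuine gap.

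The paper's argument sidesteps the issue with a one-shot maximality trick that never needs heredity of $p$ or $q$ themselves. Assuming $p\notin\pxs$ and $q\notin\pxa$, choose a word $\alpha'$ with nonzero coefficient in $p$ containing some unstarred $x_j$, and a word $\beta'$ with nonzero coefficient in $q$ containing some starred $x_k^*$, each of maximal total degree with these properties. Then $\Gamma=\alpha'\beta'$ is not hereditary, so $\sum_{\sigma\tau=\Gamma}p_\sigma q_\tau=0$; since $p_{\alpha'}q_{\beta'}\ne0$ there is another pair $(\sigma,\tau)\ne(\alpha',\beta')$ with $\sigma\tau=\Gamma$ and $p_\sigma q_\tau\ne0$. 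If $|\sigma|>|\alpha'|$ then $\sigma$ begins with $\alpha'$, hence contains $x_j$, contradicting the maximality of $\alpha'$; if $|\sigma|<|\alpha'|$ then $\tau$ ends with $\beta'$, hence contains $x_k^*$, contradicting the maximality of $\beta'$. The second assertion is handled in the paper by the same kind of maximal-degree comparison. Your bidegree/shape analysis, granting the factors-hereditary claim, is sound, and your deduction of the second assertion from that claim is fine; but the proposal is incomplete exactly where you identify, and the natural fix is to replace the peeling induction by the direct maximal-degree argument.
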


\begin{proof}
 To prove the first statement, suppose $p\notin\pxs$ and $q\notin\pxa$.
 Write, $p=\sum p_{\alpha} \alpha$ and $q=\sum q_\beta \beta$. 
 There exists a word $\alpha'$ and a $j$ such that $\alpha'$ contains $x_j$
 and $p_{\alpha'} \ne 0$; 
 and there is a word $\beta'$ and a $k$ such that $\beta'$ contains $x_k^*$ 
 and $q_{\beta'} \ne 0$.  Without loss of  we may assume that 
 the (total) degrees of $\alpha'$ and $\beta'$ are maximal with these properties.
 Now,
\[
  f= \sum_{\gamma} \big(\sum_{\alpha\beta=\gamma} p_\alpha q_\beta \big) \gamma.
\]
Let  $\Gamma = \alpha'\beta'$ and note that this word is not
hereditary.  Thus,
\[
 \sum_{\alpha\beta =\Gamma} p_\alpha q_\beta = 0.
\]
 It follows that there exists words $\sigma$ and $\tau$ such
 that $(\sigma,\tau)\ne (\alpha',\beta')$, $p_\sigma\ne 0,$ $q_\tau \ne 0$ and $\Gamma =\sigma\tau = \alpha'\beta'$.
 It follows that either $\alpha'$ properly divides $\sigma$ on the left,
 in which case $\sigma$ contains $x_j$ and $|\sigma|>|\alpha'|$,
 contradicting the choice of $\alpha'$; or $\beta_m$ properly divides
 $\tau$ on the right, in which case $\tau$ contains $x_k^*$ and $|\tau|>|\beta'|$,
 contradicting the choice of $\beta'$.

 The second statement can be proved in a similar fashion.  Sketching the argument,  write
\[
 h = \sum h_\beta \beta
\]
 and, arguing by contradiction, suppose there is a $\beta'$ such that $h_{\beta'}\ne 0$ 
 has an $x$ to the left of an $x^*$. Let $\alpha'$ and $\gamma'$ denote 
 maximum degree terms in $a^*$ and $b$. It follows that $\alpha' \beta' \gamma'$
 must appear in $a^* h b$ (and has largest degree amongst words in $a^* h b$
 containing an $x$ to the left of an $x^*$) and thus $f$ is not hereditary.
\end{proof}

\begin{proof}[Proof of existence in Theorem \ref{thm:hered}] 
  The hereditary polynomial $p$ factors as
\[ 
f= q_0 q_1 q_2 \dots q_s q_{s+1},\qquad q_k(0)=1,
\] 
where $q_0=1=q_{s+1}$  and, for each $1\le j\le s$,
the factor $q_j$ is an atom. Suppose, without loss of generality, that
$f\notin\pxa$.  There is an $1\le r\le s$ such that 
$q_{r+1}\cdots q_{s+1} \in \pxa$, but  $q_r q_{r+1}\cdots q_{s+1}\notin\pxa$.
By Lemma \ref{l:fac}, $q_0q_1 \cdots q_{r-1} \in \pxs$ as
$f= (q_0 q_1 \cdots q_{r-1})\,(q_r \cdots q_{s+1})$ is hereditary. 
Thus $f=a^* h b$, where $a = (q_0 q_1\cdots q_{r-1})^*, q_{r+1}\cdots q_{s+1}\in \pxa$
and $h=q_r$. By the other half of Lemma \ref{l:fac}, $q_r$ is hereditary and the proof 
is complete.
\end{proof}

\ssec{Proof of uniqueness of the factorization \protect{(\ref{eq:phq})}}
\label{sec:unique}

Proving uniqueness requires background from Cohn \cite{Coh06} which we now introduce.

\def\hq{\widehat{q}}

Let $q_1, q_2, \hq_1,\hq_2\in\pxa$ and suppose

\begin{equation}\label{e:rel}
q_1 q_2= \hq_1\hq_2.
\end{equation}
If
\[
q_1\pxa+ \hq_1 \pxa=\pxa, \qquad \pxa  q_2+\pxa  \hq_2=\pxa,
\]
then \eqref{e:rel} is called a \df{comaximal relation} \cite[Section 0.5]{Coh06}. 
If, moreover, $ q_1, q_2, \hq_1, \hq_2$ are \irrs and 
\[ 
q_1\pxa\cap \; \hq_1\pxa \ \mbox{is a principal right ideal in} \ \pxa,
\] 
then \eqref{e:rel} is called a \df{comaximal transposition} \cite[Section 3.2]{Coh06}.

Next, $q_1,\hq_2$ are \df{stably associated} \cite[Section 0.5]{Coh06} if
\[ 
 I_d\otimes \hq_2=P(I_d\otimes q_1)Q,
\] 
for some $d\in\N$ and $P,Q\in\operatorname{GL}_{d+1}(\pxa)$.

\begin{prop}[{\cite[Proposition 0.5.6]{Coh06}}]
\label{prop:cohn1} \mbox{}
$ q_1$ and $ \hq_2$ are stably associated 
if and only if 
they appear in a comaximal relation 
\eqref{e:rel} for some $ q_2, \hq_1$.
\end{prop}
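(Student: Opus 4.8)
This is \cite[Proposition 0.5.6]{Coh06}; I sketch the argument. The plan is to pass to cokernel modules over $R:=\pxa$, which is a free ideal ring (fir), being a free algebra over a field. For nonzero $a\in R$, left multiplication $\lambda_a\colon R\to R$, $r\mapsto ar$, is an injective homomorphism of right $R$-modules ($R$ is a domain), and I write $C(a):=R/aR=\operatorname{coker}\lambda_a$. The key input is the standard dictionary \cite[Section~0.5]{Coh06}: nonzero $a,b\in R$ are stably associated if and only if $C(a)\cong C(b)$ as right $R$-modules. (One direction is formal — the defining matrix identity for stable association, after stabilizing by identity blocks, restricts to an isomorphism of the cokernels, the identity blocks contributing zero; the converse uses that over a fir finitely generated projectives are free of well-defined rank.) With this, the proposition reduces to showing that $q_1$ and $\hq_2$ occur in a comaximal relation \eqref{e:rel} if and only if $C(q_1)\cong C(\hq_2)$.

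For the forward implication I would start from a comaximal relation $q_1q_2=\hq_1\hq_2=:c$. Since $\lambda_c=\lambda_{q_1}\lambda_{q_2}=\lambda_{\hq_1}\lambda_{\hq_2}$ with each factor injective, this produces two short exact sequences of right $R$-modules,
\[
0\to C(q_2)\xrightarrow{\lambda_{q_1}}C(c)\to C(q_1)\to0,\qquad
0\to C(\hq_2)\xrightarrow{\lambda_{\hq_1}}C(c)\to C(\hq_1)\to0,
\]
realizing $N:=q_1R/cR\cong C(q_2)$ and $\widehat N:=\hq_1R/cR\cong C(\hq_2)$ as submodules of $M:=C(c)$ with $M/N\cong C(q_1)$. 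Right-comaximality $q_1R+\hq_1R=R$ says precisely $N+\widehat N=M$. To obtain $N\cap\widehat N=0$ I would use the fir structure: the row $W=\begin{pmatrix}q_1&-\hq_1\end{pmatrix}\colon R^2\to R$ is surjective by right-comaximality, so $\ker W$ is a free direct summand of $R^2$ of rank one, say $\ker W=\kappa R$; since $\begin{pmatrix}q_2\\\hq_2\end{pmatrix}\in\ker W$ is a unimodular column (by left-comaximality $Rq_2+R\hq_2=R$), a rank count forces $\ker W=\begin{pmatrix}q_2\\\hq_2\end{pmatrix}R$, so every $r=q_1a=\hq_1b$ in $q_1R\cap\hq_1R$ has $\begin{pmatrix}a\\b\end{pmatrix}\in\ker W$, hence $a\in q_2R$ and $r\in cR$. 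Therefore $M=N\oplus\widehat N$, and $C(q_1)\cong M/N\cong\widehat N\cong C(\hq_2)$.

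For the converse, given an isomorphism $\psi\colon R/\hq_2R\to R/q_1R$ I would pick $\hq_1\in R$ with $\overline{\hq_1}=\psi(\overline 1)$. Then $\overline{\hq_1}$ generates $R/q_1R$, so $q_1R+\hq_1R=R$; and since $\psi$ is right $R$-linear and $\overline 1\cdot\hq_2=0$ in $R/\hq_2R$, we get $\overline{\hq_1}\,\hq_2=\psi(\overline 1\cdot\hq_2)=0$ in $R/q_1R$, i.e.\ $q_1q_2=\hq_1\hq_2$ for a unique $q_2\in R$. It remains to check left-comaximality $Rq_2+R\hq_2=R$. With $\ker W=\kappa R$ as above, $W=\begin{pmatrix}q_1&-\hq_1\end{pmatrix}$ and $\kappa=\begin{pmatrix}\kappa_1\\\kappa_2\end{pmatrix}$ unimodular, one identifies $\kappa_2R=\{r:\hq_1r\in q_1R\}=\ker\big(r\mapsto\psi(\overline r)\big)=\hq_2R$ — the middle equality because $r\mapsto\overline{\hq_1r}$ coincides with $r\mapsto\psi(\overline r)$, the last because $\psi$ is injective. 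Hence $\hq_2=\kappa_2u$ for a unit $u$, and writing $\begin{pmatrix}q_2\\\hq_2\end{pmatrix}=\kappa g$ gives $\kappa_2g=\hq_2=\kappa_2u$, so $g=u$ is a unit; then $\begin{pmatrix}q_2\\\hq_2\end{pmatrix}$ is unimodular, i.e.\ $Rq_2+R\hq_2=R$.

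The step I expect to be most delicate is the reduction in the first paragraph: pinning down the exact stabilization convention in the definition of ``stably associated'' so that it is literally equivalent to $C(q_1)\cong C(\hq_2)$ for (nonzero) elements of a fir — this is where \cite{Coh06} does real work. Beyond that, the two appeals to the fir structure of $\pxa$ (that $\ker W$ is free of the expected rank, and that a unimodular column spans a free direct summand of $R^2$) need to be stated carefully; the manipulations with the exact sequences are then routine.
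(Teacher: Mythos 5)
The paper gives no proof of its own here; Proposition~\ref{prop:cohn1} is quoted verbatim from \cite[Proposition~0.5.6]{Coh06}, so the comparison is really against Cohn. Cohn's statement holds over an arbitrary ring, and his argument is a direct matrix manipulation: from the comaximal relation $q_1q_2=\hq_1\hq_2$ one completes the unimodular row $(q_1\ \ -\hq_1)$ and column $(q_2\ \ \hq_2)^{\mathsf T}$ to a mutually inverse pair of $2\times 2$ matrices over $R$ and reads off the stable association from the resulting identity, and conversely one unpacks the block identity $P(q_1\oplus I_d)Q=\hq_2\oplus I_d$ to extract a comaximal relation. Your route is genuinely different: it works with cokernel modules $C(a)=R/aR$ and specializes to $R=\pxa$ being a fir --- you use freeness plus rank invariance to identify $\ker(q_1\ \ -\hq_1)$ as the rank-one free module $(q_2\ \ \hq_2)^{\mathsf T}R$, and you use that a one-sided inverse in $\pxa$ is a unit (true by a degree count, or because firs are weakly finite). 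Those two fir inputs, and the translation of the two comaximality conditions into the internal direct sum $R/cR=q_1R/cR\oplus\hq_1R/cR$ with $M/N\cong C(q_1)$ and $\widehat N\cong C(\hq_2)$, are correctly carried out; the converse paragraph (lifting $\psi$, checking right comaximality from surjectivity, and pinning down $\kappa_2R=\hq_2R$ from injectivity) is also sound. The one substantive gap is the black box you lean on: you reduce everything to ``stably associated $\iff C(q_1)\cong C(\hq_2)$,'' justify only the formal direction, and cite the other. In Cohn's development the hard direction of that dictionary (from a module isomorphism to a matrix identity) is not a prior fact but is proved essentially \emph{in the same breath} as Proposition~0.5.6 --- one lifts $\psi$ to $\alpha_0,\beta_0$ with $\alpha_0 q_1=\hq_2\beta_0$, checks comaximality exactly as you do, and then packages the comaximal relation into the block identity --- so invoking the dictionary as an independent input risks circularity. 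You flag this as the delicate step, which is the right instinct, but a self-contained version of your sketch would still need the ``comaximal relation $\Rightarrow$ matrix identity'' packaging, i.e., the forward half of Cohn's own argument.
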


Finally, a factorization $f=f_1\cdots f_\ell$ in $\pxa$ is {\bf complete}\index{complete factorization} \cite[Section 3.2]{Coh06} if the $f_k$ are \irrs. Two complete factorizations of $f$ are identified if their factors only differ up to scalars. Note that a noncommutative polynomial can admit distinct complete factorizations, e.g.
$$(1+x_1x_2)x_1=x_1(1+x_2x_1).$$
However, this relation is a comaximal transposition. In fact, the following holds.

\begin{prop}[{\cite[Proposition 3.2.9]{Coh06}}]
\label{prop:cohn2}
Given two complete factorizations of a polynomial, one can pass between them by a finite sequence of comaximal transpositions on adjacent pairs of \irra factors (in particular, they have the same length).
\end{prop}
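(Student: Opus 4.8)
The plan is to recast the statement within the Jordan--H\"older theory of the lattice of principal right ideals of $\pxa$, so that passing between two complete factorizations of $f$ becomes passing between two maximal chains of a modular lattice of finite length. First I would assemble the ring-theoretic input: $\pxa$ is a free ideal ring, hence in particular a $2$-fir and an atomic domain carrying a degree function. For a fixed nonzero non-unit $f$, the set $\cL_f$ of principal right ideals $J$ with $f\pxa\subseteq J\subseteq\pxa$ is then a modular sublattice of the lattice of right $\pxa$-submodules of $\pxa$ --- in a $2$-fir the sum and intersection of two principal right ideals with nonzero intersection are again principal, and every member of $\cL_f$ contains $f\pxa\ne0$ --- and $\cL_f$ has finite length, bounded by $\deg f$. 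The dictionary is: a complete factorization $f=f_1\cdots f_\ell$, taken up to the agreed scaling of factors, corresponds to the maximal chain
\[
f\pxa=f_1\cdots f_\ell\,\pxa\subsetneq\cdots\subsetneq f_1\pxa\subsetneq\pxa ,
\]
since left-multiplication by $f_1\cdots f_{j-1}$ is a lattice isomorphism onto the principal right ideals contained in $f_1\cdots f_{j-1}\pxa$, so the $j$-th step of the chain is a covering exactly when $f_j$ is an \irr; conversely every maximal chain of $\cL_f$ arises in this way.

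Next I would invoke the chain-connectivity form of the Jordan--H\"older theorem for a modular lattice of finite length: any two maximal chains are joined by a finite sequence of maximal chains, consecutive ones differing in exactly one term --- an \emph{elementary flip} replacing an intermediate $M_i$ by some $M_i'\ne M_i$ with $M_{i-1}\succ M_i'\succ M_{i+1}$. I would prove this by induction on the length, using the covering-square property of modular lattices ($a\wedge b\prec a$ implies $b\prec a\vee b$): given two maximal chains, route one of them through the join of the two bottom atoms so that it agrees with the other in the bottom step, then recurse on the interval above that common atom.

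The substantive step is to realize a single elementary flip by a comaximal transposition on the corresponding adjacent pair of \irra factors. Writing $M_{i-1}=f_1\cdots f_{i-1}\pxa$, $M_i=f_1\cdots f_i\pxa$, $M_{i+1}=f_1\cdots f_{i+1}\pxa$ and $M_i'=f_1\cdots f_{i-1}\hq_1\pxa$, the flip exhibits an \irr $\hq_1$ and forces $f_if_{i+1}=\hq_1\hq_2$ with $\hq_2$ an \irr. In the length-two interval $[M_{i+1},M_{i-1}]$ one has $M_i\vee M_i'=M_{i-1}$ and $M_i\wedge M_i'=M_{i+1}$; the join is the module sum and the meet the set-theoretic intersection, so cancelling $f_1\cdots f_{i-1}$ on the left gives $f_i\pxa+\hq_1\pxa=\pxa$ and $f_i\pxa\cap\hq_1\pxa=f_if_{i+1}\pxa$, the latter a principal right ideal automatically in a $2$-fir. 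For the left-comaximality $\pxa f_{i+1}+\pxa\hq_2=\pxa$ I would run the mirror argument in the lattice of principal \emph{left} ideals, where $\pxa f_if_{i+1}\subsetneq\pxa f_{i+1}\subsetneq\pxa$ and $\pxa\hq_1\hq_2\subsetneq\pxa\hq_2\subsetneq\pxa$ are two maximal chains of the interval $[\pxa f_if_{i+1},\pxa]$ --- maximality of the first encoding atomicity of $f_i,f_{i+1}$ and of the second atomicity of $\hq_1,\hq_2$ --- whence their joins both equal $\pxa$. Thus $f_if_{i+1}=\hq_1\hq_2$ is a comaximal transposition, and performing it alters precisely the $i$-th and $(i+1)$-th factors. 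Composing the comaximal transpositions attached to the flips produced by the Jordan--H\"older step then carries one complete factorization of $f$ to the other, and in particular the two have equal length.

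I expect the main obstacle to be the left-comaximality check: $\cL_f$ records the ``prefix'' data $f_1\cdots f_j\pxa$, whereas comaximality on the left is a statement about ``suffixes'', so one must pass to the opposite ring $\pxa^{\mathrm{op}}$ and verify carefully that atomicity of the factors is precisely what makes the two left-ideal chains maximal (a complete factorization of $\hq_1\hq_2$, read backwards, is a maximal chain of principal left ideals). Everything else --- the lattice isomorphism given by left-multiplication, the covering-square bookkeeping, and cancellation in the domain $\pxa$ --- is routine.
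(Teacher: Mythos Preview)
The paper does not give its own proof of this proposition; it is quoted verbatim from \cite[Proposition 3.2.9]{Coh06} and used as a black box in Section~\ref{sec:unique}. Your outline is essentially Cohn's argument: interpret complete factorizations as maximal chains in the modular lattice of principal right ideals between $f\pxa$ and $\pxa$ (available because $\pxa$ is a $2$-fir), invoke the Schreier--Jordan--H\"older chain-connectivity for modular lattices of finite length, and then read each elementary flip as a comaximal transposition. The one point you flag as delicate --- left comaximality $\pxa f_{i+1}+\pxa\hq_2=\pxa$ --- is handled exactly as you indicate, but you should make explicit the missing one-line observation that $\pxa f_{i+1}\ne\pxa\hq_2$: if $\hq_2=uf_{i+1}$ with $u$ a unit, then cancelling $f_{i+1}$ on the right in $f_if_{i+1}=\hq_1\hq_2$ gives $f_i=\hq_1u$, contradicting $f_i\pxa\ne\hq_1\pxa$. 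With that, the two middle terms in the length-two left-ideal interval are distinct and their join is $\pxa$, as required.
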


Let us illustrate what is meant by a {\it finite sequence of comaximal transpositions.}  To say that $q_1q_2q_3q_4$ is a complete factorization that can be transformed to a different factorization by applying comaximal transpositions on positions $(2,3)$, $(3,4)$ and $(1,2)$ (in this order)  means there exists $\hq_2,\hq_3,\widehat{\hq}_3,\widehat{\hq}_2$ such that
$$q_1q_2q_3q_4=
q_1\hq_2\hq_3q_4=
q_1\hq_2\widehat{\hq}_3\hq_4=
\hq_1\widehat{\hq}_2\widehat{\hq}_3\hq_4,$$
where
$$q_2q_3=\hq_2\hq_3,\qquad
\hq_3q_4=\widehat{\hq}_3\hq_4,\qquad
q_1\hq_2=\hq_1\widehat{\hq}_2$$
are comaximal transpositions.

\bel
\label{l:notrans}
Suppose $ \ell h =f_1 f_2$ is a comaximal relation
where $\ell\in\pxs$, $h$ is hereditary,  
$f_1, f_2 \in \px$ 
and all are normalized to equal $1$ at the origin. Then $f_1, f_2,h\in\pxs$. 

Analogously, if $h r  =f_1 f_2$ is a comaximal relation with $r\in\pxa$ and $h$ hereditary, then $f_1, f_2,h\in\pxa$.
\eel

\bep
By Proposition \ref{prop:cohn1}, $\ell$ and $f_2$ are stably associated. 
Then by the definition of stable associativity there exists $\alpha\in\C\setminus\{0\}$ such that
$$\det \ell(\Upsilon^{(n)})=\alpha^n \det f_2(\Omega^{(n)},\Upsilon^{(n)})$$
for all $n\in\N$, where $\Omega^{(n)}$ and $\Upsilon^{(n)}$ are tuples of $n\times n$ generic matrices.	By \cite[Proposition 5.11]{HKV},  $f_2\in \pxs$. But $f_1f_2= \ell h$ is hereditary, so $f_1 \in \pxs$ and consequently $h \in \pxs$.
	\eep

\begin{proof}[Proof of uniqueness in Theorem \ref{thm:hered}]
Suppose $f= p h q = \vp\vh\vq$ are two  factorizations as in Theorem \ref{thm:hered}.
Let
$$p=p_1\cdots p_k,\qquad \vp=\vp_1\cdots\vp_\vk,\qquad
q=q_1\cdots q_\ell,\qquad \vq=\vq_1\cdots\vq_\vell$$
be complete factorizations (with factors equal to 1 at the origin). Then
\begin{equation}\label{e:long}
p_1\cdots p_khq_1\cdots q_\ell \ 
=  \ \vp_1\cdots\vp_\vk \vh \vq_1\cdots\vq_\vell.
\end{equation}
and by Proposition \ref{prop:cohn2} we can pass from the left-hand side to the right-hand side of \eqref{e:long} by a series of comaximal transpositions. The heart  of the proof is that there cannot be any transposing
around the ``middle" factor $h$ unless it is trivial. 
Since $f$ and all the factors $p,q,h$ are normalized to equal $1$ at $0$, we can apply
 Lemma \ref{l:notrans} to conclude the proof:
 for if we can transpose $p_k h$, then $h\in\pxs$ and so $h=1$ since $h$ is truly hereditary. Likewise for $h q_1$. When $h$ is not trivial, comaximal transpositions can therefore only occur among the first $k-1$ factors and last $\ell-1$ factors of the left-hand side in \eqref{e:long}. However, these comaximal transpositions preserve $p_1\cdots p_k$ and $q_1\cdots q_\ell$. Thus we conclude that $p_1\cdots p_k \ =   \vp_1\cdots\vp_\vk$ and $q_1\cdots q_\ell \ =   \vq_1\cdots\vq_\vell$. Therefore $p=\vp$ and $q=\vq$, and consequently $h=\vh$.

The last part of Theorem \ref{thm:hered} is a direct consequence of the uniqueness.
\end{proof}

\appendix

\section{Modification of the theory: rational functions}
\label{sec:modify}

For the reader familiar with nc rational functions as found in \cite{Coh06,KVV09}, we point out that
Theorem \ref{thm:main2} extends to matrix noncommutative rational functions in a straightforward way. Assume $\rr\in\rxx^{\de\times\de}$ is regular 
 at the origin (that is, $0$ is in the domain of $\rr$)
  and $\rr(0)=I$. Then we define $\cK_\rr=\bigcup_n\cK_\rr(n)$, where $\cK_\rr(n)$ is the closure of the connected component of\looseness=-1
\[\left\{(X,X^*)\in\mat{n}^{2g} \colon \rr \text{ is regular at } (X,X^*) \text{ and }\det \rr(X,X^*)\neq 0\right\}\]
containing the origin.

Now let $I+c^*L^{-1}\bb$ be a minimal FM realization for $\rr\oplus\rr^{-1}\in\rx^{2\de\times2\de}$. Using Remark \ref{r:facts}\ref{it:rf3} we observe that $\cZ_L$ is precisely the set of all $(X,X^*)$ for which either $\rr$ is not defined at $(X,X^*)$ or $\rr$ is regular at $(X,X^*)$ and $\det\rr(X,X^*)=0$. By comparing this observation with the definition of $\cK_\rr$, we see that 
\beq\label{eq:rL}
\cK_\rr=\cK_L.
\eeq 
Now we apply the proof of Theorem \ref{thm:main2} to $L$.

 Likewise, 
from \eqref{eq:rL} we deduce that 
 Corollary \ref{cor:algo12}
 holds for rational functions $\rr$.
This leads to improvements and strengthening of
recent positivity results for noncommutative
rational functions \cite{KPV17,Pas18}.
For instance, a rational function $\rr$
is positive definite on the interior
of $\cD_{L}$ if and only if
$\rr(0)\succ0$ and 
$\tL$ is invertible on 
$\interior\cD_{L}$, where
$\tL$ is the minimal pencil in an FM realization of $\rr\oplus\rr^{-1}$.
The latter condition can be efficiently checked
by the algorithm of Subsection \ref{ssec:algo2}.

In \cite{Pas18}, Pascoe
gives a Positivstellensatz certifying 
when a noncommutative rational function
$\rr$ that is defined on $\cD_L$, is
positive semidefinite on $\cD_L$.
For bounded $\cD_L$ our algorithms provide
means of verifying whether $\rr$ is
defined on $\cD_L$. Let $\tL$ be
the minimal pencil in an FM realization of $\rr$.
Then $\tL$ is invertible on
$\cD_L$ if and only if
there is $\ve>0$ such that
$\tL \tL^*-\ve$ is invertible on $\interior \cD_L$, and this is something that can be checked
with a sequence of SDPs (cf.~Subsection \ref{ssec:algo2}).

We conclude with a variant of Theorem \ref{thm:main} for rational functions.
\df{McMillan degree} (\cite{KVV09}) of a rational function is the size of the linear pencil in
its minimal FM realization.
Lemma \ref{l:McMin} below asserts that, given $L$ a minimal hermitian monic pencil $L$,
there exists a hermitian $\ss\in\rxx$ such that $\cK_\ss = \cD_L$. We say that a hermitian $\rr\in \rxx$ is
\df{minimal (McMillan) degree  defining for $\cD_L$} if $\cK_{\rr}=\cD_L$ 
and the McMillan degree of $\rr$ is smallest amongst all hermitian $\ss$ such that 
$\cK_{\ss}=\cD_L$. 

\begin{prop}\label{prop:main}
Let $\rr=\rr^*\in\rxx$ be regular at the origin and $\rr(0)=1$. Suppose
that $\cK_\rr$ is a free spectrahedron
$\cD_L$ for an \irrL hermitian monic pencil $L$. 
If $\rr$ is minimal McMillan degree defining for $\cD_L$, 
then either $\rr$ or $\rr^{-1}$ is
concave or convex with the pencil   in its minimal FM realization being equal to $L$.
\end{prop}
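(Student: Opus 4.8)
The plan is to transplant the proof of Theorem~\ref{thm:main} (Propositions~\ref{p:DfDL} and~\ref{p:fL}) to the rational setting: first I would show that, after possibly replacing $\rr$ by $\rr^{-1}$, the pencil in a minimal FM realization of $\rr$ is similar to the given $L$; then I would read off the shape of $\rr$ from Lemma~\ref{l:exp}. Unlike the polynomial case no joint-nilpotency input is needed: once the realization is pinned down, the telescoping identity $A\mydot x+A^*\mydot x^*=I-L$ finishes the job.

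First, locating $L$ among the minimal pencils. Let $\rr=1+c^*\tL^{-1}\bb$ be a minimal FM realization and let $\tL'$ be the pencil in the induced minimal realization of $\rr^{-1}$, so $\tL$ and $\tL'$ have the same size $m$, the McMillan degree of $\rr$ (Remark~\ref{r:facts}\ref{it:rf5}). By the discussion in Appendix~\ref{sec:modify}, $\partial\cD_L=\partial\cK_\rr$ lies in the locus where $\rr$ is singular or $\det\rr$ vanishes, which is contained in $\cZ_{\tL}\cup\cZ_{\tL'}$ (if $\rr$ and $\rr^{-1}$ are both regular at a point, then $\rr$ is invertible there). Since $L$ is minimal hermitian monic, $\partial\cD_L$ is Zariski dense in $\cZ_L$ (Proposition~\ref{p:pd-min}), so $\cZ_L\subseteq\cZ_{\tL}\cup\cZ_{\tL'}$; and $\cZ_L$ is an irreducible hypersurface for large $n$ (Lemma~\ref{l:irr}). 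Replacing $\rr$ by $\rr^{-1}$ if necessary (this preserves all hypotheses, since $\cK_{\rr^{-1}}=\cK_\rr$ and the McMillan degree is unchanged), we may assume $\cZ_L\subseteq\cZ_{\tL}$. Putting $\tL$ in the block upper triangular normal form of~\eqref{e:triang}, $\cZ_L$ (being irreducible) lies in $\cZ_{\tL^{i_0}}$ for some non-identity, hence \irrL, block $\tL^{i_0}$; comparison of the two irreducible hypersurfaces then gives $\cZ_L=\cZ_{\tL^{i_0}}$, so by \cite[Theorem~3.11]{KV} the pencil $\tL^{i_0}$ is \irrL, has the same size $d$ as $L$, and is similar to $L$. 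In particular $m\ge d$. On the other hand Lemma~\ref{l:McMin} provides a hermitian $\ss\in\rxx$ with $\cK_\ss=\cD_L$ and McMillan degree at most $d$; by minimality of $\rr$, $m\le d$. Hence $m=d$, there are no extra (identity) blocks, and $\tL=\tL^{i_0}$ is similar to $L$. Conjugating, we obtain a minimal realization $\rr=1+c^*L^{-1}\bb$ with $L$ the given \irrL hermitian monic pencil.

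Next, reading off $\rr$. Write $L=I-A\mydot x-A^*\mydot x^*$ and $\bb=\sum_j\widecheck{b}_jx_j+\sum_j\widehat{b}_jx_j^*$. Since $\rr$ is hermitian and non-constant (it cannot be the constant $1$, as $\cD_L$ is proper), Lemma~\ref{l:exp} supplies $\lambda\in\R\setminus\{0\}$ with $\widecheck{b}_j=\lambda A_jc$, $\widehat{b}_j=\lambda A_j^*c$, that is, $\bb=\lambda(A\mydot x+A^*\mydot x^*)c=\lambda(I-L)c$. Hence
\[
\rr=1+c^*L^{-1}\bigl(\lambda(I-L)c\bigr)=(1-\lambda\|c\|^2)+\lambda\,c^*L^{-1}c .
\]
As $L$ is affine and $s\mapsto s^{-1}$ is operator convex on $(0,\infty)$, the maps $X\mapsto L(X,X^*)^{-1}$ and $X\mapsto c^*L(X,X^*)^{-1}c$ are matrix convex on $\{L\succ0\}=\interior\cD_L$. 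Therefore $\rr$ is matrix convex on $\interior\cD_L$ when $\lambda>0$ and matrix concave when $\lambda<0$. Undoing the possible $\rr\leftrightarrow\rr^{-1}$ swap from the previous step yields the assertion. (When $\rr$ is a polynomial one is automatically forced into the swapped case, and the conclusion refines Theorem~\ref{thm:main}.)

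I expect the identification of the pencil to be the main obstacle: pinning $\cZ_L$ onto the singularity locus of $\rr$ or of $\rr^{-1}$ and then upgrading this to ``$\tL$ is similar to $L$'' requires combining the Zariski-density statement of Proposition~\ref{p:pd-min}, the irreducibility of $\cZ_L$, the normal form~\eqref{e:triang}, and---crucially---the McMillan-degree bound from Lemma~\ref{l:McMin} to exclude spurious identity blocks in $\tL$. Once the pencil is in hand, Lemma~\ref{l:exp} together with $A\mydot x+A^*\mydot x^*=I-L$ reduces the rest to a one-line computation.
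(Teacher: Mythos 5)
Your proof is correct and follows essentially the same route as the paper's. The paper likewise passes to a minimal FM realization $\rr=1+c^*\tL^{-1}\bb$ (and the induced minimal realization of $\rr^{-1}$ via Remark~\ref{r:facts}\ref{it:rf5}), locates $\cZ_L$ inside $\cZ_{\tL}\cup\cZ_{\tL_\times}$ via Proposition~\ref{p:pd-min} and irreducibility, uses Lemma~\ref{l:McMin} to bound the McMillan degree by $d$ and hence force $\tL$ to be similar to $L$, and then applies Lemma~\ref{l:exp} to read off $\rr=1+\lambda(c^*L^{-1}c-c^*c)$. You spell out a few steps (the block-triangular argument locating $L$ among the blocks of $\tL$, and the operator-convexity of $s\mapsto s^{-1}$ underlying the final concavity/convexity claim) that the paper compresses into single sentences, but these are the same ideas.
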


\begin{lemma}
\label{l:McMin}
 Suppose $L$ is an \irrL hermitian monic pencil of size $d$ and $0\ne \hc\in \C^d$
 is of norm $<1$.
 Setting $\widehat\bb=Ac\mydot x+A^*c\mydot x^*$ and
$\widehat\rr=1+\widehat c^* L^{-1} \widehat\bb,$
\[
\cK_{\widehat{\rr}} = \cD_L,
\]
 $\widehat{\rr}^{-1}$ is defined on $\interior \cD_L$ and $\widehat{\rr}^*=\widehat{\rr}$.
\end{lemma}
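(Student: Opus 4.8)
The plan is to exploit the special shape of $\widehat\bb$. Since $\widehat\bb=A\widehat c\mydot x+A^*\widehat c\mydot x^*=(A\mydot x+A^*\mydot x^*)\widehat c=(I-L)\widehat c$, the realization collapses, as rational functions, to
\[
\widehat\rr=1+\widehat c^*L^{-1}(I-L)\widehat c=\big(1-\widehat c^*\widehat c\big)+\widehat c^*L^{-1}\widehat c .
\]
From here hermiticity and normalization are immediate: $\widehat\rr(0)=1$ because $L(0)=I$, and $\widehat\rr^*=\widehat\rr$ because $L^*=L$ (so $L^{-1}$ is hermitian) while $1-\widehat c^*\widehat c\in\R$. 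Thus the only substantive claims left are $\cK_{\widehat\rr}=\cD_L$ and that $\widehat\rr^{-1}$ is defined on $\interior\cD_L$.

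Next I would verify that the FM realization $\widehat\rr=1+\widehat c^*L^{-1}\widehat\bb$ is \emph{minimal}. Observability is clear: since $L$ is \irrL its coefficients generate $\mat d$, so $\{M\widehat c:M\in\mat d\}=\C^d$ as $\widehat c\neq0$. For controllability the relevant ``$b$''-vectors are the $A_j\widehat c$ and $A_j^*\widehat c$; they cannot all vanish, since that would force $\widehat c\in\ker A=\{0\}$ by Lemma \ref{l:irr}, and for any nonzero $v$ one has $\{Mv:M\in\mat d\}=\C^d$. Hence the realization is observable and controllable, so minimal by Remark \ref{r:facts}\ref{it:rf1}; in particular $\widehat\rr$ is non-constant and, by Remark \ref{r:facts}\ref{it:rf3}, the domain of regularity of $\widehat\rr$ is exactly the complement of $\cZ_L$.

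For positivity I would work on $\interior\cD_L$, which equals $\{(X,X^*):L(X,X^*)\succ0\}$ because $\partial\cD_L=\cZ_L\cap\cD_L$ is the topological boundary of $\cD_L$. There $L(X,X^*)^{-1}\succ0$, hence $(\widehat c\otimes I)^*L(X,X^*)^{-1}(\widehat c\otimes I)\succ0$ since $\widehat c\otimes I$ is injective, and combined with $1-\|\widehat c\|^2>0$ this gives $\widehat\rr(X,X^*)\succ0$. Consequently $\widehat\rr$ is invertible on $\interior\cD_L\subseteq\operatorname{dom}\widehat\rr$, so $\widehat\rr^{-1}$ is defined there. The inclusion $\cD_L\subseteq\cK_{\widehat\rr}$ then follows because $\interior\cD_L$ is open, connected, contains $0$, lies in $\operatorname{dom}\widehat\rr$, and $\widehat\rr$ is nonvanishing on it; one takes closures and uses that $\cD_L$ is the closure of its interior, levelwise.

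The reverse inclusion is where the rational-function bookkeeping matters, and I expect it to be the main technical point. Given $(X,X^*)$ in the connected component of $0$ defining $\cK_{\widehat\rr}$, pick a path $\gamma$ from $0$ to $(X,X^*)$ along which $\widehat\rr$ is regular and nonzero. By minimality, ``$\widehat\rr$ regular at $\gamma(t)$'' means precisely $\gamma(t)\notin\cZ_L$, so $t\mapsto L(\gamma(t))$ is a continuous path of \emph{invertible} hermitian matrices starting at $L(0)=I\succ0$; hence it stays positive definite, giving $L(X,X^*)\succ0$ and $(X,X^*)\in\cD_L$. Since $\cD_L$ is closed, $\cK_{\widehat\rr}\subseteq\cD_L$, and equality follows. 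The delicate step throughout is identifying the domain of regularity of $\widehat\rr$ with the complement of $\cZ_L$ (which is exactly what minimality of the realization buys us); everything else is routine linear algebra and a connectedness argument.
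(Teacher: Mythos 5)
Your proof is correct, and it takes a genuinely cleaner route than the paper's. The paper establishes $\cK_{\widehat\rr}=\cD_L$ by passing to the realization $\widehat\rr^{-1}=1-\widehat c^*L_\times^{-1}\widehat\bb$ with $L_\times=L+\widehat\bb\,\widehat c^*$, identifying $\cK_{\widehat\rr}=\cK_{L\oplus L_\times}$, and then showing $L_\times$ is invertible on $\interior\cD_L$ via the matrix identity
\[
(I-\widehat c\widehat c^*)(L+\widehat \bb\widehat c^*)
=(I-\widehat c\widehat c^*)\widehat c\widehat c^*+(I-\widehat c\widehat c^*)L(I-\widehat c\widehat c^*),
\]
whose right side is positive definite when $L\succ0$ because $\|\widehat c\|<1$ makes $I-\widehat c\widehat c^*$ positive definite. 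Hermiticity the paper gets by invoking ``the converse of Lemma~\ref{l:exp}.'' You instead make the single observation $\widehat\bb=(I-L)\widehat c$ and collapse $\widehat\rr$ to $(1-\|\widehat c\|^2)+\widehat c^*L^{-1}\widehat c$, from which hermiticity is immediate, positive definiteness on $\interior\cD_L$ follows from $L^{-1}\succ0$ plus $1-\|\widehat c\|^2>0$, and invertibility of $\widehat\rr$ there gives that $\widehat\rr^{-1}$ is defined there. Both proofs use minimality of the realization (verified identically in each, via irreducibility of $L$ and $\widehat c\ne0$) together with Remark~\ref{r:facts}\ref{it:rf3} to identify the domain of $\widehat\rr$ with the complement of $\cZ_L$. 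Your path/connectedness argument for $\cK_{\widehat\rr}\subseteq\cD_L$ plays the same role as the paper's reduction to $\cK_{L\oplus L_\times}$, but avoids introducing $L_\times$ altogether. The paper's version has the small advantage of exhibiting the pencil $L_\times$ that controls the domain of $\widehat\rr^{-1}$ explicitly, which is convenient in the surrounding appendix; your version is more self-contained and makes the positivity of $\widehat\rr$ on $\interior\cD_L$ manifest rather than indirect.
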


\begin{proof}
Since the converse of Lemma \ref{l:exp} evidently holds, $\rr^*=\rr$.
By Remark \ref{r:facts}\ref{it:rf5} we have $\widehat\rr^{-1}=1-\widehat c^* L_\times^{-1} \widehat\bb$, where $L_\times=L+\widehat \bb\widehat c^*$. Since $L$ is \irrL and $\widehat c\ne 0$ and $\widehat{\bb} \ne 0$, 
the realization $\widehat\rr=1+\widehat c^* L^{-1} \widehat\bb$ is observable and controllable, and thus minimal by Remark \ref{r:facts}\ref{it:rf1}. Consequently $\widehat\rr^{-1}=1-\widehat c^* L_\times^{-1} \widehat\bb$ is also minimal.
The pencil $L_\times$ is invertible on $\interior \cD_L$ because
\[
(I-\widehat c\widehat c^*)(L+\widehat \bb\widehat c^*)
=(I-\widehat c\widehat c^*)\widehat c\widehat c^*+(I-\widehat c\widehat c^*)L(I-\widehat c\widehat c^*).
\]
By the definition of $\cK_{\widehat\rr}$ we have
$$\cK_{\widehat\rr}=\cK_{L\oplus L_\times},$$
so invertibility of $L_\times$ on $\interior\cD_L$ implies
$$\cK_{\widehat\rr}=\cD_L.$$ 
Furthermore, the domain of $\widehat{\rr}^{-1}$ is the complement of $\cZ_{L_\times}$ by 
Remark \ref{r:facts}\ref{it:rf3}, so $\widehat{\rr}^{-1}$ is defined on $\interior \cD_L$.
\end{proof}

\begin{proof}[Proof of Proposition~\ref{prop:main}]
Let $L=I-A\mydot x-A^* \mydot x^*$ be of size $d$.
 Let $\rr=1+c^* \tL^{-1} \bb$ be a minimal realization.
Hence $\rr^{-1} = 1 - c^* \tLx^{-1} \bb$, where $\tLx$ is the pencil
appearing in Remark \ref{r:facts}\ref{it:rf5}, is a minimal realization
for $\rr^{-1}$.  Since $\cK_\rr=\cD_L$, the topological boundary of $\cD_L$ is contained in
\[
\{(X,Y)\colon \rr \text{ is undefined at } (X,Y)\}
\cup
\{(X,Y)\colon \rr^{-1} \text{ is undefined at } (X,Y)\}
 = \cZ_{\tL} \cup \cZ_{\tLx}.
\] 
Since $L$ is an \irrL hermitian monic pencil, it is minimal. Thus,
by Proposition \ref{p:pd-min}, $\cZ_L\subseteq \cZ_{\tL}\cup \cZ_{\tLx}$.
Since $L$ is \irrL, either $\cZ_L\subseteq \cZ_{\tL}$ or
 $\cZ_L\subseteq \cZ_{\tLx}$. Without loss of
generality suppose $\cZ_L\subseteq\cZ_{\tL}$ (otherwise replace $\rr$ by $\rr^{-1}$). Since $L$ is \irrL,
up to similarity (change of basis), 
$\tL$ has the form \eqref{e:triang}, where one of the blocks equals $L$.
On the other hand, by Lemma \ref{l:McMin}, the size of $\tL$ is no larger
than the size of $L$.  Hence $\tL$ is similar to $L$ and we may assume,
by modifying $c,\bb$ and $A$ appropriately, that $\tL=L$. Therefore,
as $L$ is an \irrL hermitian  monic  pencil, 
$$\rr
 =1+\lambda c^* L^{-1}(Ac\mydot x+A^*c\mydot x^*)
 =1+\lambda(c^*L^{-1}c-c^*c)
 $$
 for some $\lambda\in\R\setminus\{0\}$ by Lemma \ref{l:exp}. 
 Since $L$ is monic and hermitian, $\rr$ is concave or convex (depending on the sign of $\lambda$).
\end{proof}

\end{document}